\newcommand{\Rm}{\mathbb{R}}
\newcommand{\RR}{\mathbb{R}}
\newcommand{\mC}{\ensuremath{\mathcal{C}}}
\newcommand{\mF}{\ensuremath{\mathcal{F}}}
\newcommand{\mP}{\ensuremath{\mathcal{P}}}
\newcommand{\Zm}{\ensuremath{\mathbb{Z}}}
\newcommand{\mD}{\ensuremath{\mathcal{D}}}
\newcommand{\mJ}{\ensuremath{\mathcal{J}}}
\newcommand{\mE}{\ensuremath{\mathcal{E}}}
\newtheorem{lem}{Lemma}[section]
\newtheorem{thm}{Theorem}
\newtheorem{cor}[lem]{Corollary}
\newtheorem{prop}[lem]{Proposition}
\newtheorem{defn}[lem]{Definition}
\newtheorem{rmk}[lem]{Remark}
\def\lto{\longrightarrow}
\def\lmto{\longmapsto}
\def\leq{\leqslant}
\def\geq{\geqslant}
\numberwithin{equation}{section}
\begin{document}

\title[Cauchy and uniform temporal functions]{Cauchy and uniform temporal functions of globally hyperbolic cone fields.}


\author{Patrick Bernard}
\address{Universit\'e  Paris-Dauphine,\\
PSL Research University,\\
\'Ecole Normale Sup\'erieure,\\
DMA (UMR CNRS 8553)\\
45, rue d'Ulm\\
75230 Paris Cedex 05,
France}
\curraddr{}
\email{patrick.bernard@ens.fr}
\thanks{}

\author{Stefan Suhr}
\address{Fakult\"at f\"ur Mathematik,\\
Ruhr-Universit\"at Bochum,\\
Universit\"atsstra\ss e 150\\
44780 Bochum,
Germany}
\curraddr{}
\email{stefan.suhr@rub.de}
\thanks{This research has been supported by the SFB/TRR 191 ``Symplectic Structures in Geometry, Algebra and Dynamics'', funded by the Deutsche Forschungsgemeinschaft.}

\subjclass[2010]{53C50, 37B25}

\date{}

\dedicatory{}

\commby{}

\begin{abstract}
We study a class of time functions called uniform temporal functions in the general context
 of globally hyperbolic closed cone fields. We prove some existence results for uniform temporal functions,
 and  establish  the density of uniform temporal functions in Cauchy causal functions.
\end{abstract}

\maketitle

\section{Introduction}
Let us first recall some definitions from \cite{BS1}, see also \cite{mi17}.
We say that $C\subset E$ is a {\it convex cone} in the vector space $E$ if
it is a (possibly empty)  convex set $C$ contained in an open half space  which is positively homogeneous:
$tx\in C$ if $t> 0$ and $x\in C$.
A  {\it cone field} on a smooth manifold $M$ is a subset $\mC\subset TM$ such that $\mC(x):= \mC\cap T_xM$ is a convex cone for each 
$x\in M$. We say that $\mC$ is a {\it closed cone field} if it is a cone field such that $\mC\cup T^0M$
is closed in $TM$, where $T^0M$ is the zero section of the tangent bundle. Our definitions  imply that cone fields
are disjoint from $T^0M$.
We say that $\mE$ is an {\it open cone field} if it is a cone field open in $TM$.
The main example of closed cone fields  we have in mind is the set of future pointing non spacelike vectors associated to
{ a continuous time} oriented Lorentzian metric. Our setting is more general, but our main results are new even in this 
important case.
The main example of open cone field is the set of future pointing timelike vectors associated to a continuous time
oriented Lorentzian metric.

The {\it domain of $\mC$} is the set of points $x$ such that $\mC(x)$ is not empty. It is a closed subset of $M$.
The points where $\mC(x)$ is empty are called {\it degenerate points}.

For a closed cone field $\mC$,  the curve  $\gamma:I\lto M$ is said to be  {\it $\mC$-causal}  (or just {\it causal}) if
it is locally Lipschitz and   if the inclusion $\dot \gamma(t) \in \mC(\gamma(t))$ holds for almost all $t\in I$. It is convenient to declare  that the trivial curve $\gamma:\{0\}\lmto x$ is causal if and only if $x$ belongs to
the domain of $\mC$.
The  causal  future  $\mJ_{\mC}^{+}(x)$ of $x$ is the set of points $y\in M$ such that there exists a 
  causal curve  $\gamma:[0,T]\lto M, T\geq 0$
satisfying $\gamma(0)=x$ and $\gamma(T)=y$.
The strictly causal  future  $\mJ_{\mC}^{++}(x)$ of $x$ is the set of points $y\in M$ such that there exists a 
non trivial  causal curve  $\gamma:[0,T]\lto M, T>0$
satisfying $\gamma(0)=x$ and $\gamma(T)=y$.
We thus have 
$\mJ^+_{\mC}(x)=\mJ_{\mC}^{++}(x)\cup \{x\}$ if $x\in \mD$, and $\mJ_{\mC}^{++}(x)\subset \mJ^+(x)$ are empty otherwise.
The causal past $\mJ_{\mC}^-(x)$ is the set of points $x'\in M$ such that $x\in \mJ^{+}_{\mC}(x')$.
More generally, for each subset $A\subset M$, we denote by $\mJ^{\pm}_{\mC}(A):= \cup_{x\in A}\mJ^{\pm}_{\mC} (x)$ the causal future and past
of $A$. 
We have the inclusion $\mJ_{\mC}^+(y)\subset \mJ_{\mC}^+(x)$ if 
$y \in \mJ_{\mC}^+(x)$.

The closed cone field $\mC$ is said to be \textit{locally solvable}  if, for each point $x$ in the domain of $\mC$,
there exists a  nonconstant causal curve $\gamma :]-1, 1[\lto M$ satisfying $\gamma(0)=x$
(if this property is satisfied for the point $x$, we say that $\mC$ is solvable at $x$). A closed cone field is 
always locally solvable in the interior of its domain. In particular, closed cone fields without
degenerate points are locally solvable.
We will say that the closed cone field $\mC$ has {\it smooth domain} if its domain $\mD$ is a 
submanifold with boundary. This case includes the  setting of \cite{Sa13}. Note that the hypothesis of timelike boundary  in \cite{Sa13} implies local solvability.

We say that the closed cone field  $\mC$ is  {\it causal} if $x\not\in \mJ^{++}(x)$ for each $x\in M$.
We say that $\mC$ is {\it hyperbolic} if it is causal and if moreover $\mJ^+(K)\cap \mJ^-(K')$ is compact for each compact sets  $K,K'\subset M$. We say that $\mC$ is {\it globally hyperbolic} if it  is hyperbolic and 
locally solvable (this definition is   more general than the one in \cite{BS1} as it 
allows the possibility of degenerate points).

In the case where $\mC$ contains a non-degenerate open cone field (for example in the case of a continuous time oriented Lorentzian metric), the above definition 
 is equivalent to the usual (since \cite{besa5}) condition that $\mC$ is  causal and that  the diamonds $\mJ^+(x) \cap \mJ^-(x')$ are all compact, as was proved in \cite{BS1}.

A { continuous} function $\tau: M\lto \Rm$ is said {\it causal} if $\tau \circ \gamma$ is non decreasing for each causal curve $\gamma$;
a {\it time function} if $\tau \circ \gamma$ is increasing for each causal curve $\gamma$; {\it temporal} if it is smooth and if  $d\tau_x\cdot v>0$ for each $(x,v)\in \mC$.

A smooth function is causal if and only $d\tau_x \cdot v \geq 0$ for each $(x,v)\in \mC$.
A temporal function is a time function, and a time function is causal.
Being causal, a time function, or temporal depends only on the values of $\tau$
in a neighborhood of the domain.

 A causal curve $\gamma\colon I\lto M$ is said {\it complete} if $I$ is an open interval and neither $\lim_{t\uparrow \sup I}\gamma(t)$ nor $\lim_{t\downarrow \inf I} \gamma(t)$ exist. A causal curve $\gamma\colon I \to M$, where $I$ is a 
 (not necessarily open) time interval,  is said {\it inextensible} if it cannot be extended to a larger time interval.
 Each causal curve can be extended to an inextensible causal curve. Complete causal curves are inextensible,
 but the converse is true only in the locally solvable case. More precisely :
 
 \begin{lem}
 	The closed cone field $\mC$ is locally solvable if and only if all inextensible causal curves are complete.
 \end{lem}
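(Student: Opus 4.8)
The statement is an equivalence, and the substantive half is ``locally solvable $\Rightarrow$ every inextensible causal curve is complete''; the other half I would obtain by contraposition. For the forward implication, start from an inextensible causal curve $\gamma\colon I\lto M$ and prove separately that $I$ is open and that $\gamma$ has no limit at either endpoint. Suppose first that $b:=\sup I$ belongs to $I$. Since $\dot\gamma(t)\in\mC(\gamma(t))$ for a.e.\ $t$, the curve meets the domain $\mD$ on a full-measure, hence dense, subset of $I$; as $\mD$ is closed and $\gamma$ continuous, $\gamma(I)\subset\mD$, so in particular $\gamma(b)\in\mD$. Local solvability then furnishes a nonconstant causal curve $\sigma\colon\,]-1,1[\,\lto M$ with $\sigma(0)=\gamma(b)$, and concatenating $\gamma$ with $t\lmto\sigma(t-b)$ over $[b,b+1/2]$ produces a causal curve on $I\cup[b,b+1/2]\supsetneq I$, contradicting inextensibility; the mirror argument at $\inf I$ shows $I=\,]a,b[\,$ is open. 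Next, suppose $\gamma(t)\to p$ as $t\uparrow b$. Then $p\in\mD$ as above, and near $p$ there exist a chart, a covector $\xi$ and a constant $\delta>0$ with $\xi\cdot v\geq\delta|v|$ for every $(y,v)\in\mC$ with $y$ close to $p$: such $\xi$ exists because $\mC(p)\cup\{0\}$ is closed and $\mC(p)$ lies in an open half-space, and the bound persists on a neighborhood by a routine limit argument using closedness of $\mC\cup T^0M$. Integrating $|\dot\gamma|\leq\delta^{-1}\frac{d}{dt}(\xi\cdot\gamma)$ shows that $\gamma$ has finite length as $t\uparrow b$, hence extends continuously to $b$ with value $p$; being of finite length it is (after a reparametrization near $b$, which is immaterial for inextensibility) a causal curve defined at $b$, and local solvability at $p$ extends it strictly past $b$, once more contradicting inextensibility. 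The symmetric statement at $a$ then gives that $\gamma$ is complete.

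For the converse I would argue by contraposition: assuming $\mC$ is not locally solvable, pick $x\in\mD$ at which $\mC$ is not solvable and build an inextensible causal curve that fails to be complete. The key preliminary remark is that $x$ cannot admit \emph{both} a nonconstant causal curve $\gamma^+\colon[0,\e)\lto M$ with $\gamma^+(0)=x$ and a nonconstant causal curve $\gamma^-\colon(-\e,0]\lto M$ with $\gamma^-(0)=x$: their concatenation $\sigma\colon(-\e,\e)\lto M$ is nonconstant and causal with $\sigma(0)=x$, and the linear reparametrization $t\lmto\sigma(\e t/2)$ turns it into a nonconstant causal curve on $]-1,1[\,$, contradicting nonsolvability at $x$. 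Hence one ``side'' at $x$ is blocked; say no nonconstant causal curve issues from $x$ towards the future. Extend the trivial causal curve $\{0\}\lmto x$ to an inextensible causal curve $\gamma\colon I\lto M$ with $\gamma(0)=x$. If $I$ contained some $s_1>0$ then $\gamma|_{[0,s_1]}$ — which is nonconstant, since causal curves, being tangent to $\mC$ almost everywhere, are nowhere locally constant — would, after an affine reparametrization, be a nonconstant causal curve issuing from $x$ towards the future, which is excluded. Therefore $\sup I=0\in I$, so $I$ is not an open interval and $\gamma$ is not complete, which is what the contraposition requires.

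The one genuinely delicate point is excluding a limit of $\gamma$ at an endpoint in the forward direction; everything else reduces to concatenating and reparametrizing causal curves. The content there is that a causal curve approaching a point $p$ accumulates only finite length near that end — this is exactly where the local ``temporal'' covector $\xi$ at $p$ enters — so that the curve truly extends to a causal curve through $p\in\mD$ and local solvability applies. It is worth noting that for this one should read ``extension'' and ``completeness'' up to orientation-preserving reparametrization, so that a possible blow-up of the Lipschitz constant at the endpoint causes no trouble.
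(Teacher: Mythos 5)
Your proof is correct, and the forward implication is in essence the paper's own argument (extend at any endpoint where a limit exists), only with the details the paper suppresses filled in: you verify that the limit point lies in $\mD$ via closedness of $\mD$ and the a.e.\ tangency condition, and you use a local covector $\xi$ with $\xi\cdot v\geq\delta|v|$ on $\mC$ near the limit point to get finite length and hence a genuine (reparametrized) Lipschitz extension through that point. That last point is worth having: read literally, a curve whose Lipschitz constant blows up at a finite endpoint, or a curve on $]a,\infty[$ converging at $+\infty$, is ``inextensible'' but not complete, so the lemma only holds with extensions and completeness understood up to orientation-preserving reparametrization, which you state explicitly and the paper leaves implicit. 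Where you genuinely diverge is the converse. The paper argues directly: extend the trivial curve $\{0\}\lmto x$ to an inextensible, hence by hypothesis complete, causal curve on an open interval $]a,b[$ and reparametrize it onto $]-1,1[$ fixing $0\lmto x$; this immediately produces the solving curve. You instead contrapose and insert the observation that non-solvability at $x$ forces one entire temporal side of $x$ to be blocked (since a future and a past nonconstant causal arc would concatenate into a solving curve), so that any inextensible extension of the trivial curve has $0$ as an endpoint of its (non-open) domain. Both routes work; the paper's is shorter because completeness of the extension hands you the open interval and nonconstancy for free, while yours localizes exactly where completeness fails, at the cost of the extra concatenation lemma. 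No gaps.
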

 
 \begin{proof}
 If $\mC$ is locally solvable, then a causal curve $\gamma :]a,b[\lto M$ which has a limit 
 at $b$ (resp. $a$) can be extended to a causal curve defined on $]a, b+1[$ (resp. $]a-1, b[$).
 Conversely, assume that each inextensible causal curve is complete. Then for each $x$ in the domain 
 of $\mC$, the trivial causal curve $\{0\}\lmto x$ can be extended to a complete curve defined on some open interval $]a,b[$. By reparametrization, we can assume that $a=-1, b=1$.
\end{proof}

The causal function $\tau$ is said {\it Cauchy} if $\tau \circ \gamma: I\lto \Rm$ is onto for each 
 inextensible causal curve $\gamma:I\lto M$. The existence of a Cauchy causal function implies that
 inextensible causal curves are complete, hence that $\mC$ is locally solvable.
 
It is known in the classical setting   that a spacetime is globally hyperbolic if and only if it admits
a Cauchy temporal function, as was proved in \cite{besa3}, see also  \cite{fasi, mi15, musa, mi17}. 
In order to study this question in the present setting, it is useful to consider the notion of 
uniform temporal functions, which was introduced in  \cite{BS1} 
where they are called steep temporal functions:

\begin{defn}
Given a { Riemannian} metric $g$, the temporal function $\tau$ is said $g$-uniform (or uniform with respect to $g$) if  $d\tau_x\cdot v \geq |v|^g_x := \sqrt{g_x(v,v)}$
for each $(x,v)\in \mC$.

The function $\tau$ is said completely uniform if it is uniform with respect to some complete metric.
\end{defn}

The motivation for this definition comes from :

\begin{lem}
	If $\mC$ is a locally solvable cone field, and $\tau$ a completely uniform temporal function for $\mC$, then $\tau$ is  a Cauchy temporal function for $\mC$.
\end{lem}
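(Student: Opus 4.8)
The plan is to argue by contradiction, reducing the failure of the Cauchy property to the existence of an inextensible causal curve with a finite-length tail, and then to use completeness of the reference metric to force that tail to converge — which contradicts completeness of the curve, the latter being available because $\mC$ is locally solvable (previous lemma).

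So fix a complete Riemannian metric $g$ with respect to which $\tau$ is uniform, and suppose $\gamma\colon I\lto M$ is an inextensible causal curve for which $\tau\circ\gamma$ is not onto $\Rm$. Since $\mC$ is locally solvable, the previous lemma (local solvability is equivalent to all inextensible causal curves being complete) shows that $\gamma$ is complete; in particular $I=\,]a,b[\,$ is an open interval and $\gamma$ is nontrivial. Because $\tau$ is a time function, $\tau\circ\gamma$ is increasing, so its image is an interval strictly contained in $\Rm$, hence bounded above or below; assume $c:=\sup_{t\in I}\tau(\gamma(t))<+\infty$, the other case being symmetric. As $\gamma$ is locally Lipschitz and $\tau$ is $C^1$, the function $\tau\circ\gamma$ is locally absolutely continuous, and for $s<t$ in $I$,
\[
\tau(\gamma(t))-\tau(\gamma(s))=\int_s^t d\tau_{\gamma(r)}\cdot\dot\gamma(r)\,dr\geq \int_s^t |\dot\gamma(r)|^g_{\gamma(r)}\,dr=\mathrm{length}_g\big(\gamma|_{[s,t]}\big),
\]
where the inequality uses $\dot\gamma(r)\in\mC(\gamma(r))$ for almost every $r$ together with $g$-uniformity. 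Letting $t\uparrow b$, the $g$-length of $\gamma|_{[s,b[}$ is at most $c-\tau(\gamma(s))<+\infty$.

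It remains to conclude that $\gamma$ has a limit as $t\uparrow b$. For the distance $d_g$ induced by $g$ one has $d_g(\gamma(t_1),\gamma(t_2))\leq \mathrm{length}_g\big(\gamma|_{[t_1,t_2]}\big)$ for $s\leq t_1<t_2<b$, and the right-hand side tends to $0$ as $t_1,t_2\uparrow b$ since the total tail length is finite; thus $t\mapsto\gamma(t)$ is a Cauchy net for $d_g$. By the Hopf--Rinow theorem the complete Riemannian manifold $(M,g)$ is complete as a metric space, so $\lim_{t\uparrow b}\gamma(t)$ exists, contradicting completeness of $\gamma$. (When instead $\tau\circ\gamma$ is bounded below, the same argument run towards $a$ produces $\lim_{t\downarrow a}\gamma(t)$.) Hence $\tau\circ\gamma$ is onto for every inextensible causal curve, that is, $\tau$ is Cauchy; and $\tau$ is temporal by hypothesis.

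The argument is essentially routine; the only genuinely substantial ingredient is the passage from ``finite length of a tail of $\gamma$'' to ``convergence of $\gamma$ at the endpoint'', which is precisely where metric completeness of $g$ enters — and this is why it is complete uniformity, rather than mere uniformity, that is required, and also why local solvability cannot be dropped (without it an inextensible curve may converge at an endpoint and still fail surjectivity of $\tau\circ\gamma$).
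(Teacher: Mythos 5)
Your proof is correct and is essentially the argument the paper compresses into a single sentence: local solvability makes inextensible causal curves complete, complete curves have infinite $g$-length in each tail for a complete metric $g$ (your contradiction via finite tail length, the Cauchy estimate $d_g(\gamma(t_1),\gamma(t_2))\leq \mathrm{length}_g(\gamma|_{[t_1,t_2]})$ and Hopf--Rinow is exactly the proof of this fact), and the uniformity inequality $\tau(\gamma(t))-\tau(\gamma(s))\geq \mathrm{length}_g(\gamma|_{[s,t]})$ then forces $\tau\circ\gamma$ to be onto. No changes needed.
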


\begin{proof}
 Since inextensible curves have infinite length with respect to a complete metric and $|\tau\circ \gamma(t)-\tau\circ \gamma(s)|$ is bounded from the below 
by the length of $\gamma|_{[s,t]}$, the claim is immediate.
\end{proof}

We give an example in Section \ref{sec-ex} showing that the converse of this lemma does not hold: not all Cauchy temporal functions are completely uniform. 

We proved in \cite{BS1} that a  hyperbolic cone field admits uniform temporal functions with respect to any metric, hence completely uniform temporal functions.
As a consequence, globally hyperbolic cone fields admit Cauchy temporal functions.
Conversely, the existence of a Cauchy causal function implies global hyperbolicity, this follows from  
Proposition \ref{prop-ch} below.

Even more, the existence of a completely uniform temporal function  implies that the hyperbolic cone field $\mC$ admits a hyperbolic enlargement (a hyperbolic closed cone field $\mC'$
which contains $\mC$ in its interior).
 Indeed, if $g$ is a complete metric and $\tau$ is $g$-uniform, then 
 $\tau$ has no critical point on the domain $\mD$ of $\mC$. Then the function $\tau$ 
 is  $g/4$-uniform (hence completely uniform) for the enlargement 
 $$\mC':= \{(x,v):   d\tau_x\cdot v\geq  |v|^g_x/2, v\neq 0 \}.$$
 
This extends \cite{BM11} and \cite[Theorem 1.2]{fasi}. The cone field $\mC'$
constructed above is not necessarily locally solvable, but it can be made so by the following lemma provided that $\mC$ is locally solvable. This  proves that each  globally hyperbolic cone field has a  globally hyperbolic enlargement.

\begin{lem}\label{lem-solvable}
	Each locally solvable closed cone field $\mC$ admits a locally solvable enlargement.
	More precisely, if  $\mC_1$ is an enlargement of $\mC$, then there exists a closed neighborhood $F$ of $\mD$ such that 
  $\mC':=\mC_{1|F}$ is locally solvable. 
	
	In the case where $\mC$ has a smooth boundary, the closed neighborhood  $F$ can be assumed to have smooth boundary too.
\end{lem}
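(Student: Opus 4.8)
The plan is to work locally near the boundary $\partial\mD$ of the domain and exploit the fact that $\mC_1$, being an enlargement, contains $\mC$ in its interior, hence is non-degenerate on a neighborhood of $\mD$. The obstruction to local solvability can only occur at degenerate points of $\mC$; away from $\partial\mD$ the cone field $\mC$ is already locally solvable by the remark in the introduction (a closed cone field is locally solvable in the interior of its domain), and since $\mC_1$ is an enlargement it is non-degenerate and open-cone-containing at all points of $\mD$, so the only place where $\mC'=\mC_{1|F}$ could fail to be solvable is near $\partial F$. Thus the whole point is to choose $F$ so that its boundary is "timelike" with respect to $\mC_1$, i.e. at each point $x\in\partial F$ the cone $\mC_1(x)$ is not contained in (the closure of) either side of $T_x\partial F$, so that one can start a nonconstant causal curve at $x$ that immediately enters the interior of $F$.

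First I would fix, near each point of the compact-in-charts pieces of $\mD$, a local defining function: since $\mC_1$ is non-degenerate on a neighborhood $U$ of $\mD$ and $\mC_1\cup T^0M$ is closed, one can (shrinking $U$) choose a smooth $1$-form $\alpha$ on $U$ with $\alpha_x\cdot v>0$ for all $(x,v)\in\mC_1$, $x\in U$ — this is a standard averaging/partition-of-unity construction for closed cone fields on a neighborhood of a closed set, and is exactly the kind of statement proved in \cite{BS1}. Because $\alpha$ is positive on the (closed) cone $\mC_1$, it is bounded below by a positive multiple of $|v|^g$ on the unit sphere bundle over any compact set, so locally $\mC_1(x)$ lies in a fixed open half-space that varies continuously. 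Next I would pick a smooth function $h:U\to\Rm$ with $dh$ close to $\alpha$ — concretely, cover $\mD$ by countably many charts and glue with a partition of unity, or simply use that $\alpha$ can be taken exact after shrinking — and set $F:=\{x\in U: h(x)\le c\}$ for an appropriate regular sublevel, arranged by Sard so that $\mD\subset\{h<c\}$ and $c$ is a regular value of $h$. Then $\partial F=\{h=c\}$ is a smooth hypersurface, and at each $x\in\partial F$ the vector space $T_x\partial F=\ker dh_x$ does not contain $\mC_1(x)$ (since $dh_x\approx\alpha_x>0$ there), and moreover no vector of $\mC_1(x)$ points strictly out of $F$, i.e. $dh_x\cdot v>0$ forces the causal curve to move into $\{h<c\}$ when run backwards in time; running forwards, any causal curve started at $x$ with initial velocity in $\mC_1(x)$ leaves $F$, but we only need a nonconstant causal curve through $x$ defined on a two-sided interval, and the backward-causal direction keeps us inside $F$.

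The key step — and the main obstacle — is showing that $\mC'=\mC_{1|F}$ is \emph{locally solvable at boundary points} $x\in\partial F$, i.e. producing a nonconstant $\mC'$-causal curve $\gamma:]-1,1[\to M$ with $\gamma(0)=x$ and $\dot\gamma\in\mC_1(\gamma)$, $\gamma\subset F$. Here I would use that $\mC_1$ is non-degenerate at $x$ and apply local solvability of $\mC_1$ itself (closed cone fields are solvable in the interior of their domain, and $x$ is interior to $\mD(\mC_1)=U$) to get a nonconstant $\mC_1$-causal curve $\gamma$ through $x$; then I restrict to the portion lying in $F$. The subtlety is that $\gamma$ might only touch $F$ at $x$. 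To rule this out, I use the transversality arranged above: since $dh_x\cdot\dot\gamma(0^-)\le$ (something controlled) — more carefully, since $\alpha$ (hence $dh$, chosen close) is \emph{strictly} positive on the closed cone $\mC_1$ and $\mC_1$ is closed, there is a uniform lower bound $dh_x\cdot v\ge\epsilon|v|^g$ on a neighborhood; hence along $\gamma$, $h\circ\gamma$ is strictly increasing, so the backward half $\gamma|_{]-\delta,0]}$ satisfies $h\circ\gamma<c$, i.e. lies in the interior of $F$. Reparametrizing this backward half to be defined on $]-1,1[$ (extending it as a constant is not allowed, so instead one observes the backward causal curve is itself nonconstant and can be taken on a two-sided interval by running a short $\mC_1$-causal arc forward from a point just below $x$) gives the required curve. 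For the smooth-boundary addendum, one simply notes that $F=\{h\le c\}$ with $c$ a regular value already has smooth boundary; if $\mC$ additionally has smooth boundary $\partial\mD$, one arranges $h$ to extend a boundary defining function and takes $F$ a tubular-neighborhood sublevel, so that $F$ is a smooth manifold with boundary and $\mD\subset F$ with $\partial\mD$ either in the interior of $F$ or smoothly matched, which is routine. I expect the careful bookkeeping of "the backward-causal half stays strictly inside $F$, uniformly" to be where all the real work lies, and where the hypothesis that $\mC_1$ is an \emph{enlargement} (so $\mC$ sits in its \emph{interior}, giving the strict positivity of $\alpha$ on all of $\mC_1$, not just $\mC$) is essential.
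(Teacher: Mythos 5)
There is a genuine gap, and it is fatal to the approach rather than a fixable detail. You correctly identify at the outset that the point is to make $\partial F$ ``timelike'' for $\mC_1$, but the set you then construct has exactly the opposite property. If $dh$ is (close to) a $1$-form that is strictly positive on $\mC_1$, then $h$ is a temporal-type function and the hypersurface $\{h=c\}$ is \emph{spacelike}: the whole cone $\mC_1(x)$ lies on the side $dh_x>0$. At a point $x\in\partial F=\{h=c\}$ with $F=\{h\le c\}$, the restriction $\mC_{1|F}$ is then \emph{not} locally solvable. Indeed, a causal curve for $\mC_{1|F}$ defined on $]-1,1[$ with $\gamma(0)=x$ must satisfy $\dot\gamma(t)\in\mC_1(\gamma(t))$ for almost every $t$ in the \emph{whole} interval; since $0\notin\mC_1(\gamma(t))$ (cone fields are disjoint from the zero section), the curve cannot be constant on $[0,1[$, so $h\circ\gamma(t)=h(x)+\int_0^t dh\cdot\dot\gamma>c$ for $t>0$ and $\gamma(t)\notin F$. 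Your proposed remedy --- keeping only the backward half, or prepending a causal arc arriving at $x$ from below --- does not produce a curve with $x$ in the interior of its parameter interval and image in $F$, which is what the definition of local solvability demands. A second, independent problem is that a sublevel set $\{h\le c\}$ of a function increasing along $\mC_1$-causal curves is in general not a neighborhood of $\mD$ at all: by local solvability every point of $\mD$ lies on a future-inextensible (hence complete) causal curve in $\mD$, and $h$ typically tends to $+\infty$ along it (take $\mD$ a timelike strip in the Minkowski plane), so no sublevel contains $\mD$.

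The correct construction must ensure that each boundary point of $F$ admits a causal curve that continues \emph{forward} inside $F$. The paper does this in two ways. In general, it takes $F$ to be the union of the images of all $\mC_1$-causal curves of $g$-length less than one with \emph{both endpoints in $\mD$}: then every $z\in F$ is either an interior point of such a curve (which lies entirely in $F$ by construction, so it solves $\mC_{1|F}$ at $z$) or an endpoint, in which case $z\in\mD$ and the local solvability of $\mC$ itself applies; closedness of $F$ follows from the limit curve lemma. In the smooth-boundary case it instead takes two vector fields $X^{\pm}$ contained in $\mC_1$, pointing respectively strictly into and strictly out of $\mD$ along $\partial\mD$, and a tubular neighborhood of $\partial\mD$ whose leaves are tangent to a convex combination $fX^-+(1-f)X^+$, which again lies in $\mC_1$ by convexity of the cones; the flow of this field stays in the leaves, hence in $F$. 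That is the precise sense in which the boundary is made ``timelike,'' and it is where the convexity of $\mC_1(x)$ and the two-sided (in/out) vector fields are used --- ingredients absent from a sublevel-set construction.
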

	
\begin{proof}
We endow $M$ with a complete Riemannian metric $g$. 
We define $F$ as the union of the images 
of all $\mC_1$-causal curves of lengh less than one and with endpoints in $\mD$. It is easy
to see that $F$ contains $\mD$ in its interior.
It follows from the limit curve lemma, see \cite[Section 2.3]{BS1} that $F$ is closed.
Let us prove that $\mC_{1|F}$ is locally solvable.
By the definition of $F$, each point $z\in F$ is of the form $z=\gamma(t)$ for a $\mC_1$-causal curve $\gamma:[-1,1]\lto M$ having its endpoints in $\mD$.
If $t\in ]-1,1[$, then the curve $\gamma$ solves $\mC_{1|F}$ at points $z$.
If $t\in \{-1,1\}$, then $z\in \mD$, and then $\mC$ is solvable at $z$, hence so is $\mC_{1|F}$.

We make a different construction in the case with smooth boundary.
There exist  smooth non singular vector fields $X^{\pm}$ which are defined in a neighborhood 
of $\mD$, are contained in $\mC_1$, and points strictly to the interior  (resp. the exterior) of $\mD$ at points of the boundary. 
Then, there exists a neighborhood $F$ of $\mD$ with smooth boundary such that $X^+$
still points inside and $X^-$ outside. This implies that $\mC_{1|F}$ is locally solvable. 
\end{proof}

It is not hard to prove (but not very useful) that each temporal function (not necessarily Cauchy) is uniform with respect to some (not necessarily complete) Riemannian metric. As we just saw, in order to be completely uniform a temporal function has to be Cauchy. We will however see in Section \ref{sec-ex} that a Cauchy temporal function is not necessarily completely uniform.

Our goal in the present paper is to clarify the differences between the notions of completely uniform 
temporal functions and of Cauchy temporal functions. In order to do so, we will prove some existence results of completely uniform temporal functions which mimic corresponding results for Cauchy temporal functions. We will deduce that each Cauchy temporal function can be uniformly approximated by 
completely uniform temporal functions.

The subset  $H\subset M$ is said to be {\it acausal} if   $H\cap \mJ^{++}(x)$ is empty for each $x\in H$;
{\it spacelike}  if there exists an open neighborhood $\tilde M$ of the domain $\mD$ such that 
$H\cap \tilde M$  is a smooth  hypersurface of $\tilde M$, and  if $T_xH$ is disjoint from $\mC(x)$ for each $x\in H$;
{\it spatial}  if it is  spacelike, intersects each inextensible causal curve at one
and only one point, and if moreover $\bar H \cap \mD=H\cap \mD$.

 Each level set of a Cauchy temporal function is spatial.
Our first results is a converse which   extends  \cite[Theorem 1.2]{besa4} to the more general setting of closed cone fields and to uniform temporal functions (instead of 
Cauchy temporal functions) :

\begin{thm}\label{thm-one}
	Let $H$ be a spatial subset and $g$ a Riemannian metric.
	Then there exists a $g$-uniform temporal function which is null on $H$.
\end{thm}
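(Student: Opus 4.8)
The plan is to construct $\tau$ in the form $T-\lambda$, where $T$ is a uniform temporal function supplied by \cite{BS1} and $\lambda$ is a correction term which equals $T$ on $H$ and whose derivative stays small on $\mC$. As a preliminary reduction, note that whether a smooth function is temporal, or $g$-uniform, depends only on its restriction to a neighbourhood of the domain $\mD$; so I would first work on an open neighbourhood $\tilde M$ of $\mD$ on which $H\cap\tilde M$ is a smooth embedded hypersurface (this is part of $H$ being spacelike), co-oriented so that all the cones $\mC(x)$ with $x\in H\cap\mD$ point to the same side, and then extend the function obtained to all of $M$ at the end by multiplying by a cut-off equal to $1$ near $\mD$ and supported in $\tilde M$ (declaring the result $0$ off that support) --- this affects neither the defining inequalities, which only involve points of $\mD$, nor the vanishing on $H$. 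Fix also a tubular neighbourhood $V\cong H\times\,]-2\varepsilon,2\varepsilon[$ of $H$ in $\tilde M$, with $H=H\times\{0\}$ and the coordinate field $\partial_t$ chosen $\mC$-causal along $H\cap\mD$. Since $\mC(x)$ avoids $T_xH$ and is connected, along $H\cap\mD$ every $v\in\mC(x)$ has positive $\partial_t$-component $v_\perp$, with $v_\perp\geq c(x)\,|v|^g_x$ and with its $T_xH$-component bounded by $C(x)\,|v|^g_x$, for suitable continuous $c>0$ and $C$.

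Next the local model. Let $T$ be a temporal function uniform with respect to $4g$; it exists by \cite{BS1}, since a globally hyperbolic cone field is in particular hyperbolic, and it satisfies $dT_x\cdot v\geq |v|^{4g}_x=2\,|v|^g_x$ on $\mC$. Put $\phi:=T|_H$ and define $\lambda$ on $V$ by $\lambda(y,t):=\phi(y)+t\,\mu(y)$, where $\mu$ is a continuous (then smoothed) \emph{negative} function chosen large enough in absolute value that $C(y)+\mu(y)\,c(y)\leq\tfrac14$ for every $y$. For $x=(y,0)\in H\cap\mD$ and $v=v_H+v_\perp\partial_t\in\mC(x)$ with $v_H\in T_xH$,
\[
d\lambda_x\cdot v \;=\; d\phi_y\cdot v_H + \mu(y)\,v_\perp \;\leq\; C(y)\,|v|^g_x + \mu(y)\,c(y)\,|v|^g_x \;\leq\; \tfrac14\,|v|^g_x ,
\]
the key point being that $v_\perp>0$, so a sufficiently negative normal derivative $\mu$ dominates the tangential term (whose sign is not controlled). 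By continuity and local compactness, after shrinking $V$ we obtain $d\lambda_x\cdot v\leq\tfrac12\,|v|^g_x$ for all $(x,v)\in\mC$ with $x\in V$. Hence $\sigma:=T-\lambda$ satisfies $d\sigma_x\cdot v\geq 2\,|v|^g_x-\tfrac12\,|v|^g_x\geq|v|^g_x$ on $\mC|_V$: it is a $g$-uniform temporal function on $V$, and $\sigma|_H=0$ because $\lambda|_H=\phi=T|_H$.

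It remains to produce a $g$-uniform temporal function $\tau$ on $\tilde M$ that agrees with $\sigma$ on a smaller neighbourhood of $H$; then $\tau|_H=0$, and the cut-off extension above finishes the proof. One cannot simply glue $\sigma$ to $T$ by a single partition of unity: writing $\tau=\rho\,\sigma+(1-\rho)\,T$, the differential contains the term $(\sigma-T)\,d\rho=-\lambda\,d\rho$, and along a possibly non-compact $H$ neither $|\lambda|$ nor $|d\rho|^g$ is controlled, so the inequality $d\tau_x\cdot v\geq|v|^g_x$ may be destroyed. The remedy --- and the technical heart of the proof --- is the pasting technique of \cite{BS1}, in the lineage of Geroch and of Bernal--Sánchez \cite{besa4}: exhaust $\mD$ by compact sets $K_1\subset K_2\subset\cdots$ and replace $\sigma$ by $T$ over successive shells, arranging at each step that the two functions in play are $C^0$-close on the (compact) transition region --- by adding a constant to one and composing the other with a steep convex increasing function, operations which preserve $g$-uniformity --- so that the cross terms there become arbitrarily small and an honest smooth interpolation retains the uniform inequality. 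I expect this globalization, i.e. running the pasting of \cite{BS1} relative to the locally prescribed model $\sigma$ along a non-compact hypersurface, to be the only genuine obstacle; once the correct normal derivative $\mu$ in the local model has been identified, the rest is bookkeeping.
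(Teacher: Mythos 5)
Your local construction near $H$ is essentially sound: subtracting from a $4g$-uniform temporal function $T$ the correction $\lambda(y,t)=T(y,0)+t\mu(y)$, with $\mu$ sufficiently negative, does produce a function $\sigma$ that vanishes on $H$ and is $g$-uniform on a tubular neighbourhood $V$ of $H$ of variable width; this plays the role of the paper's Proposition \ref{prop-near}. The gap is in the globalization, which you correctly identify as the hard point but then dispose of by appeal to a ``pasting technique'' that does not do what you need. The transition zone between $\{\tau=\sigma\}$ and $\{\tau=T\}$ is a slab of bounded width flanking the (in general non-compact) hypersurface $H$, and on it the discrepancy $\sigma-T=-\lambda\approx -T|_H$ is unbounded. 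Post-composing $T$ with an increasing function of derivative $\geq 1$ (which you need in order to preserve uniformity) cannot compress its range, and adding a single constant cannot cancel an unbounded function; choosing a different constant on each compact shell of an exhaustion merely recreates the same unbounded-discrepancy problem on the interfaces between consecutive shells. Worse, the shells of an exhaustion of $\mD$ meet $H$, and any modification of $\sigma$ there would destroy $\sigma|_H=0$. So the assertion that ``an honest smooth interpolation retains the uniform inequality'' is unjustified, and I do not see how to justify it along these lines.

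The paper avoids interpolation altogether: it writes $\tau=u+v$ as a sum of two functions that are \emph{causal everywhere} and \emph{both vanish on $H$}, with $u$ uniform on a neighbourhood $U$ of $H$ and locally constant (equal to $\pm1$) outside it, and $v$ uniform outside $U$. Since $d(u+v)_x\cdot\xi\geq\max(du_x\cdot\xi,\,dv_x\cdot\xi)$ for $\xi\in\mC(x)$ when both summands are causal, no cross terms ever appear and the sum is uniform everywhere. The idea you are missing is the construction of the far-field piece $v$ (Proposition \ref{prop-far}): one first shows that $H$ is a Cauchy subset (Corollary \ref{cor-sc}, itself a consequence of the near-$H$ step), and then builds $v=v^++v^-$, where $v^+$ is a locally finite sum $\sum_n a_nv_n$ of causal functions, $v_n$ vanishing on $\mJ^-(H)\cup K_n$ for an exhaustion $(K_n)$ of $\mJ^+(H)$ by compact sets saturated under $\mJ^-(\cdot)\cap\mJ^+(H)$, with $a_n$ chosen so that $a_nv_n$ is uniform on the compact annulus not already covered by the previous terms. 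If you wish to keep your $\sigma=T-\lambda$ as the near-$H$ piece, you must still replace $T$ away from $H$ by a function of this additive form vanishing on $H$; the subtractive ansatz alone does not globalize.
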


A proof is given in Section \ref{sec-one}, it is different from the one  in \cite{besa4}.
Note that Theorem \ref{thm-one} corresponds to case (B) of \cite[Theorem 1.2]{besa4}.
The other cases, when $H$ may be not spacelike or not acausal, are also extended in the present paper,
 see Corollary \ref{cor-cc} below.
 Theorem \ref{thm-one} implies the stability of spatial subsets:
 
 \begin{cor}\label{cor-ce}
 	If $H$ is a spatial subset for the cone field $\mC$, then there exists
 	a globally hyperbolic enlargement $\mC'$ of $\mC$ such that $H$ is  spatial for $\mC'$.
 \end{cor}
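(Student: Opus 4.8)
The plan is to bootstrap the corollary from Theorem \ref{thm-one} together with the enlargement machinery already developed in the excerpt. First I would apply Theorem \ref{thm-one} to the spatial subset $H$ and an auxiliary \emph{complete} Riemannian metric $g$ on $M$ (every manifold admits one), obtaining a $g$-uniform temporal function $\tau$ which vanishes on $H$. By the discussion following the definition of uniform temporal functions, $\tau$ is then completely uniform, so the enlargement construction recalled in the excerpt applies verbatim: since $g$ is complete and $\tau$ is $g$-uniform, $\tau$ has no critical point on $\mD$, and
\[
\mC_1 := \{(x,v) : d\tau_x\cdot v \geq |v|^g_x/2,\ v\neq 0\}
\]
is a hyperbolic closed cone field containing $\mC$ in its interior, for which $\tau$ is $g/4$-uniform.

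Next I would pass from $\mC_1$ to a \emph{locally solvable} enlargement using Lemma \ref{lem-solvable}: since $\mC$ is globally hyperbolic it is locally solvable, so there is a closed neighborhood $F$ of $\mD$ with $\mC' := \mC_{1|F}$ locally solvable. The cone field $\mC'$ still contains $\mC$ in its interior (restriction to the closed neighborhood $F$ of $\mD$ does not remove any cone over points of $\mD$, and near $\mD$ the inclusion $\mC\subset\operatorname{int}\mC_1$ is unaffected), and $\tau$ restricted to a neighborhood of $\mD$ is still $g/4$-uniform for $\mC'$, hence completely uniform. By the lemma establishing that a locally solvable cone field with a completely uniform temporal function has a Cauchy temporal function, and by the fact that the existence of a Cauchy causal function forces global hyperbolicity (Proposition \ref{prop-ch}, cited but used only as a black box), $\mC'$ is globally hyperbolic.

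It remains to check that $H$ is spatial for $\mC'$. Here I would argue that $H$ is a level set — namely $\tau^{-1}(0)\cap\tilde M$ for a suitable neighborhood $\tilde M$ of $\mD$ — of a Cauchy temporal function of $\mC'$, and invoke the already-quoted fact that each level set of a Cauchy temporal function is spatial; alternatively one checks the three defining properties directly. Spacelikeness of $H$ for $\mC'$ needs $T_xH$ disjoint from $\mC'(x)$: since $H$ is a regular level set of $\tau$, $T_xH = \ker d\tau_x$, while every $v\in\mC'(x)$ has $d\tau_x\cdot v \geq |v|^g_x/2 > 0$, so indeed $T_xH\cap\mC'(x)=\emptyset$. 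The acausality-type condition (meeting each inextensible $\mC'$-causal curve exactly once) and $\bar H\cap\mD = H\cap\mD$ follow because $\tau$ is a Cauchy temporal function for $\mC'$: it is strictly monotone and onto along inextensible $\mC'$-causal curves, forcing exactly one crossing of $\{\tau=0\}$, and the closedness condition is inherited since it only involves $\mD$, where $H$ was already spatial for the smaller $\mC$.

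The main obstacle I anticipate is the bookkeeping around \emph{restriction to $F$}: one must make sure that shrinking $\mC_1$ to $\mC_{1|F}$ does not spoil either the inclusion $\mC\subset\operatorname{int}\mC'$ or the fact that $H\subset F$ (so that $H$ is still ``inside'' the new cone field and $\tau$ is still uniform along curves through $H$). This is genuinely a point to verify rather than a triviality, because $F$ from Lemma \ref{lem-solvable} is only guaranteed to be a closed neighborhood of $\mD$, and $H$ as a spatial subset satisfies $\bar H\cap\mD = H\cap\mD$ but need not lie in an arbitrarily small neighborhood of $\mD$; one may need to intersect $F$ with a neighborhood adapted to $H$, using that $H$ is closed in the relevant ambient neighborhood, before applying the lemma — or simply observe that the construction of $F$ in the proof of Lemma \ref{lem-solvable} can be carried out so that $F\supset H$, since $H$ is built from (short pieces of) $\mC$-causal curves that are $\mC_1$-causal.
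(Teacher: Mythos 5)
Your overall route is the one the paper intends (the corollary is derived from Theorem \ref{thm-one} via the enlargement $\mC_1=\{(x,v): d\tau_x\cdot v\geq |v|^g_x/2,\ v\neq 0\}$ for a complete $g$, followed by Lemma \ref{lem-solvable}), but there is one genuine gap at the point where you check that $H$ meets every inextensible $\mC'$-causal curve. Theorem \ref{thm-one} only gives the inclusion $H\subset \tau^{-1}(0)$, and your argument (Cauchyness of $\tau$ for $\mC'$) only produces, for each inextensible $\mC'$-causal curve, a unique crossing of the level set $\tau^{-1}(0)$; a priori that crossing could occur at a point of $\tau^{-1}(0)\setminus H$ lying in the domain $F$ of $\mC'$. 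The identification $H\cap \tilde M=\tau^{-1}(0)\cap\tilde M$ on some neighborhood $\tilde M$ of $\mD$, which you assert, is true but requires the argument of Corollary \ref{cor-sc}: if $x_n\in\tau^{-1}(0)\setminus H$ converged to $x\in\mD$, then $\tau(x)=0$ and $x\in\mD$ force $x\in H$ (extend the trivial curve at $x$ to an inextensible $\mC$-causal curve; it meets $H$ at a point where $\tau$ vanishes, and strict monotonicity of $\tau$ along the curve identifies that point with $x$); since $H$ is near $x$ a smooth hypersurface contained in the hypersurface $\tau^{-1}(0)$, it is relatively open in it there, contradicting $x_n\notin H$. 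Hence $\overline{\tau^{-1}(0)\setminus H}$ is disjoint from $\mD$, and $\tilde M$ can be taken to be the intersection of its complement with the neighborhood on which $H$ is a smooth hypersurface; on $\tilde M$ the set $H\cap\tilde M=\tau^{-1}(0)\cap\tilde M$ is moreover closed. Without this step the ``at least once'' part of spatiality for $\mC'$ is unproved.

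The bookkeeping you flag at the end is real, but you aim at the wrong condition: nothing requires $H\subset F$, since spatiality of $H$ for $\mC'$ only involves $H$ on a neighborhood of the domain $F$ and the $\mC'$-causal curves, which live in $F$. What you do need is $F\subset\tilde M$ with $\tilde M$ as above; this is what guarantees that the crossing point found above lies in $H$, that $H$ is a smooth hypersurface of a neighborhood of $F$, and that $\bar H\cap F=H\cap F$. To arrange it, first restrict $\mC_1$ to a closed neighborhood $G\subset\tilde M$ of $\mD$ (this is still an enlargement of $\mC$, since $\mC$ lives over $\mD\subset\mathrm{int}\,G$) and then run the construction of Lemma \ref{lem-solvable} for $\mC_{1|G}$, so that the resulting $F$ is automatically contained in $G$. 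Intersecting an already-built $F$ with another neighborhood, as you propose, risks destroying local solvability at the new boundary, and the alternative you offer (forcing $F\supset H$) is neither needed nor generally achievable. With these two repairs the proof is complete and agrees with the paper's intended argument.
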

 
 The stability of spatial subsets implies the stability of Cauchy temporal functions: 
 
 \begin{cor}\label{cor-scf}
 	If $\tau$ is a Cauchy temporal function for $\mC$, then there exists a globally hyperbolic  enlargement $\mC'$ of $\mC$
 	such that $\tau$ is a Cauchy temporal function for $\mC'$.
 \end{cor}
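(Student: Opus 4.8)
The plan is to deduce the statement from Corollary~\ref{cor-ce} applied to the level sets of $\tau$. I begin with a formal criterion: \emph{suppose $\mC'$ is a locally solvable enlargement of $\mC$ for which $\tau$ is temporal and which satisfies, for every $n\in\ZZ$, $\mC'\subseteq\mC_n$ for some globally hyperbolic enlargement $\mC_n$ of $\mC$ with $\tau^{-1}(n)$ spatial for $\mC_n$; then $\tau$ is a Cauchy temporal function for $\mC'$.} To see this, let $\gamma\colon I\to M$ be an inextensible $\mC'$-causal curve. Since $\mC'$ is locally solvable, $I$ is open and $\gamma$ has no limit at either endpoint; in particular $\gamma$, viewed as a $\mC_n$-causal curve (which it is, as $\mC'\subseteq\mC_n$), cannot be extended and is thus an inextensible $\mC_n$-causal curve. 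As $\tau^{-1}(n)$ is spatial for $\mC_n$, the curve $\gamma$ meets $\tau^{-1}(n)$, so $n$ lies in the range of $\tau\circ\gamma$. Because $\tau$ is temporal for $\mC'$, this range is an interval, and since it contains $\ZZ$ it equals $\Rm$; hence $\tau\circ\gamma$ is onto and $\tau$ is Cauchy temporal for $\mC'$.

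So it suffices to produce such a $\mC'$. First, $\tau$ stays temporal under enlargement: the map $(x,v)\mapsto d\tau_x\cdot v$ is positive and lower semicontinuous on the fibrewise compact set $\{(x,v):v\in\mC(x),\ |v|^g_x=1\}$, so there is a continuous $\delta\colon M\to(0,\infty)$ with $d\tau_x\cdot v>\delta(x)$ on that set; intersecting a globally hyperbolic enlargement of $\mC$ (which exists by the discussion preceding Lemma~\ref{lem-solvable}) with $\{(x,v)\neq 0:\ d\tau_x\cdot v\geq\tfrac{1}{2}\delta(x)|v|^g_x\}$ and applying Lemma~\ref{lem-solvable}, we obtain a globally hyperbolic enlargement $\mC_*$ of $\mC$ for which $\tau$ is temporal, and we henceforth work inside $\mC_*$. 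Now each $\tau^{-1}(n)$ is spatial for $\mC$ (level sets of a Cauchy temporal function are spatial), and I would prove a \emph{localized} form of Corollary~\ref{cor-ce}: for each $n$ there is a globally hyperbolic enlargement $\mC_n$ of $\mC$ with $\mC\subset\mC_n\subseteq\mC_*$ and with $\tau^{-1}(n)$ spatial for $\mC_n$, such that $\mC_n$ coincides with $\mC_*$ outside the slab $\tau^{-1}\big((n-\tfrac{1}{2},\,n+\tfrac{1}{2})\big)$. Since the slabs $\tau^{-1}\big((n-\tfrac{1}{2},\,n+\tfrac{1}{2})\big)$, $n\in\ZZ$, form a locally finite cover of a neighbourhood of $\mD$, at every point all but finitely many of the $\mC_n$ agree with $\mC_*$; hence $\mC':=\bigcap_{n\in\ZZ}\mC_n$ is, locally, a finite intersection of globally hyperbolic enlargements of $\mC$, so it is a closed cone field containing $\mC$ in its interior, and after a last application of Lemma~\ref{lem-solvable} we may assume it locally solvable. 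Then $\mC'$ admits the Cauchy causal function $\tau$, hence is globally hyperbolic by Proposition~\ref{prop-ch}; $\tau$ is temporal for it; and $\mC'\subseteq\mC_n$ for every $n$. By the criterion above, $\tau$ is a Cauchy temporal function for $\mC'$.

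The real obstacle is the localized form of Corollary~\ref{cor-ce}, i.e.\ making $\tau^{-1}(n)$ spatial for an enlargement that is standard (equal to $\mC_*$) away from the level $n$. Being \emph{spatial} is a global condition, so after replacing $\mC_*$ by a thinner cone field supported in the slab $\tau^{-1}\big((n-\tfrac{1}{2},\,n+\tfrac{1}{2})\big)$ one must still check that every inextensible causal curve crosses $\tau^{-1}(n)$ exactly once, i.e.\ that none gets trapped in a half-space $\tau^{-1}(\le c)$ or $\tau^{-1}(\ge c)$ on one side of level $n$. The tool for this is a compactness property inherited from $\mC$: for every compact $K\subset M$ and reals $a<b$, the slab-future $\mJ^+_\mC(K)\cap\tau^{-1}([a,b])$ is compact — it lies in $\mJ^+_\mC(K)\cap\mJ^-_\mC(K')$ with $K'=\tau^{-1}(b)\cap\mJ^+_\mC(K)$ compact by the limit curve lemma, and one uses hyperbolicity of $\mC$ — together with its past analogue. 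This compactness is what allows the thinning inside the slab to be carried out with quantitative control on each compact piece and the control to be inherited by $\mC_n$; establishing this carefully is the bulk of the work.
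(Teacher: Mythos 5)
Your overall architecture is sound and close in spirit to the paper's: produce, for each integer level, a globally hyperbolic enlargement for which that level is spatial and $\tau$ is temporal, combine these into a single enlargement $\mC'$, and conclude because $\tau\circ\gamma$ must hit every integer along every inextensible $\mC'$-causal curve. Your preliminary steps (the crossing criterion, and the construction of $\mC_*$ keeping $\tau$ temporal) are correct, modulo reordering the last lines so that the criterion is applied before global hyperbolicity is asserted. But there is a genuine gap exactly where you place ``the bulk of the work'': the localized form of Corollary~\ref{cor-ce} --- an enlargement $\mC_n$ with $\tau^{-1}(n)$ spatial which coincides with $\mC_*$ outside the slab $\tau^{-1}(]n-\tfrac12,n+\tfrac12[)$ --- is never established, and your sketch does not close it. The compactness you invoke, that $\mJ^+_{\mC}(K)\cap\tau^{-1}([a,b])$ is compact, is a statement about $\mC$-causal curves; what you actually need is to rule out that an inextensible $\mC_n$-causal curve remains trapped in $\tau^{-1}(]n-\tfrac12,n[)$, i.e.\ a non-imprisonment statement for the \emph{enlarged} cone field, and for an enlargement supported in a slab this is essentially the spatiality you are trying to prove. ``The control to be inherited by $\mC_n$'' is asserted, not shown, and it is precisely the nontrivial point: the derivation of Corollary~\ref{cor-ce} from Theorem~\ref{thm-one} uses a temporal function that is uniform for a \emph{complete} metric globally, and truncating the enlargement to a slab destroys that global uniformity.

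The paper sidesteps the localization entirely. It takes the enlargements $\mC_i$ globally (from Corollary~\ref{cor-ce}, shrunk so that $\tau$ stays temporal) and glues them: $\mC'=\mC_i\cap\mC_{i+1}$ on $\tau^{-1}(]i,i+1[)$ and $\mC'=\mC_i$ on $N_i$. Local finiteness is then automatic. One no longer has $\mC'\subseteq\mC_i$ globally, only on $\tau^{-1}(]i-1,i+1[)$, but that suffices: if $\sup\tau\circ\gamma=c<\infty$ for some inextensible (hence complete) $\mC'$-causal $\gamma$, pick $j$ with $j-1<c\leq j$; the future end of $\gamma$ eventually lies in $\tau^{-1}(]j-1,c[)$, where $\mC'\subseteq\mC_j$, so it extends (to the past only, completeness handling the future) to an inextensible $\mC_j$-causal curve, which must meet $N_j=\tau^{-1}(j)$ even though $\tau<j$ along all of it --- a contradiction. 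If you want to salvage your write-up with minimal changes, replace the global intersection $\bigcap_n\mC_n$ by this piecewise gluing and run the extension argument in place of your containment criterion; otherwise you owe a full proof of the localized enlargement, which is not a routine adaptation of Theorem~\ref{thm-one}.
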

 
 In the Lorenzian case, this result is \cite[Theorem 4.2]{Sa13}. This paper  considers the case with boundary, which is also included in our setting.
 In the setting of non-degenerate closed cone fields, it was obtained  in \cite[Theorem 2.46]{mi17}.
 
 \begin{proof}
 The levels $N_i:=\{\tau=i\}$ are spatial. 
 For each $i\in \Zm$, we consider a globally hyperbolic enlargement $\mC_i$ of $\mC$
 such that $N_i$ is spatial for $\mC_i$, and $\tau$ is temporal (but not necessarily Cauchy).
 We then consider the closed cone field $\mC'$ which is equal to $\mC_i\cap \mC_{i+1}$
 on  $\tau^{-1}(]i,i+1[)$ for each $i$, and to $\mC_i$ on $N_i$.
 It is not hard to verify that $\mC'$ is a globally hyperbolic cone field, and that all
 the levels $N_i$ are  spatial for $\mC'$, so that $\tau$ is a Cauchy temporal function. 
 \end{proof}

 We also  have a product structure if the domain is smooth (the result does not hold in general for globally hyperbolic cone fields with degenerate points):
 
 \begin{cor}\label{cor-prod}
 	Let $\mC$ be a globally hyperbolic cone field with smooth boundary,  let $V$ be a neighborhood of $\mD$, and let $\tau$ be a completely uniform  temporal function of $\mC$.
 	There exists a  neighborhood $U\subset V$ of $\mD$, and a smooth map $\pi: U\lto N=\{x\in U : \tau(x)=0\}$
 	such that $(\pi, \tau):U\lto N\times \Rm $ is a diffeomophism which sends $\mD$ to 
 	$(N\cap  \mD)\times \Rm$.
 	\end{cor}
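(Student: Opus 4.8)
The plan is to trivialise a neighbourhood of $\mD$ by a complete flow which is transverse to the level sets of $\tau$ and tangent to $\partial\mD$. First, some preliminaries. Since $\tau$ is completely uniform it has no critical point on a neighbourhood of $\mD$, so $\{\tau=0\}$ is a smooth hypersurface there, and, using that $\tau$ restricts to a submersion on $\partial\mD$ near $\mD$ as well, the set $N_0:=\{\tau=0\}\cap\mD$ is a codimension zero submanifold with boundary of $\{\tau=0\}$, with $\partial N_0=N_0\cap\partial\mD$. By the second lemma of this section $\tau$ is Cauchy, so $\tau|_\mD$ is onto $\Rm$ and $N_0\neq\emptyset$.

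The core of the argument is the construction of the transverse vector field. I would fix a globally hyperbolic enlargement $\mC'$ of $\mC$ together with a complete metric $\hat g$ for which $\tau$ is $\hat g$-uniform; such data are produced by the construction recalled after Lemma \ref{lem-solvable} (rescaling $\hat g$ near $\partial\mD$ so that the cones $\mC'(x)$ meet $T_x\partial\mD$, then passing to $\mC'_{|F}$ as in Lemma \ref{lem-solvable} to recover local solvability). Let $X^{\pm}$ be the vector fields from the proof of Lemma \ref{lem-solvable}: sections of $\mC'$ defined near $\mD$, with $X^+$ pointing strictly into $\mD$ and $X^-$ strictly out of $\mD$ along $\partial\mD$. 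Choosing by the implicit function theorem a smooth function $s$ along $\partial\mD$ so that $sX^++(1-s)X^-$ is tangent to $\partial\mD$, and using convexity of the fibres of $\inte\mC'$, one gets a section of $\inte\mC'$ tangent to $\partial\mD$; gluing it by a partition of unity with a fixed section of $\inte\mC'$ over $\inte\mD$ yields a smooth vector field $X$ on a neighbourhood $W\subseteq V$ of $\mD$, with values in $\inte\mC'$, tangent to $\partial\mD$ along $\partial\mD$. Set $\tilde X:=X/(d\tau\cdot X)$ (the denominator is positive since $\tau$ is temporal for $\mC'$): then $d\tau\cdot\tilde X\equiv 1$ on $W$, $\tilde X$ is tangent to $\partial\mD$, and by $\hat g$-uniformity $|\tilde X|^{\hat g}\leq d\tau\cdot\tilde X=1$ on $W$.

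Now I would trivialise $\mD$ and then a neighbourhood. Because $\tilde X$ is tangent to $\partial\mD$ its local flow preserves $\inte\mD$ and $\partial\mD$, hence $\mD$; for $z\in\mD$ the maximal integral curve stays in the closed set $\mD\subseteq W$, has $\hat g$-speed at most $1$, hence — $\hat g$ being complete — is defined for all $t\in\Rm$, with $\tau(\phi^t(z))=\tau(z)+t$. Thus $(n,t)\mapsto\phi^t(n)$ is a bijection $N_0\times\Rm\to\mD$ (injective because $\tau$ strictly increases along orbits, onto because every orbit meets $\{\tau=0\}$, necessarily inside $N_0$) and, being a local diffeomorphism by the flow-box theorem, a diffeomorphism carrying $\partial N_0\times\Rm$ onto $\partial\mD$. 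To pass to a neighbourhood, cut off to $Y=\chi\tilde X$ on $M$ with $\chi\equiv 1$ near $\mD$ and $\supp\chi\subseteq W$; then $|Y|^{\hat g}\leq 1$, so $Y$ is complete, with flow $\psi^t$, and $Y=\tilde X$ near $\mD$. The map $\Psi\colon(n,t)\mapsto\psi^t(n)$ on $\{\tau=0\}\times\Rm$ is everywhere defined, extends the previous map, is a local diffeomorphism on an open set containing $N_0\times\Rm$, and satisfies $\tau\circ\Psi=\mathrm{pr}_{\Rm}$ on an open set $A\supseteq N_0\times\Rm$ (where the orbit stays where $\chi\equiv1$). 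Since $\Psi$ maps the closed set $N_0\times\Rm$ homeomorphically onto the closed set $\mD$, it is injective on a neighbourhood of $N_0\times\Rm$; intersecting with $A$ and shrinking to a product neighbourhood $N'\times\Rm$ with $N_0\subseteq N'\subseteq\{\tau=0\}$ open (possible using a collar of $N_0$ in $\{\tau=0\}$ adapted to the $\Rm$-factor), put $U:=\Psi(N'\times\Rm)$ and $\pi:=\mathrm{pr}_{N'}\circ\Psi^{-1}$. Then $\mD\subseteq U\subseteq V$ is open, $\{x\in U:\tau(x)=0\}=N'$, $(\pi,\tau)=\Psi^{-1}\colon U\to N'\times\Rm$ is a diffeomorphism, and it sends $\mD$ onto $N_0\times\Rm=(N'\cap\mD)\times\Rm$.

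The hard part will be the construction of the transverse vector field tangent to $\partial\mD$: it must at once be transverse to the levels of $\tau$, be tangent to $\partial\mD$, and have bounded norm for a complete metric. This is exactly where the smooth-boundary hypothesis enters, through the vectors $X^{\pm}$ of Lemma \ref{lem-solvable} and a careful choice of complete metric and enlargement making the cones wide enough along $\partial\mD$; by contrast the completeness argument for the flow and the point-set extension to a neighbourhood are routine, though they need to be carried out with care.
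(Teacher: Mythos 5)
Your overall strategy --- build a vector field inside an enlargement of $\mC$ that is tangent to $\partial\mD$ and transverse to the levels of $\tau$, and trivialise by its flow --- is the same as the paper's, and the part of your argument that trivialises $\mD$ itself is sound (the normalisation $d\tau\cdot\tilde X=1$ together with the bound $|\tilde X|^{\hat g}\leq 1$ coming from uniformity is a clean way to get completeness of the flow on $\mD$, without going through Corollary \ref{cor-scf} as the paper does). The gap is in the passage from $\mD$ to a neighbourhood $U$.

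Concretely, the step ``shrinking to a product neighbourhood $N'\times\Rm$'' does not go through. At that point you have an open set $A\subset\{\tau=0\}\times\Rm$ containing $N_0\times\Rm$ on which $\Psi$ is an injective local diffeomorphism with $\tau\circ\Psi=\mathrm{pr}_{\Rm}$; but an open neighbourhood of $N_0\times\Rm$ in $\{\tau=0\}\times\Rm$ need not contain any set $N'\times\Rm$ with $N'$ open containing $N_0$: it can pinch towards $\partial N_0\times\{t\}$ as $t\to\pm\infty$ (already for $N_0=[0,\infty)\subset\Rm$, the open set $\{(x,t): x>-e^{-t^2}\}$ contains no such product). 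A collar of $\partial N_0$ does not help, since its width would have to be uniform in $t$. This is precisely the difficulty that Lemma \ref{lem-tube} is designed to remove: there the field $X$ is made tangent not only to $\partial\mD$ but to every leaf $B\times\{t\}$ of a tubular neighbourhood of $B=\partial\mD$, so that the open set $U=\mD\cup(B\times[0,1/2[)$ is invariant under the flow; flow-invariance of $U$ is what guarantees that every orbit in $U$ stays in $U$, crosses $N=U\cap\{\tau=0\}$ exactly once, and hence that $U$ is globally a product. To repair your proof you should strengthen the construction of $X$ in the same way (tangency to a foliated collar of $\partial\mD$), after which the cut-off $\chi$ and the point-set extension become unnecessary.

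A smaller remark: the lemma you invoke for injectivity (a local diffeomorphism carrying a closed set homeomorphically onto a closed set is injective on a neighbourhood) is false for non-compact closed sets. Fortunately you do not need it: on the set where $\tau\circ\Psi=\mathrm{pr}_{\Rm}$, an equality $\Psi(n,t)=\Psi(n',t')$ forces $t=t'$, and then $n=n'$ because $\psi^t$ is a global diffeomorphism of $M$, the field $Y$ being complete.
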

 
 The proof will be given in section \ref{sec-smooth}.

It is not much harder to find a Cauchy temporal function with two given level sets
 than with one given level set (see Proposition \ref{prop-twocauchy} below) but the following statement is more delicate, and is the main result of the present paper:

\begin{thm}\label{thm-two-unif}
	Let $\mC$ be a globally hyperbolic cone field with smooth boundary.
	Let $H_0,H_1$ be two spatial subsets such that $H_1\cap  \mJ^-(H_0)=\emptyset$. Then there exists a completely uniform
	temporal function $\tau\colon M\to \RR$ such that  $\tau=i$ on $H_i$ for $i=0,1$ (hence 
	$\mD\cap \tau^{-1}(i)= \mD\cap H_i$).
\end{thm}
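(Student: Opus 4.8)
The plan is to reduce the statement, via two applications of Theorem \ref{thm-one}, to a single gluing problem, and then to interpolate between the two uniform temporal functions produced this way using a cut-off built from an auxiliary Cauchy temporal function. First I would record the elementary consequences of the hypothesis. Since $H_0$ and $H_1$ are spatial, every inextensible causal curve $\gamma$ meets $H_0$ at a unique parameter $t_0$ and $H_1$ at a unique parameter $t_1$; the hypothesis $H_1\cap\mJ^-(H_0)=\emptyset$ forces $t_0<t_1$ (if $t_1\le t_0$ then $\gamma|_{[t_1,t_0]}$, or the trivial curve at $\gamma(t_0)=\gamma(t_1)$ when $t_0=t_1$, exhibits a point of $H_1$ in $\mJ^-(H_0)$). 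Consequently $\mJ^-(H_0)\subset\mJ^-(H_1)$, $\mJ^+(H_1)\subset\mJ^+(H_0)$, the diamond $R:=\mJ^+(H_0)\cap\mJ^-(H_1)$ is a closed set (limit curve lemma, as in the proof of Lemma \ref{lem-solvable}) containing both $H_0$ and $H_1$, every causal curve joining a point of $H_0$ to a point of $H_1$ is contained in $R$, and $M\setminus R$ is the disjoint union of the region strictly to the past of $H_0$ and the region strictly to the future of $H_1$. I would also fix, by Proposition \ref{prop-twocauchy}, a Cauchy temporal function $\sigma$ with $\{\sigma=0\}\cap\mD=H_0\cap\mD$ and $\{\sigma=1\}\cap\mD=H_1\cap\mD$; then $\{\,\varepsilon\le\sigma\le 1-\varepsilon\,\}\subset R$ for every $\varepsilon\in(0,1/2)$, and along every causal curve $\sigma$ reaches the value $0$ strictly before the value $1$.

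For the core construction, choose a complete Riemannian metric $g$ (to be adjusted in the last step) and apply Theorem \ref{thm-one} to obtain a $g$-uniform temporal function $\alpha$ with $\alpha|_{H_0}=0$ and a $g$-uniform temporal function $\beta$ with $\beta|_{H_1}=0$; by the observation above, $\alpha>0$ on $H_1$, $\beta<0$ on $H_0$, and on $R$ one has $\alpha\ge 0$, $\beta\le 0$. Now set
$$\tau:=(1-\chi\circ\sigma)\,\alpha+(\chi\circ\sigma)\,(\beta+1),$$
where $\chi\colon\RR\to[0,1]$ is smooth and non-decreasing with $\chi\equiv 0$ on $(-\infty,\varepsilon]$ and $\chi\equiv 1$ on $[1-\varepsilon,\infty)$. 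On $H_0$, $\sigma=0$, so $\tau=\alpha=0$; on $H_1$, $\sigma=1$, so $\tau=\beta+1=1$; and for $(x,v)\in\mC$,
$$d\tau_x\cdot v=(1-\chi)\,d\alpha_x\cdot v+\chi\,d\beta_x\cdot v+\bigl(\beta(x)+1-\alpha(x)\bigr)\,\chi'(\sigma(x))\,d\sigma_x\cdot v .$$
The first two terms already dominate $|v|^g_x$ because $\alpha,\beta$ are $g$-uniform and $0\le\chi\le 1$; since $\chi'\ge 0$ and $d\sigma_x\cdot v>0$, the function $\tau$ is $g$-uniform — hence completely uniform, hence temporal, and Cauchy by the lemma preceding Lemma \ref{lem-solvable} — provided that $\beta+1-\alpha\ge 0$ on $\supp(\chi'\circ\sigma)\subset\{\varepsilon\le\sigma\le 1-\varepsilon\}\subset R$. (That $\tau^{-1}(i)\cap\mD=H_i\cap\mD$ then follows because two spatial subsets meeting every inextensible causal curve once and with $H_i\subset\tau^{-1}(i)$ must coincide on $\mD$.) Thus everything reduces to arranging
$$\alpha-\beta\le 1\qquad\text{on }R.$$

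The main obstacle is precisely this inequality: a priori $\alpha-\beta$ is unbounded on $R$ when $H_0$ and $H_1$ are non-compact and causally far apart in places, so a single interpolation cannot work for an arbitrary $g$. The key is that $g$ is ours to choose. I would additionally arrange, by inspecting the construction in Theorem \ref{thm-one} (or a routine smoothing of its output), a two-sided bound $|d\alpha|_g\le 2$, $|d\beta|_g\le 2$; then for $x\in R$, taking a $\mC$-causal curve from $H_0$ to $x$ (respectively from $x$ to $H_1$), which lies in $R$, one gets $\alpha(x)\le 2\,\ell_g$ and $-\beta(x)\le 2\,\ell_g$ where $\ell_g$ is the $g$-length of that curve. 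Hence it suffices to produce a \emph{complete} metric $g$ for which every $\mC$-causal curve contained in $R$ has $g$-length at most $1/4$. This is where global hyperbolicity enters: exhausting $M$ by compacts $L_n$ and setting $K_n:=R\cap\mJ^+(L_n)\cap\mJ^-(L_n)$, the limit curve lemma bounds the $g_0$-length of causal curves meeting $K_n$ by a constant $c_n$ for a fixed auxiliary complete metric $g_0$, and one defines $g$ by shrinking $g_0$ by a factor $<1/(4c_n)$ on a neighbourhood of $K_n\setminus K_{n-1}$; completeness of $g$ survives because the shrinking factor is bounded below on each compact set and only decays in regions escaping to infinity, so divergent paths still have infinite $g$-length. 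I expect the verification that this rescaled metric is complete, together with the a priori length control on causal curves in $R$, to be the delicate technical heart of the argument. Finally, the smooth-boundary hypothesis is used to keep all these constructions honest up to $\partial\mD$ rather than on a mere neighbourhood of $\mD$ — concretely through the product structure of Corollary \ref{cor-prod} near $\mD$, or by first replacing $\mC$ with a globally hyperbolic enlargement having smooth boundary, for which $H_0$ and $H_1$ remain spatial by Corollary \ref{cor-ce} and Lemma \ref{lem-solvable}.
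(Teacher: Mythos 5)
Your reduction is clean and correctly identifies the crux: the interpolation $\tau=(1-\chi\circ\sigma)\alpha+(\chi\circ\sigma)(\beta+1)$ works exactly when $\alpha-\beta\le 1$ on the transition region, and you are right that this must be forced by the choice of the metric rather than hoped for. But the step where you produce the metric is where the proof breaks. You claim that a complete metric $g_0$, rescaled on a neighbourhood of $K_n\setminus K_{n-1}$ by a factor smaller than $1/(4c_n)$, remains complete ``because the shrinking factor is bounded below on each compact set and only decays in regions escaping to infinity, so divergent paths still have infinite $g$-length.'' This inference is false: $e^{-|x|}g_{\mathrm{eucl}}$ on $\RR^n$ is bounded below on every compact set, decays only at infinity, and is incomplete. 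Concretely, in the Minkowski plane with $H_0=\{y=0\}$ and $H_1$ a spacelike graph growing linearly, causal curves in $R=\mJ^+(H_0)\cap\mJ^-(H_1)$ through the region $|x|\approx n$ have $g_0$-length of order $n$, so your recipe forces the conformal factor to decay at least like $4^{-n}/n$ there; the divergent curve $\{y=1\}\subset R$ then has finite $g$-length and $g$ is not complete. This is not a peripheral technicality: the whole difficulty of the theorem (see the example of Section \ref{sec-ex}) is the tension between making causal curves in the slab short (for uniformity) and keeping non-causal divergent curves long (for completeness), and the paper resolves it with the oscillating spatial hypersurface $G$ of Lemma \ref{lem-osc} together with the metric $h=\lambda g+du\otimes du$, whose second term restores the length destroyed by $\lambda$ for any curve that must repeatedly cross $G$; divergent curves that do not cross $G$ are trapped in one of the bounded sets $F_n,P_n$, which are mutually $1$-separated in the unrescaled metric. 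Your construction has no mechanism playing this role.

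A secondary, lesser gap: you need $\alpha$ simultaneously $g$-uniform and $2$-Lipschitz for $g$ ($|d\alpha|_g\le 2$), and you attribute this to ``inspecting the construction in Theorem \ref{thm-one} or a routine smoothing.'' Theorem \ref{thm-one} only provides the lower bound $d\alpha_x\cdot v\ge|v|^g_x$ on $\mC$; its proof is a locally finite sum $\sum a_nv_n$ with no control on $\sup|dv_n|_g$. One can recover a two-sided bound by passing to a metric of the form $\tfrac12 g+\tfrac18\,d\alpha\otimes d\alpha$, but the added term again measures the variation of $\alpha$ along causal curves, which is exactly the quantity you are trying to bound, so the length estimate for the new metric becomes circular. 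Both issues point the same way: the statement ``there is a complete metric for which every causal curve in $R$ has length $\le 1/4$'' is essentially equivalent to the theorem itself (it follows from the theorem by integrating $d\tau\cdot\dot\gamma\ge|\dot\gamma|_h$ between the levels $0$ and $1$), so it cannot be obtained by a soft rescaling argument and needs something like the paper's Lemma \ref{lem-osc}.
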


The proof in the case where $\mD=M$ is given in Section \ref{sec-two}, the full result is deduced in 
Section \ref{sec-smooth}.
We do not know if the assumption that the domain has smooth boundary is necessary.
 We deduce in Section \ref{sec-unif}:

\begin{thm}\label{thm-dense}
	Let $\mC$ be a globally hyperbolic cone field with smooth boundary, 
	let $\tau$ be a Cauchy causal function, and let $\epsilon>0$ be given.
	Then there exists a completely uniform temporal  function $v$ such that 
	$\sup |v-\tau|\leq\epsilon$.
\end{thm}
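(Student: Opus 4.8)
The plan is to reduce the approximation statement to Theorem \ref{thm-two-unif} applied on a sequence of "slabs" cut out by level sets of $\tau$, and then glue the resulting slab-wise uniform temporal functions together with a partition-of-unity argument controlled by a suitable complete metric. First I would refine the given $\epsilon$: after replacing $\tau$ by a positive multiple we may assume that we want $\sup|v-\tau|\leq \epsilon$ with $\epsilon<1/2$, and I would choose an increasing sequence of regular values $\dots<a_{-1}<a_0<a_1<\dots$ of $\tau$ (using that $\tau$ is Cauchy causal, so proper along causal curves) with $a_{i+1}-a_i<\epsilon$ and $a_i\to\pm\infty$, so that the level sets $N_i:=\tau^{-1}(a_i)$ are spatial subsets (they meet each inextensible causal curve exactly once since $\tau$ is Cauchy, they separate $M$, and $\overline{N_i}\cap\mD=N_i\cap\mD$ because $\tau$ is continuous). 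Since $\tau$ is causal and $a_i<a_{i+1}$, we have $N_{i+1}\cap \mJ^-(N_i)=\emptyset$: any causal curve from $N_{i+1}$ backward would decrease (weakly) below $a_{i+1}$ only... here I must be slightly careful — causal, not time, so I would instead use that $\tau\circ\gamma$ is nondecreasing, giving $N_{i+1}\cap\mJ^{-}(N_i)\subset\tau^{-1}(a_i)\cap\tau^{-1}(a_{i+1})=\emptyset$, which is the hypothesis needed to invoke Theorem \ref{thm-two-unif}.

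Next, for each $i$ I would apply Theorem \ref{thm-two-unif} (the cone field $\mC$ has smooth boundary by hypothesis) with $H_0=N_i$, $H_1=N_{i+1}$, rescaled so that the value $i$ is replaced by $a_i$ and $i+1$ by $a_{i+1}$: this yields a completely uniform temporal function $\tau_i\colon M\to\RR$ with $\tau_i=a_i$ on $N_i$ and $\tau_i=a_{i+1}$ on $N_{i+1}$, say $g_i$-uniform for a complete metric $g_i$. On the closed slab $S_i:=\tau^{-1}([a_i,a_{i+1}])$ the function $\tau_i$ satisfies $|\tau_i-\tau|\leq a_{i+1}-a_i<\epsilon$, because both $\tau_i$ and $\tau$ are causal (hence monotone along causal curves crossing the slab) and agree with $a_i,a_{i+1}$ on the two boundary components $N_i,N_{i+1}$, so both are squeezed into $[a_i,a_{i+1}]$ on $S_i$. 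The remaining task is to assemble the $\tau_i$ on overlapping slightly-enlarged slabs into a single smooth function $v$ that is still temporal, still completely uniform, and still within $\epsilon$ of $\tau$.

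For the gluing I would take a smooth partition of unity $\{\rho_i\}$ subordinate to the open cover $\{U_i:=\tau^{-1}(]a_{i-1},a_{i+1}[)\}$, arranged so that $\sum_i\rho_i\equiv 1$ and on each slab $\tau^{-1}([a_i,a_{i+1}])$ only $\rho_i$ and $\rho_{i+1}$ are nonzero, and set $v:=\sum_i\rho_i\,\tau_i$. That $v$ is smooth and satisfies $|v-\tau|\leq\epsilon$ is then immediate from convexity, since on each slab $v$ is a convex combination of values each within $\epsilon$ of $\tau$. For temporality and uniformity one computes $dv_x=\sum_i\rho_i(x)\,(d\tau_i)_x+\sum_i\tau_i(x)\,(d\rho_i)_x$; the first sum evaluated on $(x,w)\in\mC$ is $\geq \sum_i\rho_i(x)|w|^{g_i}_x>0$, which is positive, so $v$ is temporal. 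The error term $\sum_i\tau_i(x)\,(d\rho_i)_x$ is the genuine obstacle: because $\sum_i d\rho_i=0$, at a point of the overlap between slabs $i$ and $i+1$ this term equals $(\tau_{i+1}(x)-\tau_i(x))\,(d\rho_{i+1})_x$, whose size is controlled by $|\tau_{i+1}-\tau_i|\leq 2\epsilon$ times $|d\rho_{i+1}|$; to keep $v$ temporal one needs the positive contribution from the first sum to dominate this, and to get complete uniformity one then wants to bound $v$ from below by $|w|^g$ for a complete $g$. The standard device, which I expect to be the main work, is to replace the $\tau_i$ before gluing by steep multiples: since in Theorem \ref{thm-two-unif} we may freely rescale (if $\tau_i$ is $g_i$-uniform then $\lambda_i\tau_i$ is $\lambda_i^2 g_i$-uniform, still with the right boundary values after re-centering), and since the "overshoot" beyond the prescribed boundary values can be controlled, one arranges that on the overlaps the remainder term is an arbitrarily small fraction of the leading term. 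Then $dv_x\cdot w\geq \tfrac12\sum_i\rho_i(x)|w|^{g_i}_x\geq\tfrac12\,|w|^{g}_x$ where $g:=(\sum_i\rho_i g_i^{-1})^{-1}$ (the harmonic-type average of the $g_i$, which is complete because each $g_i$ is and the cover is locally finite — one verifies Cauchy-completeness of $g$ along sequences), whence $\tfrac12 v$ — or $v$ after a final rescale — is completely uniform. I would present the monotonicity-of-causal-functions-on-a-slab lemma and the completeness of the averaged metric as the two technical points, with the steepness-rescaling bookkeeping being where the argument is least routine.
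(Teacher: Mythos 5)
Your overall plan (slice by level sets, make the function uniform slab by slab, control completeness of the limiting metric) is the right one and is close in spirit to the paper's, but as written it has two genuine gaps. First, the hypothesis is only that $\tau$ is a Cauchy \emph{causal} function: it is merely continuous, so you cannot speak of its regular values, and even for a smooth causal function a regular level set need not be spacelike (the differential may vanish on cone vectors) nor acausal (a causal function can be constant along a nontrivial causal segment), so the $N_i$ are not spatial subsets and Theorem \ref{thm-two-unif} does not apply to them. The paper first replaces $\tau$ by a nearby Cauchy \emph{temporal} function using the density result of \cite{BS2}; some such reduction is indispensable and is missing from your argument.

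Second, the partition-of-unity gluing $v=\sum_i\rho_i\tau_i$ does not work as described, and the point you defer as ``the main work'' is where the approach breaks. (a) Theorem \ref{thm-two-unif} controls $\tau_{i+1}$ only on $N_{i+1}$ and $N_{i+2}$; on the slab $S_i$ one only gets $\tau_{i+1}\leq a_{i+1}$ with no lower bound, so neither the convexity estimate $|v-\tau|\leq\epsilon$ nor the bound $|\tau_{i+1}-\tau_i|\leq 2\epsilon$ on the error term $(\tau_{i+1}-\tau_i)\,d\rho_{i+1}$ is justified. (b) The steepening device is self-defeating: an affine rescale $\lambda\tau_i+c$ that preserves \emph{both} prescribed values $a_i$ on $N_i$ and $a_{i+1}$ on $N_{i+1}$ forces $\lambda=1$, so you cannot make $\tau_i$ steeper while keeping the boundary data that makes the glued function continuous across slabs. (c) Completeness of the harmonic-type average $g=(\sum_i\rho_i g_i^{-1})^{-1}$ does not follow from completeness of the $g_i$: that average is dominated by each $g_i$ where it is active, and an infimum of complete metrics need not be complete --- this is precisely the failure mode exhibited by the example in Section \ref{sec-ex}. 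The paper avoids all three problems by working in place: Lemma \ref{lem-mod} first makes $\tau$ $g$-uniform near each integer level, and then Proposition \ref{prop-two-unif} modifies $\tau$ and $g$ \emph{only inside} each strip $\tau^{-1}(]k,k+1[)$, leaving them unchanged outside, so the sequence stabilizes with no gluing error; crucially, Proposition \ref{prop-two-unif} also builds in the separation $d_h(\tau^{-1}(k),\tau^{-1}(k+1))\geq 1/3$ (via the $du\otimes du$ term), which is exactly what forces a finite-length curve for the limit metric to meet only finitely many slabs and hence yields completeness. Your proposal would need substitutes for both mechanisms to be repaired.
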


We will however see in Section \ref{sec-ex} an example of a Cauchy temporal function which is not completely uniform.

\section{Cauchy subsets of locally solvable closed fields}

We consider in this section a locally solvable closed cone field $\mC$.
\begin{defn}\label{def1}
We say that $H\subset M$ is a Cauchy subset if there exists a neighborhood $\tilde M$ of the domain $\mD$
and two open sets $U^{\pm}$ of $\tilde M$ such that $U^+, U^-$   and $\tilde M \cap H$ form a partiction of  $\tilde M$ and such that, for each inextensible causal curve $\gamma : \Rm \lto M$, there exist two finite times $a\leq b$ such that 
$\gamma(]-\infty, a[)\subset U^-$, $\gamma(]b,,\infty[)\subset U^+$ and
$\gamma([a,b])\subset H$.
\end{defn}
This slightly complicated definition is designed to be as coherent as possible with the  terminology used in the classical setting of Lorentzian manifolds.

\begin{prop}\label{prop-ch}
	If the closed cone field $\mC$ is locally solvable and admits a Cauchy subset, then it is globally hyperbolic.
\end{prop}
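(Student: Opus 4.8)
The plan is to verify the two ingredients of global hyperbolicity that are not among the hypotheses: that $\mC$ is causal, and that $\mJ^+(K)\cap\mJ^-(K')$ is compact for all compact $K,K'$ (local solvability being assumed). Fix once and for all a complete Riemannian metric $g$ on $M$. Since $\mC$ is locally solvable, every inextensible causal curve is complete, hence, after reparametrization by $g$-arclength, is defined on all of $\Rm$ (a finite $g$-length would force convergence at an endpoint, contradicting completeness). The one elementary remark about Cauchy subsets used throughout: a causal curve stays in $\mD\subset\tilde M$, and $U^+,U^-,H\cap\tilde M$ partition $\tilde M$, so the times $a\le b$ of Definition \ref{def1} are determined by $\gamma$, namely $\gamma^{-1}(U^-)=(-\infty,a)$, $\gamma^{-1}(H)=[a,b]$, $\gamma^{-1}(U^+)=(b,\infty)$; write $a(\gamma),b(\gamma)$ for them.

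\emph{Causality.} If $x\in\mJ^{++}(x)$ there is a causal loop $\sigma:[0,T]\to M$ with $T>0$ and $\sigma(0)=\sigma(T)=x$; its $T$-periodic extension $\Sigma:\Rm\to M$ is locally Lipschitz with $\dot\Sigma\in\mC$ almost everywhere, hence an inextensible causal curve. Pick $t_0<a(\Sigma)$: then $\Sigma(t_0)\in U^-$, whereas $\Sigma(t_0+kT)=\Sigma(t_0)$ lies in $U^+$ as soon as $t_0+kT>b(\Sigma)$, contradicting $U^+\cap U^-=\emptyset$. So $\mC$ is causal.

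\emph{Compactness of the diamonds.} Fix compact $K,K'$ and a sequence $(y_n)$ in $\mJ^+(K)\cap\mJ^-(K')$; I will extract a subsequence converging to a point of this set, which gives compactness since $M$ is metrizable. For each $n$ concatenate a causal curve from some $p_n\in K$ to $y_n$ with one from $y_n$ to some $q_n\in K'$, extend the result to an inextensible causal curve, and parametrize by $g$-arclength so that it becomes $\gamma_n:\Rm\to M$ with $\gamma_n(0)=p_n$, $\gamma_n(u_n)=y_n$, $\gamma_n(v_n)=q_n$ and $0\le u_n\le v_n$. After passing to a subsequence: $p_n\to p\in K$, $q_n\to q\in K'$, and, by the limit curve lemma \cite[Section 2.3]{BS1}, $\gamma_n\to\gamma$ uniformly on compact sets for an inextensible causal curve $\gamma:\Rm\to M$ with $\gamma(0)=p$. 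The heart of the proof is that $u_n$ and $v_n$ stay bounded. First, $\gamma(b(\gamma)+1)\in U^+$, so by openness of $U^+$ we have $\gamma_n(b(\gamma)+1)\in U^+$ for $n$ large, i.e.\ $b(\gamma_n)<b(\gamma)+1$. Second, the shifted curves $\tilde\gamma_n:=\gamma_n(v_n+\,\cdot\,)$ are inextensible causal with $\tilde\gamma_n(0)=q_n\to q$, so, passing to a further subsequence, $\tilde\gamma_n\to\tilde\gamma$ uniformly on compacts for an inextensible causal curve $\tilde\gamma$ with $\tilde\gamma(0)=q$; fixing any $c>-a(\tilde\gamma)$ we get $\tilde\gamma(-c)\in U^-$, hence $\gamma_n(v_n-c)=\tilde\gamma_n(-c)\in U^-$ for $n$ large, i.e.\ $v_n-c<a(\gamma_n)\le b(\gamma_n)<b(\gamma)+1$. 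Thus $v_n$ — and therefore $u_n\in[0,v_n]$ — is bounded. Extract once more so that $u_n\to u$ and $v_n\to v$ with $0\le u\le v$; uniform convergence on compacts then gives $\gamma(0)=p\in K$, $\gamma(v)=\lim q_n=q\in K'$ and $\gamma(u)=\lim y_n$, so the subarcs $\gamma|_{[0,u]}$ and $\gamma|_{[u,v]}$ exhibit $\lim y_n$ in $\mJ^+(K)\cap\mJ^-(K')$. Together with causality and the assumed local solvability, $\mC$ is globally hyperbolic.

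I expect the main obstacle to be exactly the boundedness of $u_n$ and $v_n$. A Cauchy subset controls only the parameter interval $[a(\gamma_n),b(\gamma_n)]$ on which a given inextensible causal curve meets $H$; it bounds neither the $g$-length of the causal curves from $K$ to $K'$, nor their location, so applying the limit curve lemma only at the base point $p_n$ does not by itself prevent $u_n,v_n\to\infty$. The device that makes it work is to run the limit curve lemma from both ends and to use that every inextensible causal curve passes through the open sets $U^-$ and $U^+$ monotonically: a parameter at which a limit curve lands in the open set $U^-$ (or $U^+$) persists for the approximating curves, and this is what ultimately pins $v_n$, hence $u_n$, into a fixed bounded range.
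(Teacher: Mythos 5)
Your proof is correct and follows essentially the same route as the paper's: the causality part is the paper's one-line observation that a periodic inextensible causal curve cannot start in $U^-$ and end in $U^+$, and the compactness part applies the limit curve lemma once anchored at $K$ and once anchored at $K'$, using the Cauchy property of $H$ (persistence of landing in the open sets $U^{\pm}$ under uniform convergence) to pin the relevant arclength parameters into a bounded range, exactly as the paper does with its times $s_n$ and $L_n$. The only difference is bookkeeping: you track the partition times $a(\gamma_n),b(\gamma_n)$ and always parametrize from $K$, which lets you avoid the paper's case split on whether the curve is anchored at $K_1$ or $K_2$.
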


\begin{proof}
It follows immediately from the definition that inextensible causal curves start in $U^-$ and end in $U^+$,
hence they cannot be periodic: the cone field is causal.

We now prove the compactness of $\mJ^+(K_1)\cap \mJ^-(K_2)$ when $K_1$ and $K_2$ are compact.
If $x_n$ is a sequence in $\mJ^+(K_1)\cap \mJ^-(K_2)$ , then for each $n$ there exists 
an inextensible causal curve  $\gamma_n:\Rm \lto M$, parametrized by arclength, and  
either times $s_n\geq 0$, $L_n\geq t_n\geq 0$, such that $\gamma_n(0)\in K_1$, $\gamma_n(t_n)=x_n$, $\gamma_n(s_n)\in H$, $\gamma_n(L_n)\in K_2$,
or times   $s_n\leq 0$, $-L_n\leq t_n\leq 0$, such that $\gamma_n(0)\in K_2$, $\gamma_n(t_n)=x_n$, $\gamma_n(s_n)\in H$, $\gamma_n(-L_n)\in K_1$.

By an extraction, we can suppose that only one of the two situations occurs, we will treat the first case, the second being similar.

The curves $\gamma_n$ converge, uniformly on compact sets, to an inextensible causal curve $\gamma: \Rm\lto M$ (see \cite[Proposition 2.14]{BS1}).
Since $H$ is Cauchy, there exists $\theta$ such that $\gamma(\theta)\in U^+$, hence $\gamma_n(\theta)\in U^+$ for large $n$, 
hence $s_n\leq \theta$.  The sequence $s_n$ is thus  bounded.

 If $L_n \leq s_n$ for infinitely many values of $n$, then  up to extraction, $x_n=\gamma_n(t_n)\lto \gamma(t)\in A$.

Otherwise, we can assume that $L_n\geq s_n$ for each $n$. Then, we can prove as above, by considering 
a limit of the curves $\gamma_n(t-L_n)$, that $L_n-s_n$ is bounded and the conclusion follows.
\end{proof}

%

Another lemma will be useful :

\begin{lem}\label{lem-F}
	Let $\mC$ be a globally hyperbolic cone field, and $F\subset M$ be a closed past subset, i.e. $\mJ^-(F)\subset F$.
	Then there exists a smooth causal function $ v$ which is null on $F$, positive and  temporal outside of $F$,
	and such  that $ v \circ \gamma(\Rm)\supset ]0, \infty[$ for each inextensible causal curve which is not contained in $F$. If $H$ is acausal, then $v$ is a smooth time function.
\end{lem}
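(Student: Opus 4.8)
The plan is to build $v$ as an integral over a one-parameter family of auxiliary causal functions, one for each "time level", summed with fast-decaying weights. First I would use global hyperbolicity to produce, for each $t\in\Rm$, an enlargement $\mC_t$ of $\mC$ and a function as follows. Consider the closed past set $F$. For a fixed $t$, I want a smooth causal function $w_t$ that vanishes on $F$, is positive and temporal off $F$, and is at least $1$ on $\mJ^+(\{w_t\ge 1\})$ in a way that detects causal curves leaving $F$. The natural device is to fix an exhaustion of $M$ by compact sets and, using that $\mJ^+(K)\cap\mJ^-(K')$ is compact, produce for each compact $K$ a smooth causal function that is $0$ on $F$ and $>0$ on $K\setminus F$; this is a standard consequence of the theory of temporal functions for globally hyperbolic cone fields established in \cite{BS1} (applied to the enlargement $\mC'$ of $\mC$ given by Lemma~\ref{lem-solvable}, restricting attention to a neighborhood of $\mD$). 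The point is that off $F$ one has genuine temporal functions available, and on $F$ one can force vanishing because $F$ is a \emph{past} set, so being causal is compatible with $v|_F\equiv 0$ and $v>0$ just outside.

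Next I would patch these local pieces together. Choose a locally finite cover of $M\setminus F$ (or of a neighborhood of $\mD$ minus $F$) by open sets $V_n$, on each of which there is a smooth causal function $v_n\ge 0$, vanishing on $F$, with $dv_n$ strictly positive on $\mC$ over $\supp$ of a subordinate bump, and normalized so that $v_n\le 2^{-n}$ together with all derivatives up to order $n$ on all of $M$. Set $v:=\sum_n \phi_n v_n$ where $\{\phi_n\}$ is a partition of unity. Then $v$ is smooth (the series converges in every $C^k$ norm), $v\ge 0$, $v\equiv 0$ on $F$, and $v>0$ on $M\setminus F$. Causality is inherited: $dv_x\cdot u=\sum_n d(\phi_n v_n)_x\cdot u\ge 0$ for $(x,u)\in\mC$ provided each summand is causal, which one arranges by the usual trick of replacing $v_n$ with $v_n$ times a large causal "background" function or, more simply, by noting $d(\phi_n v_n)=v_n\,d\phi_n+\phi_n\,dv_n$ and absorbing the bad term $v_n\,d\phi_n$ using that $v_n$ is much smaller than $\phi_n\,dv_n$ where it matters — this is exactly the standard steepness/summation argument from \cite{BS1}. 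Temporality off $F$ follows because near any $x\notin F$ at least one $\phi_n(x)>0$ with $dv_n$ strictly positive on $\mC(x)$, and that term dominates.

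The remaining, and genuinely delicate, point is the surjectivity claim: $v\circ\gamma(\Rm)\supset\,]0,\infty[$ for every inextensible causal curve $\gamma$ not contained in $F$. I would argue as follows. If $\gamma:\Rm\to M$ is inextensible (hence complete, by local solvability and Lemma~1.1) and leaves $F$, then since $F$ is a past set, once $\gamma$ exits $F$ it never returns, so there is $a\in\Rm$ with $\gamma(]-\infty,a])\subset F$ (possibly $a=-\infty$, i.e. $\gamma$ never in $F$) and $\gamma(]a,\infty[)\cap F=\emptyset$; thus $v\circ\gamma$ is nondecreasing, equals $0$ on $]-\infty,a]$, and is strictly increasing on $]a,\infty[$. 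It remains to show $v\circ\gamma(t)\to\infty$ as $t\to\infty$. Here I would invoke that $\gamma$, being inextensible and complete, is not contained in any compact set (if it were, the hyperbolicity/limit curve machinery would contradict inextensibility), so it eventually enters regions where one of the building blocks $v_n$ has been arranged to be large along long causal curves; concretely, I would build the $v_n$ from Cauchy-type temporal functions on the enlargement (available by the already-quoted existence results, Theorems 1.something / the results recalled after Lemma~1.2) whose levels are crossed by $\gamma$, and choose the weights so that each crossing contributes a fixed positive amount to $v\circ\gamma$ while not destroying convergence. This balancing — getting unbounded growth along every escaping causal curve while keeping the sum $C^\infty$ — is the main obstacle, and it is resolved by the same bookkeeping used to produce completely uniform temporal functions in \cite{BS1}.

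Finally, for the last sentence: if $H$ is acausal, the enlargement and building blocks can be chosen so that no causal curve is trapped in a level set of $v$ near $H$, whence $v\circ\gamma$ is strictly increasing for \emph{every} causal curve, i.e. $v$ is a time function; this is a local refinement at $H$ and does not interfere with the global construction above.
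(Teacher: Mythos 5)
Your overall shape (a weighted sum $\sum a_n v_n$ of causal building blocks, with weights decaying fast enough for $C^\infty$ convergence in the manner of \cite{Fa}) does match the paper's, but you miss the one structural observation that makes the lemma short, and as a result two of your steps are genuinely gapped. The paper notes that the complement $U=M\setminus F$ of a closed past set is itself an open globally hyperbolic cone field, hence carries a Cauchy temporal function $\tau:U\to\Rm$; the building blocks are then $v_k=\eta_k\circ\tau$ extended by $0$ across $F$, where $\eta_k$ vanishes on $]-\infty,-k]$ and is the identity on $[1,\infty[$. Each $v_k$ is a \emph{globally} defined causal function on $M$ (a nondecreasing reparametrization of a temporal function, locally constant across the past set), and on $U$ the sum is $v=\eta\circ\tau$ with $\eta=\sum a_k\eta_k$ a diffeomorphism of $\Rm$ onto $]0,\infty[$, so causality, temporality off $F$, and the range statement all come for free.

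The first gap in your version: the patching $v=\sum_n\phi_n v_n$ by a partition of unity does not preserve causality. In $d(\phi_n v_n)=v_n\,d\phi_n+\phi_n\,dv_n$ the term $v_n\,d\phi_n$ is negative on cone vectors wherever $\phi_n$ decreases along $\mC$, and ``$v_n$ is much smaller than $\phi_n\,dv_n$ where it matters'' is not an argument: at points where $\phi_n$ is small but $d\phi_n$ is not, there is nothing to absorb the bad term into. The summation trick of \cite{BS1} adds functions that are each globally causal; it never multiplies a causal function by a bump. The second gap is the one you flag yourself: the surjectivity $v\circ\gamma(\Rm)\supset\,]0,\infty[$. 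With decaying weights you would need either a single block unbounded along every escaping causal curve, or a quantified lower bound on the contribution of infinitely many blocks; ``it eventually enters regions where one of the building blocks has been arranged to be large'' specifies neither, and deferring to unspecified bookkeeping leaves the main claim unproved. In the paper this step is one line: a future-inextensible causal curve in $U$ has $\tau\circ\gamma\to+\infty$ by Cauchyness, hence $v\circ\gamma=\eta\circ\tau\circ\gamma\to+\infty$; and if $\gamma$ meets $F$ it does so on an initial segment (as you correctly observe), where $v=0$. Finally, the existence of your local blocks (smooth, causal, vanishing exactly on $F$, temporal on $K\setminus F$) is essentially the content of the lemma itself, so invoking it as ``standard'' is circular unless you derive it from the Cauchy temporal function on $U$ as the paper does.
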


\begin{proof}
Let $U$ be the complement of $F$. It is easy to see that  $(U,\mC|_U)$ is globally hyperbolic, hence 
admits a smooth Cauchy temporal function $\tau:U\lto \Rm$.
Let $\eta_k\colon \Rm\lto [0,\infty)$ be a smooth function such that $\eta_k(t)=0$
for $t\leq -k$, $\eta'_k(t)>0$ for $t>-k$, and $\eta_k(t)=t$ for $t\geq 1$. Each of the functions 
$\eta_k\circ \tau$ extends by $0$ to a smooth function $v_k$ on $M$. 
If $\{a_k\}_{k\in \mathbb{N}}$ is a sequence of positive numbers which decreases sufficiently fast,
(see \cite[Lemma 3.2]{Fa} for the existence of such a sequence) then $\eta :=\sum a_k \eta_k$ is a smooth diffeomorphism from
$\Rm$ to $]0, \infty[$ and  $v:=\sum a_kv_k$ is a smooth function on $M$, which is null on $F$, and which is 
positive and temporal  on its complement. Finally, if $\gamma$ is an inextensible causal curve for $\mC$
which is not contained in $F$, then either $\gamma(\Rm)$ is contained in $U$, and then $\gamma$
is an inextensible causal curve of $(U,\mC|_U)$, hence $\tau\circ \gamma(\Rm)=]-\infty, \infty[$
and $\tau\circ \gamma(\Rm)=]0, \infty[$; or $\gamma$ enters $F$ and then $v\circ \gamma (\Rm)=[0,\infty[$. 
\end{proof}

%
%
%
%

\begin{prop}\label{prop-far}
	Let $H\subset M$ be a Cauchy subset, $U$ an open neighborhood of $H$, and $g$ a Riemannian metric. Then there exists a smooth causal Cauchy  function
	$v \colon M\to \Rm$ satisfying   $v|_H \equiv 0$ and $v^{-1}(0)=H$ in a neighborhood of $\mD$ , which is temporal outside of $H$ and $g$-uniform outside of $U$.  In the case where $H$ is moreover  acausal, $v$ is a smooth Cauchy time function.
\end{prop}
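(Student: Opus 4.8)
The plan is to produce $v$ by a two‑sided application of Lemma~\ref{lem-F}, choosing the auxiliary Cauchy temporal functions that appear in its proof to be uniform for a \emph{complete} metric dominating $g$, so that the cut‑off construction automatically yields $g$‑uniformity away from $H$. First, since $\mC$ admits the Cauchy subset $H$, Proposition~\ref{prop-ch} shows it is globally hyperbolic; and since ``$g$‑uniform outside $U$'' is a stronger requirement for a smaller $U$, we may shrink $U$ whenever convenient. Write $P:=\mD\setminus U^+$ and $N:=\mD\setminus U^-$. Using that causal curves lie in $\mD\subset\tilde M$ and that a Cauchy subset absorbs the future (resp.\ the past) of every inextensible causal curve, one checks that $P$ is a closed past subset, $N$ a closed future subset, $P\cap N=H\cap\mD$, $\mD\setminus P=U^+\cap\mD$ and $\mD\setminus N=U^-\cap\mD$.

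Now apply Lemma~\ref{lem-F} to the closed past subset $P$ for $\mC$, and (after reversing the cone field) to $N$ for $-\mC$. In the proof of that lemma the only free choice is the Cauchy temporal function on the complement of the relevant set; take it uniform for a complete metric $\geq g$ (such a metric exists, and uniformity for it gives both $g$‑uniformity and the Cauchy property), and arrange in addition, by multiplying the output by a suitable constant, that the associated diffeomorphism $\eta_\pm\colon\Rm\to\,]0,\infty[$ satisfies $\eta'_{\pm}\geq 1$ on some half‑line $[-T,\infty)$. Denote the outputs by $v^+\geq0$ and $v^-\leq0$ and set $v:=v^++v^-$. Then $v$ is smooth and causal; since $v^+$ vanishes exactly on $P$ and is positive elsewhere while $v^-$ vanishes exactly on $N$ and is negative elsewhere, $v$ vanishes exactly on $P\cap N=H\cap\mD$, is positive on $U^+$ and negative on $U^-$, so $v^{-1}(0)=H$ near $\mD$; outside $H$ one of $v^+,v^-$ is temporal and the other causal, so $v$ is temporal there; and along any inextensible causal curve $\gamma$, $v^+\circ\gamma$ is nondecreasing onto $[0,\infty)$ while $v^-\circ\gamma$ is nondecreasing with values in $(-\infty,0]$, so $v\circ\gamma$ is continuous and tends to $\pm\infty$ at the two ends, i.e.\ $v$ is Cauchy. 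If moreover $H$ is acausal, no nontrivial causal curve stays inside $H$, so $v\circ\gamma$ is strictly increasing and $v$ is a time function.

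It remains to secure $g$‑uniformity on $\mD\setminus U=(U^+\cap\mD\setminus U)\sqcup(U^-\cap\mD\setminus U)$. On the first piece $v^-$ is causal and $v^+=\eta_+\circ\tau_+$, whence $dv^+_x\cdot\xi=\eta'_+(\tau_+(x))\,d(\tau_+)_x\cdot\xi\geq\eta'_+(\tau_+(x))\,|\xi|^g_x$, which is $\geq|\xi|^g_x$ as soon as $\tau_+(x)\geq-T$; the second piece is symmetric. So everything reduces to the following geometric point, which I expect to be the main obstacle: $\tau_+$ (a Cauchy temporal function of the globally hyperbolic region $M\setminus P$, uniform for a complete metric $\geq g$) can be chosen bounded below on $U^+\cap\mD\setminus U$ — equivalently, this set should lie to the future of a single Cauchy surface of $M\setminus P$, whose past boundary inside $\mD$ is $H\cap\mD\subset U$. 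I would handle this by shrinking $U$ so that $U\cap(M\setminus P)\cap\mD$ is a causally convex neighbourhood of $H\cap\mD$ in $M\setminus P$, exhibiting in it a spacelike Cauchy surface $S_+$ of $M\setminus P$ whose causal past is contained in $U$, and taking $\tau_+$ adapted to $S_+$ so as to be nonnegative on $\mJ^+(S_+)\supseteq U^+\cap\mD\setminus U$ — obtained from \cite{BS1} inside the globally hyperbolic region $\mJ^+(S_+)$ and extended to $M\setminus P$ by the cut‑off device of Lemma~\ref{lem-F}; the bound for $\tau_-$ on $U^-\cap\mD\setminus U$ is symmetric.

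Granting this geometric step, what remains — the past/future subset checks, the analysis of the cut‑offs, and the Cauchy and temporal verifications — is routine bookkeeping given Lemma~\ref{lem-F}. When $H$ is acausal, the case needed in Theorem~\ref{thm-one}, the geometric step is transparent since a thin diamond‑type neighbourhood of $H$ works directly; the difficulty in the general statement is arranging the shrinking of $U$ so that the relevant neighbourhood of the possibly non‑smooth past boundary $H\cap\mD$ is causally convex and contains a Cauchy surface of $M\setminus P$ with past inside $U$, which calls for an exhaustion argument along $H\cap\mD$.
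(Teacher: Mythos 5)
Your reduction to Lemma~\ref{lem-F} and the surrounding bookkeeping (the past/future subsets $P$ and $N$, the sign analysis, the Cauchy and temporal verifications) are sound and broadly parallel to what the paper does with $J^\pm=(H\cap\tilde M)\cup U^\pm$. The gap is exactly at the step you flag as the main obstacle, and your proposed resolution does not close it. Writing $v^+=\eta_+\circ\tau_+$ makes $v^+$ $g$-uniform only where $\eta_+'(\tau_+)\geq 1$, i.e.\ on a superlevel set $\{\tau_+\geq -T\}$; so you need a $g$-uniform Cauchy temporal function $\tau_+$ of $M\setminus P$ one of whose sublevel sets is contained in $U$. That is not a geometric side issue: it is a relative version of the proposition itself (a uniform temporal function with prescribed behaviour near $H$), and your concrete recipe reproduces the same difficulty one level down. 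If $\tau_+$ is obtained by taking a uniform temporal function $\sigma$ on $\mJ^{++}(S_+)$ and extending it by the cut-off of Lemma~\ref{lem-F}, then uniformity of $\eta\circ\sigma$ fails on $\{\sigma<T_0\}$, and there is no reason for this set to be contained in $U$: even when a spacelike Cauchy surface $S_+$ with $\mJ^-(S_+)\cap(M\setminus P)\subset U$ exists, the set $\mD\setminus U$ accumulates on $S_+$ whenever $U$ pinches onto $H$ at infinity. For instance, in the Minkowski plane with $H=\{y=0\}$ and $U=\{|y|<1/(1+x^2)\}$, any admissible graph $S_+=\{y=s(x)\}$ must satisfy $0<s(x)<1/(1+x^2)$, so the points $(x,1/(1+x^2))\notin U$ come arbitrarily close to $S_+$, where $\sigma\to-\infty$ along causal curves; the required lower bound for $\sigma$ on $(M\setminus P)\setminus U$ therefore fails, and inserting further intermediate surfaces only pushes the problem back.

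The paper's proof is organized precisely to avoid any such global lower bound. It exhausts $J^+$ by compact sets $K_n$ with $\mJ^-(K_n)\cap J^+\subset K_n$ and $K_{n-1}\subset\mathring K_n$, takes for each $n$ a causal function $v_n$ vanishing on $K_n\cup J^-$ and temporal outside it (Lemma~\ref{lem-F}), and then chooses the constant $a_n$ so that $a_nv_n$ is $g$-uniform on the compact annulus $F_{n+2}\setminus(\mathring K_{n+1}\cup U)$ --- which is possible because a temporal function becomes uniform on any compact set disjoint from its zero set after multiplication by a large constant. The locally finite sum $\sum_n a_nv_n$ is then uniform on all of $J^+\setminus U$. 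To salvage your scheme you would need this per-piece rescaling (or an equivalent device) in place of the single reparametrization $\eta_+$; a single Cauchy surface $S_+$ cannot carry the construction.
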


\begin{rmk}\label{cor-cc}
For $U=M$ the proposition extends \cite[Theorem 5.15]{besa4}.
\end{rmk}

\begin{proof}[Proof of Proposition \ref{prop-far}.]
We denote by  $J^\pm:=(H\cap \tilde M)\cup U^{\pm}$, where $\tilde M$ and $U^{\pm}$
are  chosen according to Definition \ref{def1}.

For each compact set $K\subset J^+$, the set $\mJ^-(K) \cap J^+=\mJ^-(K)\cap \mJ^+(H)$ 
is compact. We can therefore build a sequence of compact sets $K_n\subset J^+$
such that $J^+=\cup _n K_n$,  
$\mJ^-(K_{n})\cap J^+\subset K_n$, and such that $K_{n-1}$  is contained  in  the interior $\mathring K_n$ of $K_{n}$ in  $J^+$  for each $n\geq 1$.

Using Lemma \ref{lem-F} in $\tilde M$ with $F=J^-$, we find a smooth causal function 
$v_0: \tilde M\lto [0, \infty[$ which is null on $J^-$, positive and temporal on the complement of $F$, and which moreover has the property that $v_0\circ \gamma$
takes all values in $[0, \infty)$ when $\gamma$ is an inextensible causal curve.

For each $n\geq 1$, we set $F_n=K_n \cup J^-$. By Lemma \ref{lem-F}, there exists a smooth 
causal function $v_n$ on $\tilde M$ which is  null on $F_n$
and which is positive and temporal outside of this set. 

Since the set $K_2\setminus U$ is compact and disjoint from $J^-$, there exists a positive constant $a_0$ such that $a_0 v_0$ is uniform on $K_2\setminus U$.
Then for each $n\geq 1$, 
since the set $F_{n+2}\setminus (U\cup \mathring K_{n+1})$ is compact and disjoint 
from $F_n$, there exists a 
positive number $a_n$ such that $a_nv_n$ is uniform  on $F_{n+2}\setminus (\mathring K_{n+1}\cup  U)$.
The locally finite sum
$v^+:= \sum_{n\geq 0} a_n v_n$
is a smooth causal  function which is null on $ J^-$, temporal outside of this set, and 
uniform on $J^+\setminus U$. Moreover, $v^+\circ \gamma$ takes all positive values if $\gamma$ is an inextensible causal curve.

 In a similar way we prove  the existence of a smooth causal function $v^-$ on $\tilde M$ which is null on $J^+$,
negative and temporal outside of this set,  uniform on $\mJ^-(H)\setminus U$, and such that $v^-\circ \gamma$ takes
all negative values for each inextensible causal curve $\gamma$.
The function $\tilde v:=v^++v^-$ is a smooth causal Cauchy function  on $\tilde M$ such that
 $\{\tilde v=0\}=H$.

In order to obtain the desired function $v$ on the whole manifold $M$, we just consider a function $h$ which
is equal to $1$ in a neighborhood of $\mD$ and null outside of $\tilde M$, and take $v=h\tilde v$. 
\end{proof}

\section{One spatial hypersurface}\label{sec-one}

We prove Theorem \ref{thm-one} and then give a proposition on the extension of  acausal compact sets 
to spatial hypersurfaces.

\begin{proof}[Proof of Theorem \ref{thm-one}.]
We first prove the existence of a smooth causal function $u$ which is null on $H$ and
$g$-uniform in a neighborhood $U$ of $H$, see Proposition \ref{prop-near} below.

Since $H$ is a Cauchy subset (see Corollary \ref{cor-sc} below), we can apply Proposition \ref{prop-far}, and get the existence of a smooth causal function $v$  which is null on $H$ and
uniform outside of $U$. We deduce that $\tau:= u+v$ is a uniform temporal function null on $H$.
\end{proof}

The first step is a variant of \cite[Proposition 5.1]{BS1}.

\begin{prop}\label{prop-near}
	Let $H\subset M$ be a spatial subset, $g$ be a Riemannian metric, and $V$ be a neighborhood of $H$. Given $\alpha\in ]0,1[$, there exists a  smooth causal function $u: M\lto [-1,1]$ with the following properties:
	\begin{itemize}
		\item
	$u_{|H}=0$.
	\item
	The function $u$ is $g$-uniform on $u^{-1}(]-\alpha,\alpha[)$ and temporal on  $u^{-1}(]-1,1[)$.
	\item
	The function $u$ is equal to $+1$ in a neighborhood of $\mJ^+(H)\setminus V$ and to  $-1$ in a neighborhood of $\mJ^-(H)\setminus V$.
	\end{itemize}
\end{prop}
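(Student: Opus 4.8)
The plan is to construct $u$ locally near $H$ and then patch. First I would use the spatial hypothesis: by definition there is an open neighborhood $\tilde M$ of $\mD$ in which $H$ is a smooth hypersurface with $T_xH \cap \mC(x)=\emptyset$ for all $x\in H$. Working in $\tilde M$, I would first build a smooth function $w$ defined near $H$ with $w_{|H}=0$, $dw_x$ vanishing nowhere on $H$ and $\ker dw_x = T_xH$, so that $dw_x\cdot v \neq 0$ for $(x,v)\in \mC(x)$; after possibly replacing $w$ by $-w$ and shrinking, $dw_x\cdot v>0$ on $\mC$ near $H$, i.e. $w$ is temporal in a neighborhood of $H$. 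By compactness-type arguments on the (locally compact) hypersurface and continuity of $\mC$, one gets a neighborhood $W\subset V\cap\tilde M$ of $H$ on which $dw\cdot v \geq c\,|v|^g$ for some continuous positive function $c$; rescaling $w$ by a suitable smooth positive function of $w$ alone (a reparametrisation $\eta\circ w$ with $\eta'$ large where needed) makes it $g$-uniform on a slightly smaller neighborhood $W'$. This is essentially the content of \cite[Proposition 5.1]{BS1}, which I would invoke or mimic.

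Next I would promote this local function to a global causal function taking the constant values $\pm 1$ away from $V$. Choose a smooth cutoff-type reparametrisation: pick $\alpha<\beta<1$ and a smooth nondecreasing $\chi:\RR\to[-1,1]$ with $\chi(s)=s$ for $|s|\leq\alpha$, $\chi'(s)>0$ for $|s|<1$, $\chi(s)=\pm1$ for $\pm s\geq \beta$. Then $\chi\circ(\eta\circ w)$ is temporal where $|\eta\circ w|<1$, equals $\eta\circ w$ hence is $g$-uniform where it lies in $(-\alpha,\alpha)$, and is locally constant $\pm1$ near the "ends" of $W'$. The remaining issue is that $w$ is only defined near $H$: I would set $u:=\pm1$ on $\mJ^\pm(H)$ outside a suitable neighborhood, $u:=\chi\circ\eta\circ w$ near $H$, and check these definitions agree on the overlap after choosing the neighborhoods so that $|\eta\circ w|\geq \beta$ (so $\chi\circ\eta\circ w=\pm1$) before one leaves the domain of $w$. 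To make this overlap matching work one uses that $w$ is a submersion near $H$ with spacelike level sets, so its sublevel/superlevel sets locally separate $\mJ^+(H)$ from $\mJ^-(H)$; concretely, one picks the neighborhood $W''\Subset W'$ and shows $\{w\geq \delta\}\cap W'$ is "ahead" of $H$ and $W'\setminus W''$ on the positive side already has $\eta\circ w\geq \beta$. The causality of the resulting global $u$ then follows because a causal curve either stays in the region where $u$ is temporal, or reaches a region where $u\equiv\pm1$, and once it exits into $\mJ^+(H)\setminus V$ (where $u\equiv 1$) it cannot come back to smaller values — here one needs that $H$ spatial implies $H$ acausal-like behavior, i.e. a causal curve crossing $H$ crosses it "once" in the relevant local picture, which is where the hypothesis $T_xH\cap\mC(x)=\emptyset$ is used again.

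The main obstacle I expect is precisely this patching: ensuring that the locally defined temporal, uniform function $w$ near $H$ can be glued with the constants $+1$ on $\mJ^+(H)\setminus V$ and $-1$ on $\mJ^-(H)\setminus V$ so that the result is (i) globally well-defined and smooth, (ii) still causal across the gluing region, and (iii) genuinely equal to $\pm1$ in a \emph{neighborhood} of $\mJ^\pm(H)\setminus V$, not merely on it. This requires choosing the nested neighborhoods $H\subset W''\Subset W'\Subset W\subset V$ and the reparametrisation $\eta$ carefully so that $\eta\circ w$ already saturates to $\pm1$ (via $\chi$) before reaching $\partial W'$, using that on $W'\setminus W''$ the function $w$ is bounded away from $0$ with a definite sign on each side of $H$ (which itself follows from $W'$ being chosen as a "collar" of $H$ via the flow of a causal vector field transverse to $H$, an object available since $H$ is spacelike). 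Once the neighborhoods and $\eta,\chi$ are fixed, the verification of the three bulleted properties is routine: uniformity on $u^{-1}((-\alpha,\alpha))$ because there $u=\eta\circ w$ up to the identity reparametrisation, temporality on $u^{-1}((-1,1))$ because $\chi'>0$ there and $\eta\circ w$ is temporal, and the boundary values by construction.
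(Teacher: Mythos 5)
Your overall strategy -- build a temporal defining function for $H$ in a collar, saturate it to $\pm 1$ by a cutoff, and glue with the constants $\pm 1$ on $\mJ^{\pm}(H)\setminus V$ -- is the same as the paper's, and your discussion of the patching (nested neighborhoods, agreement on overlaps, causality of the glued function) is in the right spirit; the paper does this with a partition of unity subordinate to $\hat M\cup W^+\cup W^-\cup(M\setminus\mD)$, which amounts to the same thing.

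There is, however, a genuine gap in the uniformity step. You propose to first obtain $dw_x\cdot v\geq c(x)\,|v|^g_x$ on a neighborhood $W$ of $H$ for some continuous positive function $c$, and then achieve $g$-uniformity by a reparametrisation $\eta\circ w$ with $\eta'$ ``large where needed''. But $\eta'$ can only depend on the value of $w$, whereas $c$ is a function on $W$: if $H$ is noncompact (as it is in all the applications, e.g.\ the hypersurfaces of Lemma \ref{lem-osc}, which equal $N^{\pm}$ at infinity), $\inf_{\{w=s\}\cap W}c$ may be $0$ for every $s$, and then no choice of $\eta$ gives $\eta'(w(x))\,c(x)\geq 1$ on a slab $\{|w|<s_0\}$. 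Moreover the set $u^{-1}(]-\alpha,\alpha[)$ for $u=\chi\circ\eta\circ w$ is exactly such a slab, so you cannot escape the problem by shrinking it: it does not narrow as you go to infinity along $H$. The paper's proof resolves precisely this point by working in a tubular neighborhood $\hat H\times\Rm$ and taking $\hat u(y,z)=\phi(z/f(y))$, where the collar width $f(y)$ is a positive function on the hypersurface chosen so small that $\|df_y\|+f(y)\leq\epsilon(y)\leq\delta(y)$, $\delta$ measuring the opening of the cones. Then on $\hat u^{-1}(]-\alpha,\alpha[)$ one gets $d\hat u\cdot v\geq \tfrac{\delta(y)}{f(y)}\|v\|\geq\|v\|$, i.e.\ the level sets of $\hat u$ themselves bend to follow a collar that narrows at infinity; note also that this introduces the cross-term $\tfrac{z}{f(y)}df_y\cdot v_y$, which is why the bound on $\|df_y\|$ (and not just on $f$) is needed to preserve temporality. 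Your construction, as written, produces a function that is temporal near $H$ but is $g$-uniform on $u^{-1}(]-\alpha,\alpha[)$ only when $H$ is compact.
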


\begin{proof}
We consider a spatial hypersurface (of $M$)  $\hat H\subset H$ which is smooth and contains $H\cap \mD$, 
and we work first in a tubular neighborhood $\hat M\approx\hat  H\times \Rm\subset V\subset M$, 
such that $(\hat H \times [-1,1]) \cap \mD$ is closed in $M$.  

There exists a smooth positive function $y\mapsto \delta(y)$ on $\hat H$ such that 
$$ \mC(y,0)\subset \{(v_y,v_z): v_z\geq 3\delta(y) \|v_y\|, v_z \geq 3\delta (y) \|(v_y,v_z)\|\}
$$
for each $y\in\hat  H$. Here we have used the notation $\|v_y\|:= \|(v_y,0)\|_{(y,z)}$, where the norms are measured with respect to the metric $g$ on $M$.
Then, there exists a smooth positive function $\epsilon : H\lto ]0,1[$ such that
$$ \mC(y,z)\subset \{(v_y,v_z): v_z\geq 2\delta(y) \|v_y\|, v_z \geq 2\delta (y) \|(v_y,v_z)\|\}
$$
provided $|z|\leq \epsilon(y)$. 
We  also assume that $\epsilon \leq \delta$.

Let $f:\hat H\lto \Rm$ be a smooth positive function such that 
$\|df_y\|+f(y)\leq \epsilon(y)$ for all {$y\in  \hat H$},
(see for example  \cite[Lemma 5.4]{BS1} for the existence of such a function).
We set 
$$\hat u(y,z)=\phi(z/f(y)),
$$
where $\phi:\Rm\lto[-1,1]$ is a smooth nondecreasing function which has positive derivative on $]-1,1[$
and is equal to $1$ on $[1,\infty)$ and to $-1$ on $(-\infty,-1]$. We moreover assume that 
$\phi(t)=t$ on $[-\alpha,\alpha]$.

On the open set 
$ \Omega:=\{(y,z): |z|<f(y)\}=\hat u ^{-1}(]-1,1[)$, we have
$$d\hat u_{(y,z)}\cdot(v_y,v_z)=\frac{\phi'(z/f(y))}{f(y)}
\big(
v_z-\frac{z}{f(y)}df_y\cdot v_y
\big)\geq \frac{\phi'(z/f(y))}{2f(y)}v_z
$$ 
for $(v_y,v_z)\in  \mC(y,z)$ since
$
|(z/f(y))df_y\cdot v_y|\leq \delta(y) \|v_y\|\leq v_z/2.
$
This implies that $\hat u$ is temporal on  $\Omega$ .
Outside of this open set, $\hat u$ is locally constant, hence it is a causal function.

Finally, on $\hat u^{-1}(]-\alpha,\alpha[)$, we have $\phi'(z/f(y))=1$ hence
$$d \hat u_{(y,z)}\cdot (v_y,v_z)\geq  \frac{1}{2f(y)} v_z\geq \frac{\delta(y)}{f(y)} \|(v_y,v_z)\|_{(y,z)}
\geq  \|(v_y,v_z)\|_{(y,z)}$$
for each $(v_y,v_z)\in \mC(y,z)$. In other words, the causal  function  $\hat u$ is uniform on 
$\hat u^{-1}(]-\alpha,\alpha[)$.

Let us now extend the function $\hat u$ from $\hat M$ to $M$. The domain $\mD$ of $\mC$ is the union of 
the closed sets $\mJ^{\pm}(H)$, and the intersection of these sets is $H\cap \mD$. Let $F^{\pm}$ be the disjoint closed sets  $\mJ^{\pm}(H) \setminus \hat M$. Let $W^{\pm}$ be  open sets containing $F^{\pm}$, 
which are disjoint from each other, and also disjoint from  
$(\hat H\times[-1,1])\cap \mD\subset \hat M$ and from the closure $\bar H$ of $H$ (this is possible since $\bar H \cap \mD= H\cap \mD$). 
We consider a partition  of the unity  $(a,b,c,d)$ associated to the finite open covering 
$M=\hat M\cup  W^+\cup W^-\cup(M\setminus \mD)$ and set
$u:= a\hat u +b-c$.
\end{proof}

This proposition has the following  corollary :

\begin{cor}\label{cor-sc}
	Each spatial subset  is a Cauchy subset.
\end{cor}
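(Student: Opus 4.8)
The plan is to produce, from a spatial subset $H$, the data $(\tilde M, U^+, U^-)$ required by Definition \ref{def1}. The natural guess is $U^{\pm}:=\mathring{\mJ^{\pm}(H)}\setminus H$ intersected with a suitable neighborhood $\tilde M$ of $\mD$, but the cleanest route is to use the function $u$ already constructed. Indeed, apply Proposition \ref{prop-near} to $H$ with any auxiliary metric $g$ and any neighborhood $V$ of $H$ (say $V=M$), obtaining a smooth causal function $u\colon M\to[-1,1]$ with $u_{|H}=0$, temporal on $u^{-1}(]-1,1[)$, equal to $+1$ near $\mJ^+(H)\setminus V$ and $-1$ near $\mJ^-(H)\setminus V$. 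Set $\tilde M$ to be the neighborhood of $\mD$ on which $H\cap\tilde M$ is a smooth hypersurface and $T_xH\cap\mC(x)=\emptyset$ for $x\in H$ (this exists by the definition of ``spatial''), and put $U^{\pm}:=\{x\in\tilde M: \pm u(x)>0\}$. By construction $U^+, U^-, H\cap\tilde M$ are pairwise disjoint; one must check they cover $\tilde M$, i.e. that $u^{-1}(0)\cap\tilde M=H\cap\tilde M$. The inclusion $\supseteq$ is immediate. For $\subseteq$, note $\mD=\mJ^+(H)\cup\mJ^-(H)$ (since $H$ meets every inextensible causal curve), $u>0$ on a neighborhood of $\mJ^+(H)\setminus H$ and $u<0$ on a neighborhood of $\mJ^-(H)\setminus H$ inside $\tilde M$ — this uses that $u$ is temporal on $u^{-1}(]-1,1[)$ together with the fact that $H$ is spacelike, so $u$ cannot vanish on $\mJ^{\pm}(H)$ away from $H$; shrink $\tilde M$ if necessary so that this holds.

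Next I would verify the curve condition. Let $\gamma\colon\Rm\to M$ be an inextensible causal curve. Since $H$ is spatial, $\gamma$ meets $H$ at exactly one point; let $[a,b]:=\{t:\gamma(t)\in H\}$ (a priori a single point, but the definition allows a nondegenerate closed interval — spacelikeness of $H$ actually forces $a=b$, though we need not use this). Because $u\circ\gamma$ is non-decreasing (causality of $u$) and $u(\gamma(a))=0$, we get $u\circ\gamma\leq 0$ on $]-\infty,a]$ and $u\circ\gamma\geq 0$ on $[b,\infty)$. Moreover $u$ is temporal, hence $u\circ\gamma$ is strictly increasing wherever $\gamma$ stays in $u^{-1}(]-1,1[)$; I would argue that for $t$ slightly less than $a$ one has $u(\gamma(t))<0$ (otherwise $u\circ\gamma$ is constant $=0$ on an interval ending at $a$, contradicting that $u$ is temporal and $\dot\gamma\in\mC$ there, as $\gamma$ cannot be contained in the hypersurface $H$). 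Thus $\gamma(]-\infty,a[)\subset U^-$ and $\gamma(]b,\infty[)\subset U^+$; and $\gamma([a,b])\subset H$ by definition of $[a,b]$. Finiteness of $a,b$ is automatic since they are a single point of $\Rm$.

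The main obstacle is the bookkeeping near the boundary of $\mD$ and the possibility of degenerate points: one must be careful that $U^{\pm}$ are genuinely \emph{open in $\tilde M$} and that their union with $H\cap\tilde M$ is \emph{all} of $\tilde M$ (not just all of $\tilde M\cap\mD$). This is why one works in a neighborhood $\tilde M$ of $\mD$ rather than on all of $M$: on $\tilde M\setminus\mD$ there are no causal vectors, so any continuous choice splitting $\tilde M$ into ``$u>0$'', ``$u<0$'', ``$H$'' works, and the only real content is on $\mD$, where $\mD=\mJ^+(H)\cup\mJ^-(H)$ and the sign of $u$ is controlled by Proposition \ref{prop-near}. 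A secondary subtlety is confirming $\bar H\cap\mD=H\cap\mD$ is what lets us separate $H$ from $\mJ^{\pm}(H)\setminus\hat M$ cleanly, exactly as in the proof of Proposition \ref{prop-near}; I would invoke that separation verbatim. Once these points are checked, the triple $(\tilde M,U^+,U^-)$ satisfies Definition \ref{def1}, so $H$ is a Cauchy subset.
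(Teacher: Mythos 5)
Your construction is the same as the paper's: take the function $u$ from Proposition \ref{prop-near}, find a neighborhood $\tilde M$ of $\mD$ on which $u^{-1}(0)$ coincides with $H$, and set $U^{\pm}=\{x\in\tilde M:\pm u(x)>0\}$; your verification of the curve condition is correct. The one place where your argument has a genuine gap is the partition property, i.e.\ the claim that $u^{-1}(0)\cap\tilde M=H\cap\tilde M$ for a suitable $\tilde M$. Your argument on $\mD$ itself is fine: $\mD=\mJ^+(H)\cup\mJ^-(H)$, and $u$ is strictly positive on $\mJ^+(H)\setminus H$ and strictly negative on $\mJ^-(H)\setminus H$ because $u$ is causal, vanishes on $H$ and is temporal on $u^{-1}(]-1,1[)$. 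But the sentence ``on $\tilde M\setminus\mD$ there are no causal vectors, so any continuous choice splitting $\tilde M$ into $u>0$, $u<0$, $H$ works'' does not address the difficulty: the problem is precisely that $\{u>0\}$, $\{u<0\}$ and $H$ might fail to split $\tilde M$, since $u^{-1}(0)$ could a priori contain points of $\tilde M\setminus\mD$ accumulating on $H\cap\mD$ without belonging to $H$. The absence of causal vectors off $\mD$ makes the curve condition vacuous there, but it says nothing about the partition requirement of Definition \ref{def1}, and ``shrink $\tilde M$ if necessary'' is exactly the assertion that needs proof.

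The missing step is short but has real content. At each $x\in H\cap\mD$ one has $du_x\cdot v>0$ for every $v\in\mC(x)\neq\emptyset$, hence $du_x\neq0$, so $u^{-1}(0)$ is a smooth hypersurface near $x$; since it contains the hypersurface $H$ near $x$ (here one uses that $H$ is spacelike and that $\bar H\cap\mD=H\cap\mD$), the two coincide on a neighborhood of $x$. Combining this with the fact that $u\neq 0$ on $\mD\setminus H$ and the continuity of $u$, every point of $\mD$ has a neighborhood $W$ with $u^{-1}(0)\cap W\subset H$, and the union of these neighborhoods is the required $\tilde M$. This is exactly the content of the paper's proof (phrased there via a sequence $x_n\to x\in\mD$ with $u(x_n)=0$). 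With this step inserted, your proof is complete and coincides with the paper's.
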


\begin{proof}
We consider the function $u$ obtained in the  Proposition \ref{prop-near}.
There exists a neighborhood $\tilde M$ of $\mD$ such that $\tilde M\cap u^{-1}(0)=\tilde M\cap H$.
Indeed, let $x_n$ be a sequence of points converging to $x\in \mD$ and such that $u(x_n)=0$.
Since $u(x)=0$ and $x\in \mD$, we have $x\in H$.
In the neighborhood of $x$, the function $u$ is non-degenerate, hence, locally,   $u^{-1}(0)=H$. This implies that 
$x_n\in H$ for large $n$, proving the claim.

We define $U^+:= \{x\in \tilde M : u(x)>0\}$, $U^-:= \{x\in \tilde M : u(x)<0\}$. They satisfy all the requirements of the definition of a Cauchy subset.
\end{proof}

Theorem \ref{thm-one} is proved.
Let us also mention :

\begin{lem}\label{lem-mod}
	Let $\tau$ be a Cauchy temporal function, and let $a\in \RR$. Let $g$ be a Riemannian metric.
	Then there exists a Cauchy temporal function $\tau_1$ such that $\tau_1=\tau$
	on $\{\tau\geq a+1/3\} \cup \{\tau=a\}\cup \{\tau \leq a-1/3\}$, and which is 
	$g$-uniform on $\tau_1^{-1}(]a-1/10, a+1/10[)$.
\end{lem}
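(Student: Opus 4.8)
The plan is to modify $\tau$ only in the slab $S:=\tau^{-1}([a-1/3,a+1/3])$, leaving it untouched on $\{\tau\ge a+1/3\}\cup\{\tau\le a-1/3\}$, while arranging that the new function agrees with $\tau$ on the level set $\{\tau=a\}$ and is $g$-uniform on a thinner slab around that level. First I would invoke Theorem \ref{thm-one}: the level set $N:=\{\tau=a\}$ is spatial (being a level of a Cauchy temporal function), so there exists a $g$-uniform temporal function $w$ with $w|_N\equiv 0$. The function $w$ is uniform everywhere, in particular on $N$, and it vanishes exactly where we want our correction to vanish. The idea is then to form $\tau_1 := \tau + \theta\cdot w$ where $\theta$ is a suitable cutoff depending on $\tau$, designed so that $\theta$ is supported in $S$, identically $1$ on the thin slab $\tau^{-1}([a-1/10,a+1/10])$, and $\theta$ vanishes on $N$ — wait, that cannot be, since $N$ sits inside the thin slab.

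So instead the correct construction is: keep $w$ as the uniform building block, but multiply by a cutoff in $\tau$ that equals $1$ on $[a-1/10,a+1/10]$ and has support in $(a-1/3,a+1/3)$; since $w$ vanishes on $N=\{\tau=a\}$, the product $\theta(\tau)\,w$ also vanishes on $N$, so $\tau_1:=\tau+\theta(\tau)\,w$ satisfies $\tau_1=\tau$ on $N$ automatically, and $\tau_1=\tau$ on $\{\tau\ge a+1/3\}\cup\{\tau\le a-1/3\}$ because $\theta(\tau)=0$ there. What remains is to check that $\tau_1$ is still temporal everywhere and $g$-uniform on the thin slab. For $(x,v)\in\mC$ compute
\[
d(\tau_1)_x\cdot v = (1+\theta'(\tau(x))\,w(x))\,d\tau_x\cdot v + \theta(\tau(x))\,dw_x\cdot v.
\]
On the thin slab $\theta=1,\theta'=0$, so this reduces to $d\tau_x\cdot v + dw_x\cdot v \ge d\tau_x\cdot v + |v|^g_x \ge |v|^g_x$, giving $g$-uniformity there for free. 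Elsewhere in $S$, the cross term $\theta'(\tau)\,w\,d\tau_x\cdot v$ is the dangerous one: if $w$ is large and $\theta'<0$ it could overwhelm $d\tau_x\cdot v$. The fix is to rescale: replace $w$ by $\lambda w$ for a sufficiently small constant $\lambda\in]0,1[$ — but that destroys uniformity. Better: choose the cutoff $\theta$ so that $|\theta'|$ is controlled relative to the *size of* $w$ on $S$; more precisely first pick $w$, then choose $\theta$ with $\|\theta'\|_\infty \cdot \sup_S|w| \le 1/2$, possible by stretching the transition regions of $\theta$ (this only uses that $w$ is bounded on the compact-in-the-relevant-sense set, or we localize). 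Then $1+\theta'(\tau)w \ge 1/2 > 0$, so the first term stays positive, and on the part of $S$ outside the thin slab $\theta$ can be taken $\ge 0$ with $dw_x\cdot v>0$, keeping $d(\tau_1)_x\cdot v>0$: temporality is preserved. Finally, $\tau_1$ is Cauchy because it differs from the Cauchy function $\tau$ by a bounded function supported in a slab, hence $\tau_1\circ\gamma$ is still onto $\RR$ along inextensible causal curves (it agrees with $\tau$ outside $S$).

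The main obstacle is the interplay in the transition regions of the cutoff, i.e. controlling the cross term $\theta'(\tau)\,w\,d\tau\cdot v$ so that $\tau_1$ remains temporal there; this is handled by decoupling the choices — fix the uniform function $w$ first, bound $|w|$ on the slab, and only then choose the cutoff $\theta$ with derivative small enough. One subtlety to watch: $w$ need not be bounded on all of $M$, but $\theta(\tau)$ restricts attention to $S=\tau^{-1}([a-1/3,a+1/3])$, on which one can still arrange the needed bound either by using that $\tau$ properly exhausts $\RR$ (so $S$ behaves well) or by further modulating $\theta$'s derivative by a function of position; I would write the estimate so that $\|\theta'\|_\infty$ is replaced by a positive function of $\tau(x)$ dominating $\theta'/w$ pointwise, which removes the boundedness worry entirely.
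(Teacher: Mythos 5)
There is a genuine gap, and it sits exactly at the point you flag as ``the main obstacle.'' Your construction $\tau_1=\tau+\theta(\tau)\,w$ requires the bound $\|\theta'\|_\infty\cdot\sup_S|w|\le 1/2$ on the slab $S=\tau^{-1}([a-1/3,a+1/3])$, but the function $w$ produced by Theorem \ref{thm-one} is a \emph{globally} $g$-uniform temporal function vanishing on $N=\{\tau=a\}$, and such a function is in general unbounded on $S$: uniformity forces $w(x)\ge \mathrm{length}_g(\gamma)$ for any causal curve $\gamma$ from $N$ to $x$, and the causal $g$-distance from $N$ to $\{\tau=a+1/3\}$ need not be bounded (the slab can get arbitrarily ``thick'' in the metric $g$ near infinity), so \emph{no} admissible $w$ is bounded on $S$ in general. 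Your first fallback --- choosing $\theta'$ as a function of $\tau(x)$ dominating $1/|w(x)|$ --- is not available, because $\sup\{|w(x)| : \tau(x)=t\}$ can be infinite for $t$ in the transition range, so no function of $t$ alone controls the product. Your second fallback, a position-dependent cutoff $\Theta(x)$, reintroduces an uncontrolled term $w\, d\Theta_x\cdot v$ of no definite sign; making it harmless forces $|d\Theta|_g\le \Theta/|w|$, i.e.\ $|d\log\Theta|_g\le 1/|w|$, and since $|w|$ is bounded below away from $N$ the integral of $1/|w|$ across the outer transition region is finite, so $\log\Theta$ cannot reach $-\infty$ and $\Theta$ cannot actually vanish at the boundary of the slab. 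So neither repair closes the gap, and composing $w$ with a bounded reparametrization $\psi$ destroys uniformity exactly where you may still need it.

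The paper's proof circumvents all of this by not cutting off a globally uniform function at all. Proposition \ref{prop-near} (applied in the strip $\tau^{-1}(]a-1/3,a+1/3[)$ with the metric $16g$) directly produces a \emph{bounded} causal function $u:M\to[-1,1]$ that vanishes on $N$, is $16g$-uniform on $u^{-1}(]-3/4,3/4[)$, equals $\pm1$ on $\{\pm(\tau-a)\ge 1/3\}$, and is \emph{locally constant} outside the region where it is temporal --- so $u$ is causal everywhere and there is no cross term to control when one adds it to a temporal function. The matching with $\tau$ at the edges of the slab is then achieved not by a cutoff but by a reparametrization: $\tau_1:=u/4+\eta\circ\tau$ with $\eta$ an increasing diffeomorphism fixing $a$ and shifting by $\mp1/4$ on $\{\pm(\tau-a)\ge1/3\}$, which absorbs the values $\pm1/4$ of $u/4$ there. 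In short, the boundedness and the ``cutoff'' must be built into the uniform-near-$N$ building block itself (this is what the shrinking width function in the proof of Proposition \ref{prop-near} accomplishes); your plan is essentially one proposition away from working, but Theorem \ref{thm-one} is the wrong input. A secondary, fixable point: you verify uniformity on $\tau^{-1}(]a-1/10,a+1/10[)$, whereas the lemma asks for it on $\tau_1^{-1}(]a-1/10,a+1/10[)$; one must check the inclusion of the latter in the former, e.g.\ using that the correction term has the same sign as $\tau-a$ on the domain.
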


\begin{proof}
We first apply Proposition  \ref{prop-near} in the hyperbolic strip $\tau^{-1}(]a-1/3,a+1/3[)$ to the spatial set  $H$ and the metric $16g$. We get a smooth
causal function $u: M\lto [-1,1]$ which is 
$16g$-uniform on $u^{-1} (]a-3/4,a+ 3/4[)$, which is equal to $0$ on $H$, and which is equal to $1$ on $\{\tau \geq a+1/3\}$ and to $-1$ on 
$\{\tau \leq a-1/3\}$. 

Second, we consider a smooth increasing diffeomorphism  $\eta: \Rm\lto \Rm$  such that
$\eta(a)=a$,  $\eta (t)= t-1/4$ on $[a+1/3, \infty)$ and $\eta(t)=t+1/4$ on
$(-\infty, a-1/3]$.  

Finally we set $\tau_1=u/4+\eta \circ \tau$. This function is equal to $a$ on $\{\tau= a\}$,
it is equal to $\tau$ on  $\{\tau\geq a+1/3\}\cup \{\tau \leq a-1/3\}$
it is $g$-uniform on
$\tau_1^{-1}(]a-1/10, a+1/10[)\subset u^{-1}(]a-3/4,a+3/4[)$
because $u/4$ is.
\end{proof}

It is not much more difficult to find a Cauchy temporal function with prescribed values on 
two spatial hypersurfaces than on one spatial  hypersurface. More precisely:

\begin{prop}\label{prop-twocauchy}
	Let $H_0,H_1$ be two spatial subsets such that  $H_1\cap\mJ^-(H_0)=\emptyset$, and let $g$ be a complete Riemannian metric on $M$.
	Then there exists a Cauchy temporal function $\tau$ such that  $\tau=i$ on $H_i$ for $i=0,1$,
	and which is $g$-uniform in $\{\tau\geq 9/10\}$ and in $\{\tau\leq 1/10\}$.
\end{prop}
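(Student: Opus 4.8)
The plan is to build $\tau$ by gluing together two uniform temporal functions, one adapted to $H_0$ and one to $H_1$, using the separation hypothesis $H_1\cap\mJ^-(H_0)=\emptyset$ to control the overlap region. First I would record the consequence of the hypothesis: since $H_1\cap\mJ^-(H_0)=\emptyset$ and both sets are spatial, we have $\mJ^+(H_1)\cap\mJ^-(H_0)=\emptyset$ as well (any causal curve from a point of $H_1$ through $\mJ^-(H_0)$ would meet $H_0$, forcing a point of $H_1$ in $\mJ^-(H_0)$); equivalently $H_0\subset M\setminus\mJ^+(H_1)$ and $H_1\subset M\setminus\mJ^-(H_0)$, and $\mJ^+(H_0)$ and $\mJ^-(H_1)$ need not be disjoint but their ``bad'' parts are. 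More to the point, $H_0$ lies strictly in the past of $H_1$ in the sense that one can find disjoint closed neighborhoods of the relevant past/future sets.

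Next I would invoke Theorem \ref{thm-one} twice: there is a $4g$-uniform temporal function $\sigma_0$ vanishing on $H_0$, and a $4g$-uniform temporal function $\sigma_1$ vanishing on $H_1$. Each $\sigma_i$ is in particular a Cauchy temporal function (it is uniform for a complete metric, by the lemma after the definition of completely uniform, since $4g$ is complete when $g$ is). The idea is then to reparametrize and interpolate. Choose smooth increasing functions $\psi_0,\psi_1\colon\RR\to\RR$ with $\psi_0(0)=0$, $\psi_1(0)=1$, chosen so that $\psi_0\circ\sigma_0$ and $\psi_1\circ\sigma_1$ are still $g$-uniform (composing with a function whose derivative is bounded below by $1$ preserves $g$-uniformity; here one needs derivative $\geq 1$, which is compatible with $4g$-uniformity losing a factor) and so that on the region ``between'' $H_0$ and $H_1$ the two functions are comparable. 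Then set $\tau := \max(\psi_0\circ\sigma_0,\ \psi_1\circ\sigma_1 - \text{something})$ or, better, a smooth convex combination $\tau := \chi\cdot(\psi_1\circ\sigma_1) + (1-\chi)\cdot(\psi_0\circ\sigma_0)$ where $\chi$ is a smooth causal function valued in $[0,1]$, equal to $1$ past $H_1$'s relevant side and $0$ past $H_0$'s. A convex combination of two temporal functions is temporal, and a convex combination of two $g$-uniform functions is $g$-uniform; if moreover $\chi$ itself is causal and we arrange $\psi_1\circ\sigma_1\geq\psi_0\circ\sigma_0$ on the support of $d\chi$, then $d\tau = \chi\,d(\psi_1\sigma_1)+(1-\chi)\,d(\psi_0\sigma_0)+(\psi_1\sigma_1-\psi_0\sigma_0)\,d\chi$ still pairs positively (indeed $\geq|v|^g$) with cone vectors. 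One arranges $\tau=0$ on $H_0$ by having $\chi\equiv 0$ near $H_0$ (possible since $H_0\cap\mJ^+(H_1)=\emptyset$, so $\chi$ can be taken to vanish on a neighborhood of $\mJ^-(H_0)$ minus far-future stuff) and $\tau=1$ on $H_1$ by having $\chi\equiv1$ near $H_1$. Finally rescale globally so that $\tau$ becomes $g$-uniform on $\{\tau\geq 9/10\}$ and $\{\tau\leq 1/10\}$: outside a compact-in-each-causal-diamond region $\tau$ equals $\psi_1\circ\sigma_1$ resp. $\psi_0\circ\sigma_0$ which are $g$-uniform, and one adjusts the cutoff levels $9/10$, $1/10$ so these coincide with where the interpolation has already stabilized.

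The main obstacle, I expect, is making the cutoff function $\chi$ simultaneously causal and supported where $\psi_1\circ\sigma_1 \geq \psi_0\circ\sigma_0$ (so that the extra term $(\psi_1\sigma_1-\psi_0\sigma_0)\,d\chi$ has the right sign) while still having $\chi\equiv0$ near $H_0$ and $\chi\equiv1$ near $H_1$. This is exactly where the hypothesis $H_1\cap\mJ^-(H_0)=\emptyset$ must be used quantitatively: it guarantees that on a neighborhood of the ``no man's land'' between the two hypersurfaces one genuinely has $\sigma_0>0$ and $\sigma_1<1$ with a definite gap after reparametrization, so a level set $\{\sigma_0=\text{const}\}$ or $\{\sigma_1 = \text{const}\}$ in that region is spacelike and can serve to define $\chi$ as a function of $\sigma_0$ (which is automatically causal). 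Concretely I would instead take $\chi := \rho\circ\sigma_0$ for a suitable smooth nondecreasing $\rho\colon\RR\to[0,1]$ equal to $0$ on $(-\infty,\epsilon]$ and $1$ on $[K,\infty)$ where $\epsilon>0$ is chosen small enough that $\{0\leq\sigma_0\leq\epsilon\}$ contains a neighborhood of $H_0$ disjoint from $H_1$, and $K$ large enough that $\{\sigma_0\geq K\}\supset H_1$ (possible because $\sigma_0$ is Cauchy hence unbounded above along causal curves and $H_1\subset\mJ^+(H_0)$-far-side; here one may also need to first replace $\sigma_0$ by $\sigma_0$ plus a correction supported near $H_1$ to ensure $\sigma_0$ is constant $\geq K$ on $H_1$, which is a standard application of Lemma \ref{lem-F}-type partition arguments). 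Then $\chi$ is causal because $\rho$ is nondecreasing and $\sigma_0$ is causal, it is locally constant (hence $d\chi$ supported) in the band $\epsilon\leq\sigma_0\leq K$ where we have arranged $\psi_1\circ\sigma_1\geq\psi_0\circ\sigma_0$, and the construction closes up. The remaining verifications — that $\tau$ is Cauchy (it is squeezed between two Cauchy functions up to reparametrization, or directly: it agrees with a $g$-uniform, $g$ complete, function outside a bounded band and $g$-uniformity there gives completeness) and that the prescribed levels and uniformity-on-the-ends hold — are then routine bookkeeping.
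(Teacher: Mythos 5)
Your overall strategy (two applications of Theorem \ref{thm-one}, one adapted to each hypersurface, glued by a causal interpolation) is close in spirit to the paper's, but the gluing step has a genuine gap. The cutoff $\chi=\rho\circ\sigma_0$ with $\rho\equiv 0$ on $(-\infty,\epsilon]$ and $\rho\equiv 1$ on $[K,\infty)$ requires $\epsilon\leq\sigma_0\leq$ (something finite) to separate $H_0$ from $H_1$, i.e.\ it requires $\sigma_0$ to be bounded away from $0$ \emph{and} bounded above on $H_1$. Neither holds in general: the hypothesis $H_1\cap\mJ^-(H_0)=\emptyset$ only gives $\sigma_0>0$ on $H_1$, and the separation can degenerate at spatial infinity. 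In the Minkowski plane with $H_0=\{t=0\}$ and $H_1=\{t=e^{-x^2}\}$ (a spatial hypersurface disjoint from $\mJ^-(H_0)$), one has $\inf_{H_1}\sigma_0=0$ for $\sigma_0=t$, so for every $\epsilon>0$ the region $\{\sigma_0\leq\epsilon\}$ meets $H_1$, and there $\chi=0$ forces $\tau=\psi_0\circ\sigma_0\approx 0\neq 1$. Your claim that the hypothesis gives ``a definite gap after reparametrization'' is therefore false, and the fallback you offer --- modifying $\sigma_0$ by a correction near $H_1$ so that it becomes constant $\geq K$ on $H_1$ while remaining temporal --- is essentially the statement being proved (a temporal function with two prescribed level sets), so the argument is circular at exactly the point where the hypothesis must be used.

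The paper circumvents this by never comparing the two functions on a common band. It applies Theorem \ref{thm-one} to the \emph{restricted} globally hyperbolic cone field on the open set $M\setminus\mJ^-(H_0)$, obtaining a $4g$-uniform temporal function there equal to $1$ on $H_1$, and then uses the $\eta$-composition device from (the proof of) Lemma \ref{lem-F}: an increasing diffeomorphism $\eta\colon\RR\to\,]0,\infty[$ with $\eta(t)=t$ for $t\geq 1/2$, decaying so fast at $-\infty$ that $\eta\circ\tilde\tau_1$ extends by $0$ to a smooth causal function $\tau_1$ on all of $M$. Repeating on $\{\tau_1<3/4\}$ for $H_0$ gives $\tau_2$, and $\tau=(\tau_1+\tau_2)/2$ is temporal because at every point at least one summand is. This flattening-to-a-constant across the boundary of a globally hyperbolic open subset is the mechanism that replaces your quantitative gap; if you want to keep your interpolation framework, you would need to import exactly this ingredient (or otherwise produce a causal $\chi$ with $\chi\equiv 0$ on $\mJ^-(H_0)$ and $\chi\equiv 1$ on $\mJ^+(H_1)$, which again amounts to Lemma \ref{lem-F} applied to the past set $\mJ^-(H_0)$ rather than to a level band of $\sigma_0$).
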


In view of this result, the remaining difficulty in the proof of Theorem \ref{thm-two-unif}
is to interpolate between $H_0$ and $H_1$ in a uniform way. This will be done in Section 
\ref{sec-two}.

\begin{proof}
The open set $M\setminus \mJ^-(H_0)$ is globally hyperbolic and contains $H_1$.
As a consequence, there exists a $4g$-uniform temporal function 
$\tilde \tau_1:M\setminus \mJ^-(H_0)\lto \Rm$ which is equal to $1$ on $H_1$. 
In view of (the proof of)  Lemma \ref{lem-F}, there exists a smooth  function
$\eta: \Rm \lto ]0, \infty)$ with positive derivative such that $\eta (t)=t$ for $t\geq  1/2$ and such that  $\eta\circ \tilde \tau_1$ extends (with the value $0$) to a smooth function on $M$,
that will be denoted by $\tau_1$.

By a similar method in the globally hyperbolic manifold $\{\tau_1<3/4\}$, there 
exists a smooth function $\tau_2: M\lto (-\infty, 1]$ which is equal to 
$1$ on $\{\tau_1\geq 3/4\}$, temporal in  $\{\tau_1<3/4\}$, null on $H_0$,  and $4g$-uniform in 
$\{\tau_2 \leq 1/2\}$.

The function  $\tau:= (\tau_1+\tau_2)/2$ is temporal, it is $g$-uniform
in $\{\tau \geq 9/10\}\subset \{\tau_1\geq 4/5\}$ because $\tau_1/2$ is $g$-uniform on this set,
and it is $g$-uniform on $\{\tau\leq 1/10\}\subset \{\tau_2\leq 1/5\}$ because $\tau_2/2$
is $g$-uniform on this set.
\end{proof}

We finish this section with an extension result which is a  generalization of \cite[Theorem 1.1]{besa4}.

\begin{prop}\label{prop-ext}
	Let $\mC$ be a globally hyperbolic cone field without degenerate points, and let 
	$\Sigma$ be a  spacelike hypersurface. 
	Then each compact subset $K\subset \Sigma$
	is contained in a spatial hypersurface  $H$.
	
	Moreover, if $N^-, N^+$ are two spatial sets such that
	$\Sigma \subset  \mJ^+(N^- )\cap \mJ^-(N^+)$, the  spatial 
	hypersurface $H$ containing $K$ can be chosen inside 
	$
	\mJ^+(N^-)\cap \mJ^-(N^+)
	$
	and equal to $N^-$ at infinity.
\end{prop}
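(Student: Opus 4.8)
The plan is to reduce the statement to the one-hypersurface existence result (Theorem \ref{thm-one}, or rather its proof via Proposition \ref{prop-near}) by a careful cut-and-paste argument. First I would enlarge $K$ slightly inside $\Sigma$ to a compact set $K'$ with $K\subset \mathring K'$, and produce a local $g$-uniform temporal function defined on a neighborhood $W$ of $K'$ whose zero level set inside $W$ coincides with $\Sigma\cap W$; this is exactly the content of the construction in Proposition \ref{prop-near}, applied in a tubular neighborhood $\hat M\approx (\Sigma\cap W)\times\RR$ of $\Sigma$, noting that since $\mC$ has no degenerate points everything is genuinely local there. Call this function $u$, normalized so that $u$ is temporal on $u^{-1}(]-1,1[)$, equals $\pm 1$ outside $W$ near $\mJ^\pm(\Sigma)$, and is $g$-uniform near its zero set.

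Next I would handle the prescribed behavior at infinity. Since $N^-$ is spatial, Theorem \ref{thm-one} provides a $g$-uniform temporal function $\tau^-$ with $\tau^-|_{N^-}=0$, so that $N^-=\{\tau^-=0\}$ is a Cauchy hypersurface for $\mC$; similarly $N^+=\{\tau^+=0\}$. The region $R:=\mJ^+(N^-)\cap\mJ^-(N^+)$ is where we are allowed to work, and $\Sigma\subset R$ by hypothesis. The idea is to take $H$ to be a hypersurface which agrees with the graph built from $u$ over the compact piece near $K$, and which, away from $K$, peels off to coincide with $N^-$ (staying inside $R$). Concretely, I would choose a smooth cutoff $\chi$ supported in $W$ and equal to $1$ near $K$, and define $H$ as the zero set of a function of the form $w:=\chi\,u+(1-\chi)\,\rho(\tau^-)$ where $\rho$ is a smooth nondecreasing diffeomorphism of $\RR$ chosen so that $\rho(\tau^-)$ vanishes exactly on $N^-$, is temporal, and — crucially — so that on the overlap region $\{0<\chi<1\}$ the two terms $\chi u$ and $(1-\chi)\rho(\tau^-)$ have gradients pointing to the same side of $\mC$, guaranteeing $w$ has $0$ as a regular value and $\{w=0\}$ is spacelike. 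One then checks $\{w=0\}$ is a genuine spatial hypersurface: it meets each inextensible causal curve exactly once (because it is squeezed between the Cauchy hypersurfaces $\{w=-\epsilon\}$ and $\{w=\epsilon\}$ obtained by the same gluing with $u$ replaced by $u\mp\epsilon$, which are Cauchy by the local finiteness/completeness argument), and it equals $N^-$ at infinity by construction; finally $\Sigma\cap W_0=\{w=0\}\cap W_0=\{u=0\}\cap W_0\supset K$ on a neighborhood $W_0$ of $K$ where $\chi\equiv 1$.

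The main obstacle I anticipate is the gluing on the transition annulus $\{0<\chi<1\}$: one must arrange that the interpolation between the "graph over $\Sigma$" piece and the "$N^-$" piece never creates critical points of $w$ on its zero set and never produces a point where $T_xH$ touches $\mC(x)$. This is the usual subtlety in extension arguments à la \cite{besa4}, and the right way to control it is to choose the supports and the function $\rho$ so that the sign of $dw_x\cdot v$ for $v\in\mC(x)$ is dominated by whichever of the two pieces is "winning", using that both $du$ and $d(\rho\circ\tau^-)$ are positive on $\mC$ and that $d\chi$ can be made arbitrarily small relative to these by shrinking $W$ and slowing $\rho$ near $0$. Once this is secured, the verification that $H$ is spatial and lies in $R$ is routine, using Corollary \ref{cor-sc} and the Cauchy property of the nearby level sets. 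The first, unconstrained statement of the proposition is then just the special case $N^\pm$ taken far out (or the construction with $\chi$ compactly supported and the ambient hypersurface chosen to be any fixed Cauchy hypersurface through which $\Sigma$ passes causally).
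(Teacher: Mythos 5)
Your overall strategy (interpolate a defining function for $\Sigma$ near $K$ with a defining function for $N^-$ at infinity) is a legitimate alternative to the paper's argument, but as written it has two genuine gaps. First, the cross term in $dw=\chi\,du+(1-\chi)\,d(\rho\circ\tau^-)+(u-\rho(\tau^-))\,d\chi$ is not controlled by the mechanism you propose: shrinking $W$ makes $\|d\chi\|$ \emph{larger}, not smaller, and slowing $\rho$ near $0$ kills the only positive contribution available at points of the transition annulus where $u$ is locally constant (recall the function from Proposition \ref{prop-near} satisfies $u\equiv\pm1$ outside $u^{-1}(]-1,1[)$, so $du=0$ there). At such points $dw\cdot v$ is dominated by the sign-indefinite term $(u-\rho(\tau^-))\,d\chi\cdot v$, and neither the causality of $w$ on $M$ nor the spacelikeness of $\{w=0\}$ follows. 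The actual constraint is that the transition annulus must be wide in the $g$-metric compared with the oscillation of $u-\rho\circ\tau^-$ over it, i.e.\ compared with the temporal separation between $\Sigma$ and $N^-$ there; your proposal contains no mechanism for arranging this, and it is exactly the point where naive gluings of Cauchy hypersurfaces are known to fail.

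Second, the claim that $\{w=0\}$ meets every inextensible causal curve exactly once is not proved: being squeezed between $\{w=-\epsilon\}$ and $\{w=\epsilon\}$ gives neither existence nor uniqueness of the intersection. What one needs is that $w$ is causal on all of $M$ (so $w\circ\gamma$ is monotone), changes sign along every inextensible causal curve, and is temporal near its zero set --- the first of these is precisely what the uncontrolled cross term threatens. Moreover, uniqueness ultimately requires acausality of a compact neighborhood of $K$ in $\Sigma$, which you never address and which is not automatic for a merely spacelike $\Sigma$; the paper spends a separate limit-curve argument (with nested enlargements $\mC_n$) establishing that such a neighborhood $Z$ is acausal for an enlarged cone field $\mC_1$. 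The paper then avoids the gluing problem altogether: working in product coordinates $M\cong N^-\times\Rm$ for $\mC_1$, it takes $H$ to be a smoothing, away from $K$, of the boundary of the past set $\mJ^-_{\mC_1}(Z)\cup\{\tau\leq 0\}$, which is automatically a compactly supported achronal Lipschitz graph over $N^-$, contains $Z$ because $Z$ is acausal, is smooth near $K$ because $\Sigma$ is spacelike there, and equals $N^-$ at infinity. Working throughout with the enlargement $\mC_1\supset\mC$ is also what makes the final smoothing harmless; your construction, carried out for $\mC$ itself, has no such margin.
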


\begin{proof}
We prove the second part of the Proposition, the first part  easily follows from it.

By Corollary \ref{cor-ce}, there exist  globally hyperbolic enlargements $\mC^{\pm}$ of $\mC$ 
such that $N^{\pm}$ is a Cauchy hypersurface for $\mC^{\pm}$.

We associate to $K$ the transverse open cone field $\mE_{K}$, which is defined by:

$\mE_{K}(x)=T_xM$ if $x\not \in K$ and 

$\mE_{K}(x)$ is the component of  $T_X M\setminus T_x \Sigma$ which contains $\mC(x)$
if $x\in K$.

By definition, this open cone field contains $\mC$, hence there exists an enlargement $\mC'$ of $\mC$ which is contained in $\mE_K$ (by \cite[Lemma 2.8]{BS1}).
The intersection $\mC_1:= \mC^+\cap \mC^- \cap \mC'$ is a globally hyperbolic closed cone field, 
$N^{\pm}$ are spatial hypersurfaces of $\mC_1$, 
and $\Sigma$ is spacelike for $\mC_1$ at each point of $K$.

Note that the set of points of $\Sigma$ at which $\Sigma$ is spacelike is relatively open in $\Sigma$
and contains $K$.
We can thus  assume (by possibly reducing it), that  $\Sigma$ is a  spacelike hypersurface for $\mC_1$.

We  consider a Cauchy temporal function $\tau$ for $\mC_1$  such that $\tau=0$ on $N^-$ and $\tau=1$ on $N^+$, according to Proposition \ref{prop-twocauchy}.

We can finally assume, by possibly reducing $\mC_1$, that $K$ is acausal for $\mC_1$, and even that there
exists a compact subset $Z\subset \Sigma$, which contains $K$ in its relative interior, and which is 
acausal for $\mC_1$.

To prove this last claim, we consider a decreasing sequence $Z_n$ of compact subsets of 
$\Sigma$ each of which is contained in the relative interior of the previous one, and  
such that $K=\cap Z_n$. We also consider a decreasing sequence $\mC_n$ of enlargements 
of $\mC$, each of which is an enlargement of the next one, and such that $\mC=\cap \mC_n$
(the existence of such a sequence of enlargements is proved in \cite[Lemma 2.9]{BS1}).
We assume by contradiction that 
for each $n$ there exists a $\mC_n$-causal curve $\gamma_n:I_n\lto M$, $I_n \subset[0,1]$ starting and ending in $Z_n$ and parametrized by $\tau$, \textit{i.e.} such that $\tau\circ \gamma_n(t)=t$. Then at the limit (along a subsequence), we obtain a $\mC$-causal curve $\gamma:I\lto M$ 
starting and ending in $K$. The acausality of $K$ implies that this curve is reduced to a point $\kappa \in K$. A local study at $\kappa$ leads to a contradiction.

The cone field $\mC_1$ contains a non-degenerate open cone, hence a smooth vector field $X$.
Each inextensible  orbit of $X$ intersects $N^-$ exactly once.
This defines a smooth map $\pi:M\lto N^-$ which, to each point $x$, associates the intersection of its orbit with $N^-$. 
The map 
$$
M \ni x\lmto (\pi(x),\tau(x))\in N^- \times \Rm
$$
is a smooth diffeomorphism. In other words, we can assume that 
$M=N^-\times \Rm$, $N^-=N^-\times \{0\}$, $N^+=N^-\times \{1\}$,  that $\mC_1(x)$ contains the vector $(0,1)$ at each point, that the vertical coordinate $\tau$ is a Cauchy temporal function for $\mC_1$,  and that $\tau(\Sigma)\in [0,1]$.

The set $F:= \mJ_{\mC_1}^-(Z)\cup \{\tau\leq 0\}$ is a past set for $\mC_1$, i.e. $F=\mJ^-_{\mC_1}(F)$, and it is equal  to $\{\tau\leq 0\}$ at infinity (meaning, outside of a compact set of $M$). 
Its complement is therefore a trapping domain for $\mC$ in the terminology of 
\cite{BS1}. Since $Z$ is acausal, it is contained in the boundary of $F$.
In the identification $M=N^-\times \Rm$, the boundary of $F$ is the graph of a compactly 
supported  Lipschitz function $\tilde f: N^-\lto [0, 1]$, which is smooth in a neighborhood
of $\pi(K)$.
We can then regularize $\tilde f$ to a compactly supported smooth function
$f:N^-\lto [0, 1]$ which 
is equal to $\tilde f$ near $\pi(K)$, and the graph $H$ of which is spacelike for $\mC_1$.
We can use for example \cite[Proposition 4.5]{BS1} or a more standard smoothing procedure.
The graph $H$ is the desired spatial hypersurface.
\end{proof}

\section{A Cauchy temporal function which is not completely uniform}\label{sec-ex}

Let us consider the plane $\Rm^2$ endowed with the constant cone field
$$\mC(x)=\{(v_x,v_y): v_y\geq |v_x|\},$$ 
i.e. the Minkowsky plane.
In this plane, we consider the strip $U=\Rm\times ]-1,1[$. The restriction of $\mC$
to $U$ is globally hyperbolic.

Let $\varphi:]-1,1[\lto \Rm$ be a smooth  increasing function onto $\Rm$.
Then the function $\varphi\circ y$ is a Cauchy time function on $U$.

An easy computation shows that the function $\tau(x,y):= y/(1+x^2)$
is a temporal function on $U$. Indeed
$$
|\partial_x \tau|<\frac{2|x|}{(1+x^2)^2}\leq \frac{1}{1+x^2}=\partial_y \tau.
$$

The function $h(x,y)=\tau(x,y)+\varphi(y)$ is thus a Cauchy temporal function. 

\begin{prop}
	If $\varphi'(0)=0$, then $h$ is not completely uniform.
\end{prop}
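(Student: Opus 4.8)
The strategy is to argue by contradiction: suppose $h$ is $G$-uniform for some complete Riemannian metric $G$ on $U=\Rm\times]-1,1[$. The key is to test the uniformity inequality $dh_{(x,y)}\cdot v\geq |v|^G_{(x,y)}$ along the boundary of the light cone, i.e. on the null vectors $v=(1,1)$ and $v=(1,-1)$, which lie in the closure of $\mC$; by continuity the inequality survives on $\partial\mC$. Since $h(x,y)=\tau(x,y)+\varphi(y)$ with $\tau(x,y)=y/(1+x^2)$, we have
$$
dh_{(x,y)}\cdot(1,\pm 1)=\partial_x\tau\pm(\partial_y\tau+\varphi'(y))=-\frac{2xy}{(1+x^2)^2}\pm\Bigl(\frac{1}{1+x^2}+\varphi'(y)\Bigr).
$$
Evaluate this along the line $y=0$: then $\partial_x\tau=0$ and, using $\varphi'(0)=0$, both quantities equal $\frac{1}{1+x^2}$. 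Hence on the curve $c(x)=(x,0)$ we get
$$
\sqrt{G_{(x,0)}\bigl((1,1),(1,1)\bigr)}\leq\frac{1}{1+x^2},\qquad
\sqrt{G_{(x,0)}\bigl((1,-1),(1,-1)\bigr)}\leq\frac{1}{1+x^2}.
$$

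From these two bounds I want to conclude that the line $\{y=0\}$, traversed as $x\to+\infty$ (say), has finite $G$-length, contradicting completeness of $G$ — since a complete metric forces every properly embedded ray that leaves every compact set to have infinite length, and $x\mapsto(x,0)$ is such a ray in $U$. To get a bound on the $G$-length of this ray I need to control $G_{(x,0)}\bigl((1,0),(1,0)\bigr)$, the $G$-norm of the \emph{tangent} vector $(1,0)$ to the curve. Write $(1,0)=\tfrac12(1,1)+\tfrac12(1,-1)$; by the triangle inequality for the $G$-norm,
$$
\bigl|(1,0)\bigr|^G_{(x,0)}\leq\tfrac12\bigl|(1,1)\bigr|^G_{(x,0)}+\tfrac12\bigl|(1,-1)\bigr|^G_{(x,0)}\leq\frac{1}{1+x^2}.
$$
Therefore the $G$-length of $x\mapsto(x,0)$, $x\in[0,\infty)$, is at most $\int_0^\infty\frac{dx}{1+x^2}=\pi/2<\infty$. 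This ray is inextensible in $U$ (it leaves every compact subset of the open strip), so by completeness of $G$ it must have infinite length — a contradiction. Hence $h$ is not $G$-uniform for any complete $G$, i.e. $h$ is not completely uniform.

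The one genuinely delicate point is justifying that the uniformity inequality, which is stated for $(x,v)\in\mC$ (i.e. for vectors strictly inside the open cone $v_y>|v_x|$), extends to the boundary vectors $(1,\pm1)$. This is immediate from continuity: $dh$ and $|\cdot|^G$ are continuous, and each null vector $(1,\pm1)$ at a point $(x,0)$ is a limit of vectors in $\mC(x,0)$ (e.g. $(1\mp\epsilon,1)\in\mC$ for small $\epsilon>0$), so passing to the limit in $dh_{(x,0)}\cdot v\geq|v|^G_{(x,0)}$ gives the inequality for $v=(1,\pm1)$. Everything else is the elementary computation of $dh$ above together with the two soft facts used as black boxes: the triangle inequality for a Riemannian norm on a fixed tangent space, and the fact that in a complete Riemannian manifold a curve escaping to infinity has infinite length.
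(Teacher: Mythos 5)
Your argument is, in substance, exactly the paper's: test the uniformity inequality on the two null generators of the cone at points of the axis $y=0$, use $\varphi'(0)=0$ to see that $dh$ evaluates to $1/(1+x^2)$ there, bound the $G$-norm of the tangent vector $(1,0)$ by the triangle inequality, and conclude that the axis has finite $G$-length, contradicting completeness. However, as written there is an error in the choice of the second null vector. The cone is $\mC(x)=\{(v_x,v_y): v_y\geq |v_x|\}$, so its two null generators are $(1,1)$ and $(-1,1)$; the vector $(1,-1)$ is \emph{past}-pointing and does not lie in $\overline{\mC}$, so the uniformity inequality cannot be applied to it. Concretely, your own formula gives $dh_{(x,0)}\cdot(1,-1)=-\tfrac{1}{1+x^2}<0$, so the assertion that ``both quantities equal $\tfrac{1}{1+x^2}$'' is false, and the bound $|(1,-1)|^G_{(x,0)}\leq \tfrac{1}{1+x^2}$ is unjustified. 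The step is repaired by replacing $(1,-1)$ with $(-1,1)$ throughout: then $dh_{(x,0)}\cdot(-1,1)=\tfrac{1}{1+x^2}$, and the decomposition becomes $(1,0)=\tfrac12\bigl[(1,1)-(-1,1)\bigr]$, to which the triangle inequality applies just as well, yielding $|(1,0)|^G_{(x,0)}\leq\tfrac{1}{1+x^2}$ and the same finite-length contradiction. This is precisely the paper's computation with $V^{\pm}=(\pm1,1)$ and $\ell=\int\tfrac12|V^+-V^-|\,dx$.

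Two minor remarks. First, the ``delicate point'' you flag is vacuous here: in this paper $\mC$ is a \emph{closed} cone field, so the null vectors $(1,1)$ and $(-1,1)$ belong to $\mC$ itself and the uniformity inequality applies to them directly, with no limiting argument. Second, your appeal to ``a curve escaping to infinity in a complete manifold has infinite length'' is fine (Hopf--Rinow: a finite-length curve from a fixed point stays in a closed metric ball, which is compact if $G$ is complete, whereas $x\mapsto(x,0)$ leaves every compact subset of the strip $U$); the paper phrases the same conclusion simply as ``the metric $g$ on $U$ is not complete.''
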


\begin{proof}
We prove that, if  $g$ is a metric  such that $h$ is $g$-uniform,
then $g$ is not complete.

We consider the constant  $\mC$-causal vector fields $V^+=(1,1)$ and $V^-=(-1,1)$.
For the metric $g$, we have
$$
|V^+|_{(x,0)}+|V^-|_{(x,0)}\leq dh_{(x,0)} \cdot (V^++V^-)=2\partial_y h(x,0)=\frac{2}{1+x^2}.
$$ 
The $g$-length $\ell$ of the axis $y=0$  thus satisfies
$$
\ell=\int_{\Rm} \frac{1}{2}|V^+-V^-|_{(x,0)} dx\leq
\int _{\Rm} \frac{|V^+|_{(x,0)}+|V^-|_{(x,0)}}{2} dx\leq
 \int \frac{1}{1+x^2}dx
<\infty .
$$
This implies that the metric $g$ on $U$ is not complete. 
\end{proof}

\section{Two Cauchy Hypersurfaces}\label{sec-two}

We consider in this section a non degenerate ($\mD=M$) globally hyperbolic cone field $\mC$
and prove  Theorem \ref{thm-two-unif} under this assumption, i.e. we prove:

\begin{prop}\label{prop-two}
	Let $\mC$ be a globally hyperbolic non degenerate cone field with smooth boundary.
	Let $H_0,H_1$ be two spatial subsets such that $H_1\cap  \mJ^-(H_0)=\emptyset$. Then there exists a completely uniform
	temporal function $\tau\colon M\to \RR$ such that  $\tau=i$ on $H_i$ for $i=0,1$ (hence 
	$\mD\cap \tau^{-1}(i)= \mD\cap H_i$).
\end{prop}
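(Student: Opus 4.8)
The plan is to build the temporal function $\tau$ by a careful interpolation between $H_0$ and $H_1$, combining the ``two Cauchy hypersurface'' existence result Proposition \ref{prop-twocauchy} with a uniformization procedure in the intermediate region. The starting point is Proposition \ref{prop-twocauchy}: fix a complete Riemannian metric $g$ on $M$ and obtain a Cauchy temporal function $\sigma$ with $\sigma = i$ on $H_i$, which is already $g$-uniform on $\{\sigma \geq 9/10\}$ and on $\{\sigma \leq 1/10\}$. The only defect is that $\sigma$ need not be uniform with respect to \emph{any} complete metric on the ``middle band'' $\{1/10 < \sigma < 9/10\}$. So the task reduces to modifying $\sigma$ on this compact-in-the-causal-sense slab while keeping the boundary values, keeping temporality, and gaining complete uniformity.

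The key idea for the middle band is that it is an open globally hyperbolic cone field $(W, \mC|_W)$ with $W := \{1/10 < \sigma < 9/10\}$ (globally hyperbolic because $\sigma$ restricts to a Cauchy temporal function there; here one uses that $\mC$ is non-degenerate with smooth boundary so Lemma \ref{lem-solvable} and the product/smoothing machinery apply). By the main existence theorem of \cite{BS1} quoted in the introduction, $(W,\mC|_W)$ admits a temporal function $w$ which is $h$-uniform for \emph{every} Riemannian metric $h$ on $W$; in particular, choosing $h$ to be a metric on $W$ that blows up as one approaches $\partial W$ (so that $h$ is complete on $W$), we get $w$ which is complete-uniform on $W$. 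The plan is then to graft $w$ onto $\sigma$: pick a smooth function $\eta \colon \RR \to \RR$, increasing, with $\eta \equiv 0$ near $\sigma \leq 1/10$-values and $\eta$ growing fast enough toward $\sigma \uparrow 9/10$, composed with $w$, so that $\eta \circ w$ extends smoothly by constants outside $W$; symmetrically damp it near the far boundary. Adding a small multiple $\epsilon(\eta\circ w)$ of this to a reparametrized $\sigma$ produces a function that is still temporal everywhere (temporality is an open, convex condition and $\sigma$ is already temporal), still equals $i$ on $H_i$, and is now $g'$-uniform for a metric $g'$ which agrees with $g$ near the two boundary slabs and agrees (up to scale) with the complete metric $h$ in the interior — hence complete on all of $M$. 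The glueing of $g'$ across $\{\sigma = 1/10\}$ and $\{\sigma = 9/10\}$ is done with a partition of unity; completeness of a Riemannian metric that is a convex combination of complete metrics away from a compact transition zone, and complete near each end, follows from a standard properness argument on geodesic balls.

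The main obstacle I expect is the \emph{matching of the uniformity estimates across the interfaces} $\{\sigma = 1/10\}$ and $\{\sigma = 9/10\}$: near these hypersurfaces the function $\eta \circ w$ must transition from ``carrying the uniform lower bound'' (in the middle) to ``contributing nothing'' (at the ends where $\sigma$ itself is $g$-uniform), and the derivative $\eta' \cdot dw$ can blow up precisely where $w \to \pm\infty$, i.e.\ exactly at the interface, so one must choose the damping/reparametrization $\eta$ and the cutoffs compatibly so that the sum $\epsilon\, d(\eta\circ w)_x\cdot v + d(\tilde\sigma)_x \cdot v$ dominates $|v|^{g'}_x$ uniformly, with no loss in a neighbourhood of the interface. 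This is a delicate but bounded computation; the likely resolution is to do the matching on a slightly larger overlap region $\{1/20 < \sigma < 19/20\}$ where $\sigma$ is still known (by the construction in Proposition \ref{prop-twocauchy}, or by first inflating that region) to be $g$-uniform, so that near each interface \emph{both} summands are available and one can afford to have $\eta\circ w$ decay to a constant without destroying the estimate. A secondary, more routine obstacle is verifying that the resulting interpolated function is genuinely Cauchy — but this is automatic once it is completely uniform, by Lemma 1.5 (since $\mC$ is locally solvable, being non-degenerate).
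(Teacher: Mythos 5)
Your overall architecture matches the paper's: Proposition \ref{prop-two} is indeed derived from Proposition \ref{prop-twocauchy} plus a uniformization of the middle band (this is Proposition \ref{prop-two-unif} in the paper), and you correctly locate the difficulty in that band. But your proposed uniformization has a genuine gap, and it is not the ``delicate but bounded computation'' at the interfaces that you anticipate. The theorem of \cite{BS1} gives you, for a chosen complete metric $h$ on the open band $W$, an $h$-uniform temporal function $w\colon W\to\Rm$; being completely uniform on the locally solvable $(W,\mC|_W)$, such a $w$ is Cauchy for $W$ and hence unbounded, so any damping $\eta$ making $\eta\circ w$ extendable by constants must have $\eta'\to 0$ at $\pm\infty$. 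The problem is that the set where $|w|$ is large is \emph{not} confined to a neighbourhood of the interfaces: the level sets of $w$ are just some Cauchy hypersurfaces of $W$, and nothing in the invoked theorem prevents them from drifting through the middle sub-band $\{1/5\le\sigma\le 4/5\}$ at spatial infinity (already in the Minkowski strip $\Rm\times\,]-1,1[$ the graph of $(1-\delta)\tanh x$ is a spacelike Cauchy hypersurface, so a level set of $w$ may sweep from the bottom to the top of the band as $x$ runs over $\Rm$, forcing $w$ to be unbounded on $\{y=0\}$). Wherever this happens, $\eta'(w)$ tends to $0$, the summand $\epsilon\,\eta\circ w$ contributes nothing to uniformity, and $\tilde\sigma$ is only $g$-uniform near the ends of the band, not in its middle. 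Since uniformity forces $|V^+-V^-|^{g'}\le dF(V^+)+dF(V^-)$ for causal $V^\pm$ (exactly the mechanism of the counterexample in Section \ref{sec-ex}), the glued metric is then forced to degenerate along a spacelike curve going to infinity inside the band, and completeness fails. What your plan is missing is precisely the paper's key idea: Lemma \ref{lem-osc} constructs a single spatial hypersurface $G$ oscillating between $\{\tau=a_1\}$ and $\{\tau=b_1\}$ so that $\mJ^{++}(G)$ and $\mJ^{--}(G)$, intersected with the band, decompose into bounded pieces that are pairwise $g$-separated by distance $1$; the metric $h=\lambda g+du\otimes du$, with $u$ the step function of Proposition \ref{prop-near} across $G$, is then complete because a curve of finite length can visit only finitely many pieces. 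This combinatorial control of the interpolating hypersurface relative to the band is not a technicality one can recover by tuning $\eta$ and the cutoffs.

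A secondary error: your completeness argument for the glued metric assumes the transition zones around $\{\sigma=1/10\}$ and $\{\sigma=9/10\}$ are compact. They are collars around Cauchy hypersurfaces, which are non-compact in general, so the ``standard properness argument'' does not apply as stated; in the paper completeness is instead proved globally using the separation estimates $d_h(A_n^{\pm},\cup_{m\neq n}A_m^{\pm})\ge 1/5$ coming from the bounded-pieces structure.
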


It is convenient in this section to fix once and for all a complete Riemannian 
metric $g$ on $M$ (but we shall be led to consider also other metrics).
In view of Proposition \ref{prop-twocauchy}, Proposition \ref{prop-two} follows from:

\begin{prop}\label{prop-two-unif}
	Let $\tau$ be a smooth Cauchy temporal function, and $g$ be a complete Riemannian metric on $M$.
We assume that there exists  $a<b$ in $]0,1[$ such that $\tau$ is $g$-uniform on 
	$\tau^{-1}(]0,a[\cup ]b,1[)$.
	Then there exists a smooth Cauchy temporal function $\theta$ and a complete Riemannian metric $h$ on $M$ such that:
	
	The function  $\theta$ is $h$-uniform on $\tau^{-1}(]0,1[)$ and equal to $\tau$ outside this strip.
	
	The metric $h$ is equal to $g$ outside of the strip $\tau^{-1}(]0,1[)$, and it satisfies
	$$
	d_h(\tau^{-1}(0), \tau^{-1}(1))\geq 1/3.
	$$
\end{prop}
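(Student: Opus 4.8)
The plan is to build $\theta$ in the form $\theta = \tau + w$, where $w$ is a compactly-supported-in-the-strip correction that does not destroy temporality of $\tau$, but which is large enough (in the sense of $d\theta \cdot v$ being large on $\mC$) to make $\theta$ uniform on $\tau^{-1}(]0,1[)$ with respect to a suitably inflated metric $h$. The key tension is that uniformity with respect to a \emph{complete} metric forces the metric to be large where the cones are ``thin'' relative to $d\tau$; but then the gradient flow of $\tau$ may be very slow in the $h$-metric, so $d_h(\tau^{-1}(0),\tau^{-1}(1))$ could collapse to zero — exactly the obstruction exhibited by the example in Section \ref{sec-ex}. So the real content is a quantitative trade-off: first choose how large $h$ must be to witness uniformity of $\theta$, then check that the transversal $h$-distance between the two Cauchy levels is bounded below by $1/3$. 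I expect \emph{this} balancing — producing $w$ and $h$ simultaneously so that uniformity holds \emph{and} the distance estimate survives — to be the main obstacle.

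First I would use global hyperbolicity of the strip $\tau^{-1}(]0,1[)$ together with Theorem \ref{thm-one} (applied to the spatial levels $\tau^{-1}(c)$) to produce, for each $c$ in a fine partition $0=c_0<c_1<\dots<c_N=1$ of $[0,1]$, an auxiliary temporal function $u_c$ that is $g$-uniform on a neighborhood of $\tau^{-1}(c)$ — more efficiently, I would invoke Lemma \ref{lem-mod} to locally modify $\tau$ near each level $\tau^{-1}(c_j)$ so that the modified function is $g$-uniform on a thin slab $\tau^{-1}(]c_j-\delta, c_j+\delta[)$, keeping the modification supported in $\tau^{-1}(]c_j-1/3,c_j+1/3[)$ and unchanged at $\tau = c_j$. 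Rescaling the partition (replacing $\tau$ by $N\tau$ on the strip and compressing back) makes the slabs $\tau^{-1}(]c_j-\delta,c_j+\delta[)$ cover $\tau^{-1}([0,1])$ with controlled overlap. Patching these local modifications with a partition of unity subordinate to the slabs then yields a single smooth Cauchy temporal function $\theta$, equal to $\tau$ outside the strip, which is $g$-uniform (up to a uniform constant depending only on the overlap) on all of $\tau^{-1}(]0,1[)$. The hypothesis that $\tau$ is already $g$-uniform near $\tau = 0$ and $\tau = 1$ is what lets the patching be trivial at the two ends, so $\theta = \tau$ genuinely holds outside the strip and no metric change is needed there.

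Next comes the metric. Having $\theta$ $g$-uniform on $\tau^{-1}(]0,1[)$ (after absorbing the overlap constant into the construction, or equivalently replacing $g$ by $c^2 g$ on the strip for a fixed $c$), I take $h = g$ outside the strip and, inside, $h$ to be a smooth metric interpolating between $g$ at the two faces and a possibly larger metric in the interior, chosen small enough that the distance estimate holds: concretely, I would pick a smooth cutoff and set $h = (1+\rho)g$ on the strip with $\rho$ supported away from $\tau^{-1}(\{0,1\})$ and $\int_0^1 (1+\rho)^{1/2}$ along $\nabla\tau$-integral curves controlled. Since $\theta$ is already $g$-uniform and $h \ge g$, $\theta$ is automatically $h$-uniform on the strip. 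Completeness of $h$ follows because $h \ge g$ everywhere and $g$ is complete. For the distance bound, any $h$-rectifiable curve from $\tau^{-1}(0)$ to $\tau^{-1}(1)$ must cross all the levels $\tau^{-1}(c)$, $c\in[0,1]$; its $h$-length is at least its $g$-length, and a curve joining $\tau=0$ to $\tau=1$ has $g$-length at least $\inf_{\tau^{-1}([0,1])} |\nabla^g \tau|^{-1}_g \cdot \ldots$ — more precisely $\int |d(\tau\circ\sigma)| \le \sup|\nabla^g\tau| \cdot \mathrm{length}_g(\sigma)$, so $\mathrm{length}_g(\sigma) \ge 1/\sup_{\tau^{-1}([0,1])}|\nabla^g\tau|_g$. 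If this supremum is not already $\le 3$, I rescale the ambient $g$ (equivalently shrink $\tau$'s variation, or rescale at the outset, noting all ``$g$-uniform'' statements survive replacing $g$ by $\lambda^2 g$ with $\lambda<1$ at the cost only of weakening uniformity, which we then restore by the patching constant) to force $d_h \ge d_g \ge 1/3$ between the two levels. The one point requiring care is that the rescaling used to get the distance bound must not be undone by the rescaling used to get uniformity; I would organize the argument so that the partition size $N$ is chosen \emph{after} fixing the metric scale, since making $N$ large only helps uniformity and does not touch the metric or the distance.
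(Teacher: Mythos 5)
There is a genuine gap, and it sits exactly where the actual difficulty of Proposition \ref{prop-two-unif} lies. First, a direction error: if $\theta$ is $g$-uniform and $h\geq g$, then $|v|^h_x\geq |v|^g_x$, so $d\theta_x\cdot v\geq |v|^g_x$ does \emph{not} give $d\theta_x\cdot v\geq |v|^h_x$. Uniformity with respect to a larger metric is a \emph{stronger} condition; enlarging the metric can only destroy uniformity, and your step ``$\theta$ is $g$-uniform and $h\geq g$, hence $h$-uniform, hence $h$ is complete because $h\geq g$'' collapses. Second, and more fundamentally, the target of your first stage is unattainable: if $\theta$ equals $\tau$ outside the strip (so $\theta=0$ on $\tau^{-1}(0)$ and $\theta=1$ on $\tau^{-1}(1)$) and is $g$-uniform on the strip, then integrating $d\theta\cdot\dot\gamma\geq |\dot\gamma|^g$ along any causal curve $\gamma$ crossing the strip gives $L_g(\gamma)\leq 1$. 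For a general complete $g$ there are causal curves crossing the strip of arbitrarily large $g$-length (e.g.\ when the cones widen at infinity), so no such $\theta$ exists, for $g$ or for any fixed constant multiple $c^2g$. This also explains why the patching cannot produce a ``uniform constant depending only on the overlap'': the loss is governed by the aperture of the cones relative to $g$, which is not uniformly controlled over the noncompact strip; and successive applications of Lemma \ref{lem-mod} at levels spaced closer than $2/3$ interfere, since each modification is supported in a $\tau$-slab of width $2/3$ and reintroduces a reparametrization $\eta$ with derivative $<1$ precisely where the previous step achieved uniformity.

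The consequence is that the metric must be \emph{shrunk}, non-uniformly, on the part of the strip away from the prescribed levels (the paper's $h=\lambda g+du\otimes du$ with $\lambda\leq 1$ and $\lambda$ possibly tending to $0$ at infinity), and the whole content of the proposition is then to recover \emph{completeness} of the shrunken metric. Your proposal contains no mechanism for this. The paper's solution is the oscillating spatial hypersurface $G$ of Lemma \ref{lem-osc}: the region where $\lambda<1$ splits into bounded pieces that are mutually $g$-separated through the region $\{\lambda=1\}$ and separated across $G$ by the term $du\otimes du$ (which also yields the bound $d_h(\tau^{-1}(0),\tau^{-1}(1))\geq 1/3$ directly, since $2u$ varies by $1/3$ across the strip), so that any curve of finite $h$-length meets only finitely many of them and stays in a compact set. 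Your distance argument via $\sup|\nabla^g\tau|$ is also unavailable, since that supremum need not be finite on the noncompact strip. Some idea playing the role of $G$ is indispensable here; compare the example of Section \ref{sec-ex}, which shows that the naive shrinking of the metric really does produce incomplete metrics.
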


\begin{proof}
We assume without loss of generality that $a<1/10$, 
$b>9/10$. We introduce intermediate values  
$a_1\in ]0,a[$ and $b_1\in ]b,1[$.

We will construct a spatial hypersurface $G$ which oscillates between the hypersurfaces $\{\tau=a_1\}$
and $\{\tau=b_1\}$: 

\begin{lem}\label{lem-osc}
	There exists a spatial hypersurface $G\subset \tau^{-1}([a_1,b_1])$ such that :
	
	The intersection  $\mJ^{++}(G)\cap \{\tau < b_1\}$
	is the disjoint union of a countable family of bounded sets  $F_n, n\geq 0$ such that 
	$d_g(F_n, F_m)\geq 1$ if $n \neq m$.
	
	The intersection  $\mJ^{--}(G)\cap \{\tau >  a_1\}$
	is the disjoint union of a countable family of bounded sets  $P_n, n\geq 0$ such that 
	$d_g(P_n, P_m)\geq 1$ if $n \neq m$.
\end{lem}

Assuming the lemma for the moment, we finish the proof of  Proposition \ref{prop-two-unif}.

We consider a smooth modification $\eta\circ \tau$ of $\tau$,
where $\eta: \Rm\lto \Rm$ is an increasing diffeomorphism satisfying $ \eta(t)=t-1/3$  for $t>9/10$ and  $\eta(t)=t+1/3$ for $t<1/10$.

We apply Proposition \ref{prop-near}  to the hypersurface $G$ in the globally hyperbolic open set
$\tau^{-1}(]0,1[)$ and obtain a smooth causal function $u:M\lto [-1/6,1/6]$ which is equal to $1/6$ in
 $\{\tau \geq 1\} $, to $-1/6$ in $\{\tau \leq 0\}$, and which is  $g$-uniform on $u^{-1}(]-1/7, 1/7[)$.
We set 
$$V := \tau^{-1}((-\infty, a_1[)\cup u^{-1}(]-1/10, 1/10[) \cup \tau^{-1}(]b_1,\infty)),
$$
the temporal function $\eta \circ \tau +u$ is $g$-uniform  in a neighborhood
of $\bar V$, hence there exists a smooth function $\lambda: M\lto]0,1]$, which is equal to one on 
$V$,
and  such that $\eta \circ \tau +u$ is $\lambda g$-uniform for $\mC$.
We don't know, though, whether the metric $\lambda g$ is complete. 
To solve this problem, we define 
$$
\theta:= \eta\circ \tau + 2u
\quad, \quad 
h:= \lambda g + du\otimes du.
$$
The function  $\theta$ is the sum of the function $\eta\circ \tau +u$, which is $\lambda g$-uniform,
and $u$, which is $du\otimes du$-uniform. It is therefore   $h$-uniform. It is moreover equal 
to $\tau$ outside of the strip $\tau^{-1}(]0,1[)$.

We now prove  that the metric $h$ is complete. This implies that $\theta$ is completely uniform for $\mC$.
 We denote by $A$ the complement of $V$, and set $A^+:= A\cap \mJ^{+}(G)$ and $A^-:=A\cap  \mJ^{-}(G)$.
We first observe that $d_h(\{u\geq 1/10\}, \{u\leq -1/10\})\geq 1/5$.
Indeed, the $h$-length $L$ of a curve $\gamma$  is bounded below by $|u(\gamma(1))-u(\gamma(0))|$
and $u_{|A^+}\geq 1/10$, $u_{|A^-}\leq -1/10$.
This remark also proves that $d_h(\{\tau=0\}, \{\tau=1\})\geq 1/3$.

The closed set $A^+$ is the disjoint union of the  bounded sets $A_n^+:= A^+\cap F_n$,
and the closed set $A^-$ is the disjoint union of the  bounded  sets $A_n^-:= A^-\cap P_n$.
Since $A^+\subset \{\tau \geq 1/10\}$ and  $A^-\subset \{\tau \leq -1/10\}$, we have
$
d_h(A^+, A^-)\geq 1/5.
$

Let $\gamma$ be a curve connecting $A_n^+$ and $\cup_{n\neq m} A_m^+$. If $\gamma$  enters $A^-$, then 
$L_{h}(\gamma) \geq d_h(\{u\geq 1/10\}, \{u\leq -1/10\})\geq 1/5$.
If $\gamma$  does not enter $A^-$, then $L_{\lambda g}(\gamma)\geq L_{\lambda g}(\gamma\cap V) =L_{g}(\gamma \cap V)
\geq d_g(A^+_n,\cup_{n\neq m} A_m^+) \geq 1$, since $\lambda = 1$ on $V$. 
In both cases, $L_h(\gamma)\geq 1/5$, hence  (using similar arguments for $A_n^-$)
$d_h(A_n^+,\cup _{m\neq n} A_m^+)\geq 1/5$ and $d_h(A_n^-,\cup _{m\neq n} A_m^-)\geq 1/5$.
In view of these inequalities, a curve $\gamma$ of finite $h$-length $L$ visits only finitely many of the sets $A_i^{\pm}$. If $K$ is the union of these finitely many visited sets, then $K$ is compact,
and $\gamma$ is contained in the compact set $\{x: d_g (K,x)\leq L\}$ (recall that $g$ is a 
complete metric).
\end{proof}

\textsc{Proof of Lemma  \ref{lem-osc}:}
The proof consists of repeated applications of Proposition \ref{prop-ext}.
We denote by $N^+$ the hypersurface $\{\tau=b_1\}$, and by $N^-$ the hypersurface $\{\tau=a_1\}$. 
Further we  denote  by $S$ the open strip $\tau^{-1}(]a_1,b_1[)$
and by $\bar S$ its closure  $\tau^{-1}([a_1,b_1])$.  We set $B(F,r):=\{y: d(y,F)\leq r\}$.

By Proposition \ref{prop-ext}, applied to $-\mC$, there exists a spatial hypersurface $G_0^+\subset \bar S$ which
contains a point $x_0\in N^-$ and is equal to $N^+$ at infinity.
We set $F_0:=\mJ^{++}(G^+_0)\cap S$.

Then, by Proposition \ref{prop-ext}, there exists a spatial hypersurface 
$G^-_0\subset \bar S$ which contains $G^+_0\cap \mJ^+(B(F_0,1))$ and is equal to $N^-$ at infinity.
Note then that 
$$\mJ^{++}(G^-_0)\cap S\cap B(F_0,1)= F_0. $$
Indeed, we have $\mJ^-(G_0^+)\cap B(F_0,1)\subset \mJ^-(G_0^-)$ hence 
$\mJ^-(G_0^+)\subset \mJ^-(G_0^-) \cup B^c(F_0,1)$.
Taking the complements in $S$, we get the inclusion 
$\mJ^{++}(G^-_0)\cap S\cap B(F_0,1)\subset F_0. $ The opposite inclusion is obvious.
We set $P_0:= \mJ^{--}(G^-_0)\cap S$.

We construct inductively sequences $G^{\pm}_n$ of spatial hypersurfaces,
and  increasing sequences $\mF_n, \mP_n$ of bounded subsets of $S$  such that:
 (1) $G^{\pm}_n$ is  equal to $N^{\pm}$ at infinity for each $n$;
(2) $\mF_n =S\cap \mJ^{++}(G_n^+), \quad \mP_n =S\cap \mJ^{--}(G_n^-)$;
(3) $G^+_{n+1}$ contains $G^-_n \cap \mJ^-(B(\mF_n\cup \mP_n,1)\cup B(x_0, 2n))$;
(4) $G^-_{n+1}$ contains $G^+_{n+1} \cap \mJ^+(B(\mF_{n+1}\cup \mP_{n},1 )\cup B(x_0,2n+2))$.

We denote $P_{n+1}:= \mP_{n+1}\setminus \mP_n$ and $F_{n+1}:=\mF_{n+1}\setminus \mF_n$.

Note that at each step we have $G_{n+1}^+\subset \mJ^+(G_n^-)\cap \mJ^-(G_n^+)$ and
$G_{n+1}^-\subset \mJ^-(G_{n+1}^+){\tiny } \cap \mJ^+(G_n^-)$.

Denoting $B_n:=  B(\mF_n\cup \mP_n,1)\cup B(x_0, 2n)$ and $B_n^c$ its complement,
(3) implies that that $\mJ^+(G_n^-)\cap B_n\subset \mJ^+(G_{n+1}^+)$.
This implies that $\mJ^+(G_n^-)\subset \mJ^+(G_{n+1}^+)\cup B_n^c$.
Taking the complements and intersecting with $S$ yields 
$$\mJ^{--}(G_{n+1}^+)\cap B_n \cap S\subset \mJ^{--}(G_n^-)\cap S=\mP_n$$
hence $\mP_{n+1}\cap B_n=\mP_n$,
and in particular $d(P_{n+1}, \mP_n)\geq 1$, hence $d(P_n,P_m)\geq 1$ 
for $n\neq m$.
Similarly, we have
$\mF_{n+1}\cap \big(B(\mF_{n}\cup \mP_{n-1},1)\cup B(x_0, 2n+2)\big)=\mF_n$.
Finally, we observe that $G_{n+1}^+=G_n^-=G_n^+$ on $B(x_0,2n)$. As a consequence,
the sequence $G_n^+$ is stabilizing on any bounded set, and has a limit $G$, which is
a spatial hypersurface, such that $\mJ^{++}(G)\cap S= \cup F_n$ and 
$\mJ^{--}(G)\cap S= \cup P_n$. To prove that $G$ is Cauchy, observe that the intersection
$\gamma\cap S$ of an inextensible causal curve $\gamma$ with the strip $S$ is bounded, hence the sequence $G_n^+\cap \gamma$ stabilizes and is equal to a single point.
\qed

\section{Density of completely uniform temporal functions in Cauchy temporal functions}\label{sec-unif}

We prove Theorem \ref{thm-dense}. 
We do not assume explicitly that $\mD$ has smooth boundary, but we assume that the conclusion of Theorem \ref{thm-two-unif} holds for $\mC$.
We fix a complete metric $g$.

Since Cauchy temporal functions are dense
in Cauchy causal functions, see \cite[Corollary 9]{BS2}, we can assume without loss of generality that 
$\tau$ is a Cauchy temporal function.

Notice that $au$ is a uniform temporal function if $u$ is and if $a>0$.
In view of this remark, it is enough to prove the result with $\epsilon=1$, which will simplify notations.

We will show the existence of a completely uniform function $v$ such that 
$v^{-1}([k,k+1])=\tau^{-1}([k,k+1])$ for each $k\in \Zm$, this implies that $\sup |v-\tau|\leq 1$.

By repeated applications of Lemma \ref{lem-mod}, we can replace the Cauchy  temporal function $\tau$ by a 
Cauchy temporal function $\tilde \tau$ such that $\tilde \tau^{-1}([k,k+1])=\tau^{-1}([k,k+1])$ for each $k\in \Zm$ and which moreover is $g$-uniform on 
$\tilde \tau^{-1}(B)$, with $B:=\{t\in \Rm: d(t,\Zm)<1/10\}$.

By repeated use of  Proposition \ref{prop-two-unif}, we  inductively  construct a sequence $v_n$, $n\geq 0$,
of smooth temporal functions and a sequence $h_n$, $n\geq 0$ of complete metrics with the following properties:
(a) $h_0=g$, $v_0=\tilde \tau$. (b) For each $n\geq 1$, $h_{n}=h_{n-1}$ and $v_{n}=v_{n-1}$ outside of 
$\tau^{-1}( ]-n, 1-n[\cup ]n-1,n[)$. (c) $v_n$ is $h_n$-uniform on $v_n^{-1}([-n,n]\cup B)$. (d) $v_n^{-1}([k,k+1])=\tau^{-1}([k,k+1])$ for each $n$ and $k$.
(e) $d_{h_n}(\tau^{-1}(k), \tau^{-1}(k+1))\geq 1/3$ for each $k$ in $\{-n, 1-n, \ldots, n-2,n-1\}$.
%
%
%

Since the sequences $v_n$ and $h_n$ stabilize on any compact set they converge
respectively to a smooth temporal function $v$ and a Riemannian metric $h$ such that $v$
is $h$-uniform. 
We just have to verify that $h$ is a complete metric. 
Let $\gamma$ be a curve of finite $h$-length. Since 
$d_{h}(\tau^{-1}(k), \tau^{-1}(k+1))\geq 1/3$ for each $k$, 
the curve $\gamma$ 
crosses only a finite number of integral level sets of $\tau$.
As a consequence, it is contained in $\tau^{-1}([-N,N])$ for some large $N$.
Then, the $h_N$-length of $\gamma$ is equal to its $h$-length, hence it is finite.
Since $h_N$ is complete, we deduce that $\gamma$ is relatively compact in $M$.

\section{Closed cone fields with smooth boundary}\label{sec-smooth}

The main goal of the present section is to finish the proof of Theorem \ref{thm-two-unif} by reducing the case with
smooth boundary to the non-degenerate case.

Let us first give an example illustrating the kind of difficulties which can 
occur for general globally hyperbolic cone fields.
We consider the standard Minkowski cone field $\mC_m$ on the plane,
and the closed set $\mD$ formed by the union of the three half lines starting at $0$ 
and directed by the vector $(1,1)$, $(-1,1)$ and $(0,-1)$.
The restriction $\mC$ to $\mD$ of $\mC_m$ is easily seen to be globally hyperbolic,
the second coordinate being a Cauchy temporal function $y$.
However, it does not admit a product structure, and not all level sets of $y_{|\mD}$ have the same topology (some have two points, some only one).

\begin{lem}
	Let $\mC$ be a locally solvable closed cone field with smooth boundary.
	Then at each boundary point $x$ of the domain, there exists a vector $v\in \mC(x)$
	which is tangent to the boundary and non-zero.	
\end{lem}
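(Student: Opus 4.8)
Let $x$ be a boundary point of the domain $\mD$, and write $\partial\mD$ for the boundary. Since $\mD$ is a submanifold with boundary, $T_x\partial\mD$ is a hyperplane in $T_x\mD$ cutting $T_x\mD$ into two closed half-spaces. The plan is to exhibit a nonzero vector in $\mC(x)\cap T_x\partial\mD$ by a limiting argument applied to the causal curves whose existence is guaranteed by local solvability. The rough idea: local solvability gives a nonconstant causal curve through each nearby boundary point; rescaling the velocities of these curves and letting the base points converge to $x$ produces, in the limit, a causal vector at $x$ which must be tangent to the boundary because it is a limit of secant directions of curves constrained to lie in $\mD$ (which near $x$ is contained, to first order, in a half-space of the hyperplane $T_x\partial\mD$).

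**The main steps.** First I would fix a chart near $x$ in which $\mD$ corresponds to $\{z\in\Rm^k:\ z_k\geq 0\}\subset\Rm^n$ (smooth boundary), so $\partial\mD=\{z_k=0\}$ and $T_x\partial\mD=\{v_k=0\}$ inside $T_x\mD=\Rm^k$. Since $\mC$ is locally solvable, for each boundary point $p=\gamma_p(0)$ near $x$ there is a nonconstant causal curve $\gamma_p$ with $\gamma_p(0)=p$; because the curve is causal it stays in $\mD$, so in the chart its $z_k$-component is $\geq 0$ and vanishes at $t=0$, forcing $\dot\gamma_p(0)$ (where it exists; otherwise use a nearby time or a difference quotient) to have $k$-th coordinate $\geq 0$. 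Running the curve backward would force the $k$-th coordinate $\leq 0$; but $\mC$ being locally solvable means a curve exists on a two-sided interval $]-1,1[$, so in fact we can evaluate on both sides. Second, I would take a sequence $p_n\to x$ of boundary points and consider the unit vectors $v_n:=\dot\gamma_{p_n}(t_n)/|\dot\gamma_{p_n}(t_n)|$ for suitable small $t_n$ chosen so $\gamma_{p_n}(t_n)\to x$ and the velocity exists; by compactness of the unit sphere, after extracting a subsequence $v_n\to v$ with $|v|=1$. Since $\mC\cup T^0M$ is closed, $v\in\mC(x)$. Third — the key point — I would argue $v\in T_x\partial\mD$: the $z_k$-coordinate of $\gamma_{p_n}$ is a nonnegative function vanishing at $0$, hence its derivative at points of positive measure accumulating near $0$ is small, and a careful choice of $t_n$ (e.g. $t_n$ where $z_k\circ\gamma_{p_n}$ is differentiable with derivative tending to $0$, which is possible since the function starts and, by local solvability applied in both time directions, is pinned near $0$) makes the $k$-th coordinate of $v_n$ tend to $0$, so $v_k=0$.

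**Alternative, cleaner route.** Rather than wrestling with difference quotients, I would prefer to invoke the limit curve lemma from \cite[Section 2.3]{BS1} directly: rescale each causal curve $\gamma_{p_n}$ to unit $g$-speed (for a fixed complete metric $g$), so it is $1$-Lipschitz and defined on a uniform interval by local solvability; by the limit curve lemma a subsequence converges uniformly on compacts to a nonconstant causal curve $\gamma$ through $x$. This limit curve lies in $\mD$, and since it passes through the boundary point $x$ and $\mD$ is a smooth manifold with boundary, the curve must be tangent to $\partial\mD$ at $x$ — otherwise it would immediately leave $\mD$ on one side of $t=0$. Hence $\dot\gamma(0)\in\mC(x)\cap T_x\partial\mD$ is the desired nonzero vector (nonzero because the limit curve is nonconstant and parametrized by arclength). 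The main obstacle I anticipate is making rigorous the claim ``a causal curve in $\mD$ through a boundary point is tangent to the boundary there''; the honest way to see this is that the $z_k$-component of $\gamma$ is nonnegative, vanishes at $0$, and is Lipschitz, so $0$ is a minimum and the (one-sided) derivatives at $0$ must be $\geq 0$ from the right and $\leq 0$ from the left, while local solvability lets us run $\gamma$ on both sides of $0$; combining, the velocity at $0$ lies in $\{v_k=0\}$, i.e. in $T_x\partial\mD$. A small technical care is needed because $\dot\gamma(0)$ need only exist almost everywhere, so I would instead argue with the limit $\lim_{t\to 0}(z_k\circ\gamma)(t)/t = 0$, which follows from the one-sided sign constraints together with Lipschitz-ness, and conclude that any accumulation point of $\dot\gamma(t)$ as $t\to 0$ — which exists and is causal by closedness of $\mC$ — lies in $T_x\partial\mD$.
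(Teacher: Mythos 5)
Your overall strategy (produce a causal vector at $x$ as a limit of velocities of a causal curve confined to $\mD$, then argue it is tangent to $\partial\mD$) runs into a genuine obstruction at the tangency step, in both of your routes. The claims you rely on --- that $\lim_{t\to 0}(z_k\circ\gamma)(t)/t=0$ for a nonnegative Lipschitz function vanishing at $0$, or that one can pick differentiability times $t_n\to 0$ at which the derivative of the $z_k$-component tends to $0$, or that \emph{some} accumulation point of $\dot\gamma(t)$ as $t\to 0$ is tangent --- are all false in general: the model is $h(t)=|t|$, which is nonnegative, Lipschitz, vanishes at $0$, yet has $h(t)/t=\pm 1$ and derivative $\pm 1$ everywhere near $0$. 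This is not a pathological worry but exactly what happens in examples: take the Minkowski cone $\{v_y\geq |v_x|\}$ restricted to $\mD=\{x\leq 0\}\subset\Rm^2$ and the causal curve $\gamma(t)=(-|t|,t)$ through the boundary point $(0,0)$; every accumulation point of $\dot\gamma(t)$ is $(\pm 1,1)$, and neither is tangent to $\partial\mD=\{x=0\}$. So no single limit of velocities can do the job, and your argument cannot be repaired by a ``careful choice of $t_n$''.

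The missing idea, which is how the paper proceeds, is convexification: one must take the \emph{Clarke differential} $\partial\gamma(0)$, the closed convex hull of all limits $\gamma'(t_n)$ with $t_n\to 0$ differentiability times. Since $f\circ\gamma$ (with $\mD=\{f\leq 0\}$ locally) has a maximum at $t=0$, the generalized Fermat rule gives $0\in df_x(\partial\gamma(0))$, so some \emph{convex combination} $v$ of accumulation points is tangent to the boundary (in the example above, $v=\tfrac12(1,1)+\tfrac12(-1,1)=(0,1)$). This $v$ lies in $\mC(x)$ precisely because $\mC(x)$ is convex --- a hypothesis your argument never uses and which is essential here --- and it is nonzero because, parametrizing $\gamma$ by a local time function $\tau$, every accumulation point $w$ satisfies $d\tau_x\cdot w=1$, hence so does $v$. (Your secondary device of taking boundary points $p_n\to x$ and curves through them is also unnecessary: local solvability applies directly at $x$.)
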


\begin{proof}
Let $f$ be a smooth function at $x$ such that, locally, $\mD=\{f\leq 0\}$,
and let $\tau$ be a local time function with $\tau(x)=0$.
Let $\gamma: ]-\epsilon, \epsilon[\lto M, \epsilon >0$ be a causal curve such that $\gamma(0)=x$,
parametrized by $\tau$ (meaning that $t=\tau \circ \gamma(t)$).
The local solvability of $\mC$ at $x$ implies the existence of such a curve.
We define the Clarke differential $\partial \gamma (0)$ of $\gamma$ at $0$ as the smallest closed convex
set containing the limits $\gamma'(t_n)$ for all sequences $t_n\lto 0$ of differentiability 
points of $\gamma$.
The function  $f\circ \gamma$ has a maximum at $t=0$,
and this implies that $df_x(\partial \gamma(0))$ contains $0$,
hence that $\partial \gamma (0)$ contains a vector $v$ tangent to the boundary.
Since $\tau \circ \gamma(t)=t$, we have $d\tau_x \cdot v=1$, hence $v\neq 0$.
Since $\gamma$ is causal, its Clarke differential is contained in $\mC(x)$,
see \cite[Section 2.2]{BS1}.
\end{proof}

The following result allows to deduce the study of globally hyperbolic cone fields with smooth boundary 
from the study of non-degenerate globally hyperbolic cone fields.

\begin{lem}\label{lem-tube}
	Let $\mC$ be a globally hyperbolic closed cone field with smooth  domain $\mD$. Let $\mC'$
	be an enlargement of $\mC$.
	Then there exists a smooth vector field $X$  contained in $\mC'$ (defined in a neighborhood of $\mD$) and a closed neighborhood with smooth boundary $\mF$ of $\mD$ (contained in the domain of $X$)
	such that $X$ is tangent to the boundaries of $\mD$ and $\mF$.
	
	If $\mC'$ is  hyperbolic, then the cone field $\mC_1$, defined to be equal to $\mC'$ on $\mD$
	and to the half line directed by $X$ on $\mF\setminus \mD$, is globally hyperbolic.
	
	The restriction of $\mC_1$ to the interior $U$ of $\mF$ is globally hyperbolic in $U$.
\end{lem}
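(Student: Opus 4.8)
The idea is to build the vector field $X$ and the collar $\mF$ by a gluing argument that respects the two boundary conditions, and then to verify global hyperbolicity of $\mC_1$ by exhibiting a Cauchy subset. First I would fix a complete metric $g$ on $M$. By the previous lemma, at every boundary point $x$ of $\mD$ there is a non-zero vector in $\mC(x)$ tangent to $\partial \mD$; since $\mC'$ is an enlargement of $\mC$ and is open around $\mC$, a standard partition-of-unity argument (as in \cite[Lemma 2.8]{BS1}) produces a smooth vector field $X_0$, defined in a neighborhood of $\mD$, with $X_0 \subset \mC'$ and $X_0$ tangent to $\partial \mD$ along $\partial \mD$ and non-vanishing there. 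Using a tubular neighborhood of $\partial \mD$ inside $M$ together with the flow of $X_0$, one constructs a smooth hypersurface $\partial \mF$ transverse to $X_0$ away from $\partial \mD$ and asymptotic to an $X_0$-invariant tube, so that $X:=X_0$ is tangent to $\partial \mF$; shrinking if necessary, $\mF$ is a closed neighborhood of $\mD$ with smooth boundary contained in the domain of $X$. This is essentially the smooth-boundary construction already used in the proof of Lemma \ref{lem-solvable}, adapted so that $X$ is additionally tangent to $\partial \mF$.

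Next I would define $\mC_1$ as in the statement: $\mC_1 = \mC'$ on $\mD$, and $\mC_1 =\RR_{>0}\,X$ on $\mF\setminus \mD$. One checks that $\mC_1\cup T^0M$ is closed: away from $\partial\mD$ this is clear, and along $\partial \mD$ one uses that $X\subset \mC'$ together with the fact that $\mC'$ is closed, so limits of the half-lines $\RR_{>0}X$ from outside $\mD$ land in $\mC'(x)\supset \mC(x)$. Thus $\mC_1$ is a closed cone field with domain $\mF$, and it is locally solvable on $\inte\mF$; by Lemma \ref{lem-solvable} (applied with the enlargement $\mC_1$ of itself, or directly using $X$), local solvability on all of $\mF$ can be arranged, so the only real content is hyperbolicity.

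For hyperbolicity of $\mC_1$ I would argue as follows. Any $\mC_1$-causal curve decomposes into arcs inside $\mD$ — which are $\mC'$-causal — and arcs inside $\mF\setminus \mD$, which are integral curves of $X$. Since $\mC'$ is hyperbolic, $\mC'$ is causal, and since $X$ is tangent to $\partial \mD$, a $\mC_1$-causal curve that leaves $\mD$ can re-enter only along $\partial\mD$; combining these with causality of $\mC'$ rules out closed $\mC_1$-causal curves, so $\mC_1$ is causal. For the compactness of diamonds $\mJ^+_{\mC_1}(K)\cap \mJ^-_{\mC_1}(K')$, I would use the limit curve lemma \cite[Proposition 2.14]{BS1}: a sequence of $\mC_1$-causal curves realizing points of such a diamond, parametrized by $g$-arclength, has a limit curve, and the $X$-arcs contribute only bounded length because $X$ is locally bounded and the curves stay in a fixed $\mC'$-diamond of the compacts, which is $\mC'$-precompact by hyperbolicity of $\mC'$. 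The key estimate is that the total length spent in $\mF\setminus\mD$ stays bounded, which follows from the $\mC'$-compactness of the relevant diamond together with the $X$-arcs being monotone transversal pieces of bounded speed. This makes $\mC_1$ globally hyperbolic.

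Finally, the restriction $\mC_1|_U$ to the interior $U=\inte\mF$: it is non-degenerate (its domain is all of $U$) and locally solvable, and hyperbolicity is inherited because $\mJ^{\pm}_{\mC_1|_U}(K)\subset \mJ^{\pm}_{\mC_1}(K)\cap U$ and a compact subset of $M$ intersected with $U$ — no, more carefully, one takes $K, K'$ compact in $U$, notes $\mJ^+_{\mC_1|_U}(K)\cap \mJ^-_{\mC_1|_U}(K')$ is contained in the $\mC_1$-diamond which is compact, and is closed in $U$; since any $\mC_1$-causal curve between points of $U$ staying in the $\mC_1$-diamond in fact stays in $U$ (because near $\partial \mF$ the cone field $\mC_1$ points along $\partial\mF$, so such a curve cannot touch $\partial \mF$ unless an endpoint does), the diamond is actually compact in $U$. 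The main obstacle I anticipate is precisely this last point together with the length bound in the hyperbolicity argument: one must control $\mC_1$-causal curves near $\partial \mF$ and $\partial \mD$, using the tangency of $X$ to both boundaries to prevent escape, and this is where the careful construction of $\mF$ with $X$ tangent to $\partial\mF$ is essential.
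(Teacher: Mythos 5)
There is a genuine gap in the construction of $X$ and $\mF$, which is the heart of the lemma. You fix the vector field first (a field $X_0\subset \mC'$ tangent to $\partial\mD$) and then try to find a hypersurface $\partial\mF$ to which this \emph{given} field is tangent; moreover your description of $\partial\mF$ as ``transverse to $X_0$ away from $\partial\mD$ and asymptotic to an $X_0$-invariant tube, so that $X:=X_0$ is tangent to $\partial\mF$'' is self-contradictory (transverse versus tangent), and even the charitable reading --- find an $X_0$-invariant tube around $\mD$ --- fails in general: a vector field on an annulus can be tangent to the inner circle and spiral strictly outward everywhere else, so that no other invariant circle exists. Since $X$ must be tangent to $\partial\mF$ for $\mC_1$ to be locally solvable there (at a point of $\partial\mF$ where the single causal direction $X$ pointed strictly outward, no nonconstant forward causal curve would exist), this step cannot be left vague. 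The fix is to reverse the order of the construction: first choose a tubular neighborhood $W\approx B\times\,]-1,1[$ of $B=\partial\mD$ whose leaves $B\times\{t\}$ are transverse to two smooth fields $X^{\pm}\subset\mC'$ pointing respectively strictly inward and strictly outward, and \emph{then} define $X:=fX^-+(1-f)X^+$ with $f$ chosen so that $X$ is tangent to every leaf; convexity of the cones $\mC'(x)$ keeps $X$ inside $\mC'$, and tangency to $\partial\mD=B\times\{0\}$ and to $\partial\mF=B\times\{1/2\}$ (with $\mF:=\mD\cup(B\times[0,1/2])$) is automatic.

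The remainder of your argument is essentially sound but can be streamlined. Hyperbolicity of $\mC_1$ needs no limit-curve or length estimate: $\mC_1$ is a closed cone field contained in the hyperbolic cone field $\mC'$, so it is causal and its diamonds are closed subsets of the compact $\mC'$-diamonds, hence compact. Your treatment of the restriction to $U$ is the right idea; with the leafwise construction above it becomes precise, since on $\mF\setminus\mD$ causal curves are forced to stay in a single leaf $B\times\{t\}$, so $\mJ^{\pm}_{\mC_1|U}(K)=\mJ^{\pm}_{\mC_1}(K)$ for $K$ compact in $U$ and the diamond is a compact subset of $U$.
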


\begin{proof}
Let $B$ be the boundary of the domain $\mD$.
By assumption it is a closed submanifold of $M$.
The enlargement $\mC'$ contains two smooth vector fields $X^{\pm}$ defined in a neighborhood of $\mD$ and pointing 
respectively strictly to the interior and strictly to the exterior of $\mD$.
Then, there exists a tubular neighborhood $W\approx B\times ]-1,1[$ of $B$, which is contained in the 
common domain of $X^{\pm}$, such that the leaves $B\times \{t\}$ are  transverse to $X^+$ and $X^-$,
and such that $B\times [-t,t]$ is closed in $M$ for each $t\in [0,1[$. There exists a smooth positive function $f:W\lto [0,1]$ such that
the vector field $X:= fX^-+(1-f)X^+$ is tangent to the leaves of $W$, it is obviously contained in $\mC'$.
We take $F:= \mD \cup (B\times [0, 1/2]) $, so that $U=\mD \cup (B\times [0, 1/2[)$.

Since the closed cone field $\mC_1$ is contained in the hyperbolic cone field $\mC'$, it is hyperbolic.
Moreover, it is locally solvable, hence globally hyperbolic.

Finally, if $K$ and $K'$ are compact subsets of $U$, then 
$
\mJ^+_{\mC_1|U}(K)\cap \mJ^-_{\mC_1|U}(K')=
\mJ^+_{\mC_1}(K)\cap \mJ^-_{\mC_1}(K')
$
is a compact subset of $U$, which implies the hyperbolicity of $\mC_{1|U}$.
\end{proof}

\begin{proof}[Proof of Corollary \ref{cor-prod}.]
We consider a globally hyperbolic enlargement $\mC'$ of $\mC$
such that $\tau$ is a Cauchy temporal function for $\mC'$, and apply Lemma \ref{lem-tube}.
Each integral curve of $X$ on $U$ intersects $N=\{x\in U : \tau(x)=0\}$ exactly once, and this defines
the map $\pi$. 
\end{proof}

\begin{proof}[Proof of Theorem \ref{thm-two-unif}.]
We consider a globally hyperbolic enlargement $\mC'$ of $\mC$ such that $H_0$ and $H_1$ are spatial for $\mC'$.
We then apply Lemma \ref{lem-tube}.
Since $\mC_{|U}$ is globally hyperbolic and non-degenerate on $U$, we can apply 
 Proposition \ref{prop-two-unif} to this cone field. We obtain a temporal function $\theta: U\lto \Rm$
which is uniform with respect to a complete metric $h$ on $U$. We now take a complete metric $g$ on $M$ 
which is equal to $h$ near $\mD$, and a smooth function $\tau$ on $M$ which is equal to $\theta$ near 
$\mD$.
\end{proof}

\bibliographystyle{amsplain}

\end{document}